\documentclass[11pt]{article}
\usepackage{amsmath}
\usepackage{amsfonts} \makeatother
\usepackage{amsfonts,amsmath,amssymb,mathrsfs}
\usepackage{cite}
\usepackage{amsthm}
\usepackage[colorlinks,bookmarksopen,bookmarksnumbered,citecolor=red, linkcolor=red, urlcolor=red]{hyperref}

\numberwithin{equation}{section}
\newtheorem{theorem}{Theorem}[section]
\newtheorem{lemma}[theorem]{Lemma}
\newtheorem{proposition}[theorem]{Proposition}
\newtheorem{remark}{Remark}[section]

\newtheorem{corollary}[theorem]{Corollary}

\newcommand{\be}{\begin{equation}}
\newcommand{\ee}{\end{equation}}
\newcommand\bes{\begin{eqnarray}}
\newcommand\ees{\end{eqnarray}}
\newcommand{\bess}{\begin{eqnarray*}}
\newcommand{\eess}{\end{eqnarray*}}

\newcommand\eps{\varepsilon}

\def\R{\mathbb{R}}
\def\N{\mathbb{N}}
\def\M{\mathcal{M}}

\setlength{\textwidth}{6.5truein} \setlength{\textheight}{9.3truein}
\setlength{\oddsidemargin}{-0.0in}
\setlength{\evensidemargin}{-0.0in}
\setlength{\topmargin}{-0.4truein}

\begin{document}

\begin{center} {\bf\Large Ground states of planar Schr\"{o}dinger-Poisson systems

                  with an unbounded potential
\footnote{This work was supported by the National Natural Science Foundation of China (No. 11901284),
the Natural Science Foundation of Jiangsu Province (No. BK20180814),
the China Postdoctoral Science Foundation funded project (No. 2020M671531).}
}\\[4mm]
 { \large Miao Du$^{\textrm{a},*}$, \ \ Jiaxin Xu$^{\textrm{a}}$}\\[4mm]
{\small
$^{\textrm{a}}$  School of Applied Mathematics, Nanjing University of Finance and Economics,

     Nanjing 210023, People's Republic of China
}
\renewcommand{\thefootnote}{}
\footnote{\hspace{-2ex}$^{\ast}$ Corresponding author}
\footnote{\hspace{-2ex}\emph {~E-mail addresses:}
 dumiaomath@163.com, xujiaxin991226@163.com.}
\end{center}

\setlength{\baselineskip}{16pt}

\begin{quote}
  \noindent {\bf Abstract:} In this paper, we deal with a class of planar Schr\"{o}dinger-Poisson systems, namely,
  $-\Delta u+V(x)u+\frac{\gamma}{2\pi}\bigl(\log(|\cdot|)\ast|u|^{2}\bigr)u=b|u|^{p-2}u\ \text{in}\ \mathbb{R}^{2}$,
  where $\gamma > 0$, $b \geq 0$, $p>2$ and $V \in C(\mathbb{R}^2, \mathbb{R})$ is an unbounded potential function
  with $\inf_{\mathbb{R}^2} V >0$. Suppose moreover that the potential $V$ satisfies
  $\left|\{x \in \mathbb{R}^2:\: V(x)\leq M\}\right| < \infty$ for every $M>0$,
  we establish the existence of ground state solutions for this system via variational methods.
  Furthermore, we also explore the minimax characterization of ground state solutions.
  Our main results can be viewed as a counterpart of the results in \cite{Molle-Sardilli-2022},
  where the authors studied the existence of ground state solutions for the above
  planar Schr\"{o}dinger-Poisson system in the case where $b>0$ and $p >4$.

  \noindent {\bf MSC}: 35J50; 35Q40

  \noindent {\bf Keywords}: {Planar Schr\"{o}dinger-Poisson system;
    unbounded potential; ground state solution;
   variational method}
\end{quote}

\section{Introduction}

$ $ \indent
The present paper is devoted to the study of Schr\"{o}dinger-Poisson systems of the type
\begin{equation}\label{eq 1.1}
   \begin{cases}
      i\psi_{t}- \Delta \psi + W(x) \psi + \gamma \phi \psi = b |\psi|^{p-2}\psi
        \ & \text{in}\ \mathbb{R}^{d}\times \mathbb{R},\\
      \Delta \phi= |\psi|^{2}   & \text{in}\ \mathbb{R}^{d}\times \mathbb{R},
   \end{cases}
\end{equation}
where $d \geq 2$,  $\psi: \R^d \times \R \rightarrow \mathbb{C}$ is the time-dependent
wave function, $W: \R^d \rightarrow \R$ is a real external potential,
$\gamma \in \R$ is the coupling coefficient, $b \geq 0$  and $2<p<2^{*}$.
Here, $2^{\ast}$ is the so-called critical Sobolev exponent,
that is, $2^{\ast}=\frac{2d}{d-2}$ if $d \geq 3$ and $2^{\ast} = \infty$ if $d=1$ or $2$.
The function $\phi$ represents an internal potential for a nonlocal self-interaction
of the wave function $\psi$, and the local nonlinear term $b |\psi|^{p-2}\psi$
describes the interaction effect among particles.
System \eqref{eq 1.1} appears in many important physical problems,
such as quantum mechanics (see e.g. \cite{Benguria-1981,Catto-Lions-1993})
and semiconductor theory (see e.g. \cite{Lions-1987,Markowich-1990}).
For more physical interpretations of \eqref{eq 1.1}, we refer the reader
e.g. to \cite{Benci-Fortunato-1998,Mauser-2001}.

One of the most interesting questions about \eqref{eq 1.1} is the existence
of standing (or solitary) wave solutions. By the usual standing wave ansatz
$\psi(x,t)=e^{-i\lambda t}u(x)$, $\lambda\in \R$, problem \eqref{eq 1.1}
is reduced to the following stationary Schr\"{o}dinger-Poisson system:
\begin{equation}\label{eq 1.2}
  \begin{cases}
    -\Delta u + V(x) u + \gamma \phi u = b|u|^{p-2} u \ &\text{in}\ \R^{d},\\
    \Delta \phi= u^{2}   &\text{in}\ \R^{d},
  \end{cases}
\end{equation}
where $V(x)=W(x)+\lambda$. The second equation in \eqref{eq 1.2} determines
$\phi: \mathbb{R}^{d}\rightarrow\mathbb{R}$ only up to harmonic functions.
It is natural to choose $\phi$ as the negative Newton potential of $u^{2}$,
that is, the convolution of $u^{2}$ with the fundamental solution $\Phi_{d}$
of the Laplacian, which is given by
\begin{equation*}
  \Phi_{d}(x)=\frac{1}{2 \pi} \log |x| \quad \text{if} \ d=2
    \qquad \text{and} \qquad
  \Phi_{d}(x)= \frac{1}{d(2-d)\omega_{d}}|x|^{2-d} \quad  \text{if} \ d \geq 3.
\end{equation*}
Here $\omega_{d}$ denotes the volume of the unit ball in $\R^{d}$.
With such a formal inversion of the second equation in \eqref{eq 1.2},
we can derive the integro-differential equation
\begin{equation}\label{eq 1.3}
   -\Delta u + V(x) u + \gamma \left(\Phi_{d}\ast|u|^{2}\right)u  = b|u|^{p-2}u
   \quad \text{in}\ \R^d.
\end{equation}
It is worth noting that, at least formally, \eqref{eq 1.3} has a variational
structure with the associated energy functional
\begin{equation*}
   I_d (u) := \frac{1}{2}\int_{\R^d}\left(|\nabla u|^{2}
     + V(x) u^{2}\right)dx + \frac{\gamma}{4}\int_{\R^d}\int_{\R^d}
     \Phi_d\left(|x-y|\right)u^{2}(x)u^2(y)\,dxdy - \frac{b}{p}\int_{\R^d} |u|^p\,dx.
\end{equation*}
In the case $d \geq 3$, the energy functional $I_d$ is well-defined and of class $C^1$ in $H^1 (\R^d)$,
provided that $V \in L^\infty (\R^d)$. The majority of the literature is centered on the study
of \eqref{eq 1.3} with $d=3$ and $\gamma<0$. In this case, equation \eqref{eq 1.3} and its generalizations
have been extensively investigated over the past few decades, and existence, nonexistence
and multiplicity results of solutions have been obtained via variational methods,
see e.g. \cite{Ambrosetti-Ruiz-2008,Azzollini-Pomponio-2008,Bellazzini-Jeanjean-Luo-2008,
Cerami-2010,DAprile-2004-1,DAprile-2005,Ruiz-2006,Wangjun2012,Wang-Zhou-2007,Zhaoleiga-2013,Zhaoleiga-2008}
and the references therein.

Compared with the case $d \geq 3$, the logarithmic integral kernel
$\Phi_2(x)=\frac{1}{2 \pi} \log |x|$ is sign-changing and presents singularities as
$|x|$ goes to zero and infinity, and therefore the associated energy functional $I_2$
seems much more delicate. In particular, the functional $I_2$ is not well-defined
on the natural Hilbert space $H^{1}(\R^2)$ even if $V \in L^\infty(\R^2)$ and $\inf_{\R^2} V>0$
due to the appearance of the convolution term
\begin{equation*}
  \int_{\mathbb{R}^{2}} \int_{\mathbb{R}^{2}}\log \left(|x-y|\right)u^{2}(x)u^2(y)\,dxdy.
\end{equation*}
Consequently, variational approaches for the higher-dimensional case $d \geq 3$ cannot be directly
adapted to the planar case $d=2$. So the rigorous study of the planar Schr\"{o}dinger-Poisson system
remained open for a long time. This is one of the key reasons why much less is known
in the planar case in which \eqref{eq 1.3} becomes
\begin{equation}\label{eq 1.4}
  -\Delta u+ V(x)u + \frac{\gamma}{2 \pi}\left(\,\log\left(|\cdot|\right)\ast|u|^{2}\,\right)u
   = b |u|^{p-2}u  \quad \text{in}\ \R^{2}.
\end{equation}
In \cite{Stubbe-2008}, Stubbe introduced the smaller Hilbert space
\begin{equation*}
    X:=\left\{u \in H^{1}(\mathbb{R}^{2}):\: \int_{\mathbb{R}^{2}}
    \log\left(1+|x|\right)u^{2}\,dx < \infty \right\}
\end{equation*}
endowed with the norm
\begin{equation*}
  \|u\|_X := \left(\int_{\R^2} \left(|\nabla u|^2 + \left[1 + \log \bigl(1 + |x| \bigl)\right]
   u^2 \right)dx\right)^{1/2},
\end{equation*}
which guarantees that the associated energy functional is well-defined and of class $C^1$ on $X$.
When $ V(x) \equiv  \lambda \geq 0$, $\gamma>0$ and $b=0$ in \eqref{eq 1.4}, Stubbe employed
strict rearrangement inequalities to prove that equation \eqref{eq 1.4} has a unique ground state solution
which is a positive spherically symmetric decreasing function. One of the main contributions
in \cite{Stubbe-2008} is to set a suitable workspace for the planar Schr\"{o}dinger-Poisson system,
but some difficulties arise from the following unpleasant facts. First, while the functional
$I_2$ is invariant under every translation which leaves the external potential $V$ invariant,
the norm of $X$ is not translation invariant. Second, the quadratic part of $I_2$ is not coercive on $X$.
These difficulties have been successfully overcome by Cingolani and Weth in \cite{Cingolani-Weth-2016} where
they developed some new ideas and estimates within the underlying space $X$. Moreover, based on
a surprisingly strong compactness condition (modulo translation) for Cerami sequences,
they also detected the various existence results of solutions for \eqref{eq 1.4} with $\gamma>0$,
$b\geq0$ and $p\geq4$ in a periodic setting. Successively, Weth and the first author \cite{Du-Weth-2017}
extended the above results to the rest case $2<p<4$ by exploring the more complicated
underlying functional geometry with a different variational approach.
When $\gamma>0$ and $b=0$, equation \eqref{eq 1.4} is also referred to as the logarithmic Choquard equation
and can be deduced from the Schr\"{o}dinger-Newton equation \cite{Penrose-1996}.
In \cite{Cingolani-Weth-2016}, it turned out that the logarithmic Choquard equation
has a unique (up to translation) positive solution if $V$ is a positive constant.
Further,  Bonheure,  Cingolani and Van Schaftingen \cite{Bonheure-2017} showed
the sharp asymptotics and nondegeneracy of this unique positive solution.
For more related works on the planar Schr\"{o}dinger-Poisson system,
see e.g. \cite{Albuquerque-2021,Alves-Figueiredo-2019,Azzollini-2021,Cassani-2017,
Chen-2020-2,Chen-2020,Chen-2019,Cingolani-Jeanjean-2019,Liu-Zhang-2022,Liu-Zhang-2023-1}.

In all the works mentioned above for the planar Schr\"{o}dinger-Poisson system, we emphasize that
the recovering of compactness relies heavily on the periodicity or symmetry assumption on the potential $V$.
Very recently,  Liu, R\u{a}dulescu and Zhang \cite{Liu-Zhang-2023} studied the existence of positive
ground state solutions for \eqref{eq 1.4} when $\gamma>0$, $V \in C(\R^2, \R)$ exhibits a finite potential well,
and $b|u|^{p-2}u$ is replaced by $f(u)$ which is required to have either a subcritical or a critical
exponential growth in the sense of Trudinger-Moser. A main feature of \cite{Liu-Zhang-2023}
is that neither the finite potential well nor the reaction satisfies any symmetry or periodicity hypotheses.
Meanwhile, Molle and Sardilli \cite{Molle-Sardilli-2022} considered \eqref{eq 1.4} with $\gamma >0$,
$b >0$ and $p \geq 4$ in a nonperiodic and nonsymmetric setting, where the potential $V$ satisfies
\begin{itemize}
  \item [$(V_0)$] $V \in L_{loc}^1(\R^2)$, $V_0:= \inf_{\R^2} V > 0$ and
    $\left|\{x \in \R^2:\: V(x)\leq M\}\right| < \infty$ for every $M>0$.
\end{itemize}
Note that, the functional $I_2$ is not well-defined on $X$ in virtue of $(V_0)$, so that the natural workspace
in \cite{Molle-Sardilli-2022} is the weighted Hilbert space
\begin{equation*}
  E := \left\{u \in X :\: \int_{\mathbb{R}^{2}}V(x)u^2\,dx < \infty \right\}
\end{equation*}
equipped with the norm
\begin{equation*}
  \|u\|_E := \left(\int_{\R^2}\left(|\nabla u|^2 + V(x)u^2 + \log(1 + |x|)u^2\right)\,dx\right)^{1/2}.
\end{equation*}
With the aid of $(V_0)$, the authors established the Cerami compactness condition, and then obtained
the existence of positive ground state solutions for \eqref{eq 1.4} by using a variant of
the mountain pass theorem. Let us remark that, the assumption $ p > 4$ in \cite{Molle-Sardilli-2022}
is necessary to ensure that the mountain pass energy coincides with the ground state energy
by applying the fibering technique. A natural question arises as to whether there exist
ground state solutions for \eqref{eq 1.4} in the case where $2 < p \leq 4$.
To the best of our knowledge, no existence results for \eqref{eq 1.4} have been available for this case.
This gap of information is unpleasant not only from a mathematical point of view, but also since
the case $p=3$ is relevant in 2-dimensional quantum mechanical models, see \cite[p. 761]{Mauser-2001}.

In the present work, we focus on \eqref{eq 1.4} with $\gamma>0$, and by rescaling
we may assume that $\gamma = 1$. More precisely, we are dealing with
system \eqref{eq 1.2}, the associated scalar equation
\begin{equation}\label{eq 1.5}
  - \Delta u + V(x)u + \frac{1}{2\pi} \left(\log\left(|\cdot|\right)\ast|u|^{2}\right)u
   = b|u|^{p-2}u  \quad  \text{in}\ \R^{2}
\end{equation}
and the associated energy functional $I:\: E \to \R$ defined by
\begin{equation}\label{eq 1.6}
 I(u)=\frac{1}{2}\int_{\mathbb{R}^{2}}\left(|\nabla u|^{2}
     +V(x)u^{2}\right)dx + \frac{1}{8 \pi}\int_{\mathbb{R}^{2}}
     \int_{\mathbb{R}^{2}}\log \left(|x-y|\right)u^{2}(x)u^2(y)\,dxdy
     -\frac{b}{p} \int_{\mathbb{R}^{2}}|u|^p \,dx.
\end{equation}
In the sequel, by a solution of \eqref{eq 1.5} we always mean a weak solution,
that is, a critical point of $I$. A nontrivial solution $u$ of \eqref{eq 1.5} is
called a ground state solution if $I(u) \leq I(w)$ for every nontrivial solution
$w$ of \eqref{eq 1.5}.
The main purpose of this paper is to provide a counterpart
of the results in \cite{Molle-Sardilli-2022} in the case where $2 < p \leq4$.
In particular, we shall establish the existence of ground state solutions
for \eqref{eq 1.5} in this case. Furthermore, we also explore the minimax
characterization of ground state solutions for \eqref{eq 1.5} in the case where $p \geq 3$.

Our first main result is concerned with the existence of ground state solutions
for \eqref{eq 1.5} in the case where $p \geq 4$. For this we define the Nehari manifold
associated to the functional $I$ as
\begin{equation}\label{eq 1.7}
  \mathcal{N} = \left\{ u \in E \backslash \{0\}: \: I'(u) u = 0\right\}.
\end{equation}

\begin{theorem}\label{th 1.1}
Suppose that $b \geq 0$, $p \geq 4$, and that $(V_0)$ holds. Then, the restriction
of $I$ to $\mathcal{N}$ attains a global minimum, and every minimizer $\bar{u}$
of $I|_{\mathcal{N}}$ is a ground state solution of \eqref{eq 1.5} which obeys
the minimax characterization
\begin{equation*}
  I(\bar{u}) = \inf_{u \in E \backslash \{0\}} \sup_{t>0} I(tu).
\end{equation*}
Moreover, if $V$ is locally H\"{o}lder continuous, then $\bar{u} \in C^2(\R^2)$
is a classical solution of \eqref{eq 1.5} that does not change sign.
\end{theorem}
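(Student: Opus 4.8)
The plan is to run a constrained minimization on the Nehari manifold $\mathcal{N}$ and to read off the minimax value from the fibering maps. Throughout write $A(u)=\int_{\R^2}(|\nabla u|^2+V(x)u^2)\,dx$ and $N(u)=\int_{\R^2}\int_{\R^2}\log(|x-y|)u^2(x)u^2(y)\,dx\,dy$, and split $N(u)=B_1(u)-B_2(u)$ along $\log(|x-y|)=\log(1+|x-y|)-\log\!\big(1+|x-y|^{-1}\big)$, where $B_1(u)=\int\int\log(1+|x-y|)u^2(x)u^2(y)$ and $B_2(u)=\int\int\log\!\big(1+|x-y|^{-1}\big)u^2(x)u^2(y)$; recall $B_1\ge 0$ and $B_2(u)\le C\|u\|_{8/3}^4$. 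For fixed $u\in E\setminus\{0\}$ I first analyze $g(t):=I(tu)=\tfrac{t^2}{2}A(u)+\tfrac{t^4}{8\pi}N(u)-\tfrac{b t^p}{p}\|u\|_p^p$. Factoring $g'(t)=t\,\phi(t)$ with $\phi(t)=A(u)+\tfrac{t^2}{2\pi}N(u)-b t^{p-2}\|u\|_p^p$, a short case check on the sign of $N(u)$ (using $p\ge 4$, so the $t^{p-2}$ term is convex and dominant) shows $\phi$ has at most one zero on $(0,\infty)$; when it exists this zero $t(u)$ is the unique critical point of $g$ and a strict global maximum, and it exists precisely when $\sup_{t>0}g(t)<\infty$. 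Projecting each admissible direction onto $\mathcal{N}$ this way, and noting that along a direction already lying on $\mathcal{N}$ the maximizer is at $t=1$, yields $\inf_{u\in E\setminus\{0\}}\sup_{t>0}I(tu)=\inf_{\mathcal{N}}I=:c$, which is the minimax identity.

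The next step is the algebraic identity that drives everything. On $\mathcal{N}$ one has $I'(u)u=0$, and subtracting $\tfrac14 I'(u)u$ from $I(u)$ cancels the indefinite term $N(u)$, leaving
\[
I(u)=\tfrac14 A(u)+\tfrac{p-4}{4p}\,b\,\|u\|_p^p\qquad(u\in\mathcal{N}).
\]
Since $p\ge 4$ both summands are nonnegative, so $I|_{\mathcal{N}}\ge \tfrac14 A\ge 0$. Feeding this into the constraint $A(u)=b\|u\|_p^p-\tfrac1{2\pi}N(u)\le b\|u\|_p^p+\tfrac1{2\pi}B_2(u)$ and bounding the right-hand side through $\|u\|_{H^1}$ (hence by $A(u)^{p/2}+A(u)^2$ via Sobolev, using $A(u)\ge\min(1,V_0)\|u\|_{H^1}^2$) forces $A(u)\ge\delta>0$ on $\mathcal{N}$; therefore $c>0$.

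This is where the only real difficulty lies. A minimizing sequence $(u_n)\subset\mathcal{N}$ has $A(u_n)$ bounded, so it is bounded in the potential space, and by the compactness furnished by $(V_0)$ (the Bartsch--Wang embedding $\{A(\cdot)<\infty\}\hookrightarrow L^q(\R^2)$, $2\le q<\infty$, is compact) we get $u_n\to\bar u$ in every $L^q$ and a.e., along a subsequence. The obstacle is that $A(u_n)$ controls neither $\int\log(1+|x|)u_n^2$ nor boundedness of $(u_n)$ in the full norm of $E$, and recovering that control is the heart of the argument. I first rule out $\bar u=0$: if $u_n\to 0$ in $L^q$ then $\|u_n\|_p\to0$ and $B_2(u_n)\to 0$, whence the constraint together with $B_1\ge0$ gives $A(u_n)\to0$ and $I(u_n)\to0$, contradicting $c>0$. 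With $\bar u\ne 0$ secured, fix a ball $B=B_R(0)$ carrying a definite share $\int_B u_n^2\ge\delta>0$; the elementary bound $\log(1+|x-y|)\ge c\log(1+|x|)-c'$ for $y\in B,\ |x|\ge 2R$ then yields $\int\log(1+|x|)u_n^2\le C\,B_1(u_n)+C'\|u_n\|_2^2$, and $B_1(u_n)$ is bounded because $\tfrac1{8\pi}B_1(u_n)=I(u_n)-\tfrac12A(u_n)+\tfrac1{8\pi}B_2(u_n)+\tfrac bp\|u_n\|_p^p$ has all terms bounded. Thus $(u_n)$ is bounded in $E$ after all, $\bar u\in E$, and $u_n\rightharpoonup\bar u$ in $E$. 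To finish the minimization I use the fibering once more: for each fixed $t>0$, lower semicontinuity of $A$ and (via Fatou and the positive kernel) of $B_1$, plus strong convergence of the $B_2$- and $L^p$-terms, gives $I(t\bar u)\le\liminf_n I(tu_n)\le\liminf_n I(u_n)=c$; hence $\sup_{t>0}I(t\bar u)\le c<\infty$, so $\bar u$ is admissible, $t(\bar u)\bar u\in\mathcal{N}$, and $c\le I(t(\bar u)\bar u)=\sup_{t}I(t\bar u)\le c$, so the infimum is attained. For any minimizer $\bar u$, a direct computation identifies $J'(\bar u)\bar u$ (with $J(u)=I'(u)u$) with $g''(1)<0$ for the fiber through $\bar u$, since $t=1$ is the nondegenerate maximum above; the Lagrange rule gives $I'(\bar u)=\lambda J'(\bar u)$, and testing with $\bar u$ (using $I'(\bar u)\bar u=0$, $J'(\bar u)\bar u\ne0$) forces $\lambda=0$, so $I'(\bar u)=0$. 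As every nontrivial solution lies on $\mathcal{N}$ and thus has energy $\ge c$, each minimizer is a ground state, and combined with $c=\inf_u\sup_t I(tu)$ this is the stated characterization.

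For the qualitative claims, note $A(|u|)=A(u)$, $N(|u|)=N(u)$ and $\||u|\|_p=\|u\|_p$, so $|\bar u|\in\mathcal{N}$ is again a minimizer, hence a solution. Assuming $V$ locally H\"older, a standard bootstrap applies: the convolution potential $\tfrac1{2\pi}(\log(|\cdot|)\ast\bar u^2)$ is continuous and locally bounded (its far field controlled by $\int\log(1+|x|)\bar u^2<\infty$, its local singularity by the integrability of $\log(1+|x-y|^{-1})$), so writing the equation as $-\Delta\bar u=f$ with $f\in L^q_{loc}$ for all $q<\infty$ gives $\bar u\in W^{2,q}_{loc}\subset C^{1,\alpha}_{loc}$, and Schauder estimates upgrade this to $\bar u\in C^2(\R^2)$. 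Applying this to $|\bar u|\ge 0$, which solves $-\Delta|\bar u|+c(x)|\bar u|=0$ with $c\in L^\infty_{loc}$, the strong maximum principle gives $|\bar u|>0$ everywhere; hence $\bar u$ never vanishes and, being continuous on the connected set $\R^2$, does not change sign. As indicated, the main obstacle is recovering the logarithmically weighted bound needed for $E$-boundedness and weak convergence in $E$; once that is in place the remaining steps are routine.
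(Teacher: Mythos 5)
Your proposal is correct and runs along the paper's own route: constrained minimization on the Nehari manifold, the fibering analysis of Lemma \ref{lem 3.1} and the minimax identity of Corollary \ref{coro 3.2}, positivity of the level as in Lemma \ref{lem 3.3}, the Bartsch--Wang compact embedding with exclusion of the zero weak limit, and the $|u|$-trick with bootstrap and strong maximum principle as in Lemma \ref{lem 3.5}. Three sub-steps are executed by different but equivalent means, and the comparison is instructive. First, where the paper recovers the log-weighted bound by citing Lemma \ref{lem 2.3} (from Cingolani--Weth), you reprove exactly the diagonal case needed: with $\bar u \neq 0$ furnishing a ball $B_R$ with $\int_{B_R} u_n^2 \geq \delta$, the elementary bound $\log(1+|x-y|) \geq \log(1+|x|) - \log 2$ for $y \in B_R$, $|x|\geq 2R$ converts boundedness of $N_1(u_n)$ into boundedness of $|u_n|_*$; this is precisely the mechanism inside the cited lemma, so your argument is self-contained at no extra cost. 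Second, for attainment the paper distinguishes the cases \eqref{eq 3.6}/\eqref{eq 3.7} and kills the strict case by projecting to $tu \in \mathcal N$ with $t \in (0,1)$, thereby attaining the minimum at $\bar u$ itself; you instead exploit that each $u_n \in \mathcal N$ maximizes its own fiber, pass weak lower semicontinuity through each fixed $t$ to get $\sup_{t>0} I(t\bar u) \leq m < \infty$, and attain at the projection $t(\bar u)\bar u$ --- equally valid, since the theorem asserts attainment plus properties of arbitrary minimizers. Third, for ``minimizers are critical points'' you apply the Lagrange rule using $J'(\bar u)\bar u = g''(1) = -2\|\bar u\|_V^2 + (4-p)b|\bar u|_p^p < 0$ with $J(u)=I'(u)u$, while the paper runs the implicit function theorem on $\varphi(s,t)$; note that its transversality condition $\frac{\partial \varphi}{\partial t}(0,1)$ is literally the same quantity, so the two arguments coincide in substance. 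The only point left implicit in your write-up --- shared, to be fair, with the paper's Corollary \ref{coro 3.2} --- is nonemptiness of $\mathcal N$ when $b=0$ or $p=4$, where condition \eqref{eq 3.2} must be realized by some direction; one line suffices, e.g.\ any $u$ supported in a ball of radius less than $1/2$ has $N_0(u)<0$.
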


We remark that, Theorem \ref{th 1.1} shows that \eqref{eq 1.5} has a ground
state solution in the case where $p \geq 4$.
This seems to be the first existence result of ground state solutions
for \eqref{eq 1.5} in the special case $b=0$ or $p=4$. Moreover,
we also show that every ground state solution of \eqref{eq 1.5}
obeys a simple minimax characterization. The proof of Theorem \ref{th 1.1}
is based on the method of Nehari manifold as e.g.
in \cite{Badiale-Serra,Rabinowitz-1992,Szulkin-Weth,Willem-1996},
and it is somewhat standard. Here we merely wish to point out that,
the crucial ingredient of the proof is the fact that the weak limit of Cerami sequences
in the space $H_V$ defined by
\begin{equation*}
  H_V = \left\{u \in H^{1}(\R^2) :\: \int_{\R^2} V(x)u^{2}\,dx < \infty \right\}
\end{equation*}
is not equal to zero, and this fact can be verified by noticing that $H_V$
is compactly embedded into $L^s(\R^2)$ for all $s \geq 2$ (see \cite{Bartsch-Wang-1995}).

Nevertheless, it is not difficult to find that Theorem \ref{th 1.1} fails to hold
in the case where $2<p<4$, see \eqref{eq 3.1} and Lemma \ref{lem 3.1} below.
The key difficulty in this case is the competing nature
of the local and nonlocal superquadratic terms in the functional $I$.
In particular, we note that the nonlinearity $u \mapsto f(u):=|u|^{p-2}u$ with $2<p<4$
does not satisfy the Ambrosetti-Rabinowitz type condition
\begin{equation*}
  0<\mu\int_{0}^{u} f(s)\,ds \leq f(u)u \qquad \text{for all $u\neq0$ with some $\mu>4$},
\end{equation*}
which would mean that Palais-Smale sequences or Cerami sequences are bounded in $H_V$.
Moreover, the fact that the function $f(s)/|s|^{3}$ is not increasing on $(-\infty, 0)$ and $(0, \infty)$
prevents us from using the method of Nehari manifold. It follows that
the boundedness of Palais-Smale sequences or Cerami sequences
becomes a major difficulty in the case where $2<p<4$.

Our second main result is concerned with the existence of ground state solutions for \eqref{eq 1.5}
in the case where $2 < p < 4$. In this case, except for $(V_0)$, we need to add the following condition:
\begin{itemize}
  \item [$(V_1)$] $V \in C(\R^2, \,\R)$ is weakly differentiable, and there exist $\alpha, \beta>0$ such that,
  for a.e. $x \in \R^2$,
  \begin{equation*}
    \left|(\nabla V(x), x)\right| \leq \alpha V(x) \qquad \text{and} \qquad 2V(x)+ (\nabla V(x), x) \geq -\beta.
  \end{equation*}
\end{itemize}
This condition is used to obtain a bounded Palais-Smale sequence for the functional $I$.

\begin{theorem}\label{th 1.2}
Suppose that $b >0$, $ 2< p <4$, and that $(V_0)$ and $(V_1)$ hold. Then \eqref{eq 1.5}
has a ground state solution in $E$.
\end{theorem}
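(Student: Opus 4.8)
The plan is to produce a nontrivial solution of \eqref{eq 1.5} by a mountain--pass scheme combined with Jeanjean's monotonicity trick, and then to extract a ground state by minimizing $I$ over the set of all nontrivial solutions. I abbreviate $\|u\|_0^2=\int_{\R^2}(|\nabla u|^2+V(x)u^2)\,dx$, $D(u)=\int_{\R^2}\int_{\R^2}\log(|x-y|)u^2(x)u^2(y)\,dx\,dy$ and $L(u)=\int_{\R^2}|u|^p\,dx$. Since $2<p<4$ rules out both the Ambrosetti--Rabinowitz condition and the Nehari manifold, the whole difficulty lies in the boundedness of the relevant sequences, and $(V_1)$ is tailored to resolve it through the Pohozaev identity. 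Using the estimates for the logarithmic kernel that underlie the space $E$, I would first check that $I$ possesses the mountain--pass geometry: it is positive on a small sphere of $E$, while along the dilation $u_0(\cdot/\tau)$ with $\tau$ small one has $D(u_0(\cdot/\tau))<0$, so that the quartic term forces $I\to-\infty$ along a ray. To recover compactness I would embed the problem in the family $I_\lambda(u)=\tfrac12\|u\|_0^2+\tfrac1{8\pi}D(u)-\lambda\tfrac bp L(u)$, $\lambda\in[\delta,1]$, with mountain--pass geometry uniform in $\lambda$; Jeanjean's trick then yields, for almost every $\lambda$, a bounded Palais--Smale sequence for $I_\lambda$ and hence (via the compactness below) a nontrivial critical point $u_\lambda$.

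The engine of the proof is the Pohozaev identity. Differentiating $t\mapsto I_\lambda(u(\cdot/t))$ at $t=1$ gives the Pohozaev functional
\[
P_\lambda(u)=\int_{\R^2}\Big(V(x)+\tfrac12(\nabla V(x),x)\Big)u^2\,dx+\frac1{8\pi}\Big(\int_{\R^2}u^2\,dx\Big)^2+\frac1{2\pi}D(u)-\frac{2\lambda b}{p}L(u),
\]
and each critical point satisfies $P_\lambda(u_\lambda)=0$. Writing $Q(u)=\int_{\R^2}\big(V+\tfrac12(\nabla V,x)\big)u^2\,dx$ and combining $I_\lambda(u_\lambda)=c_\lambda$ with $I_\lambda'(u_\lambda)u_\lambda=0$ produces the Nehari--type identity $\|u_\lambda\|_0^2=4c_\lambda+\tfrac{(4-p)\lambda b}{p}L(u_\lambda)$, while eliminating $D(u_\lambda)$ between $I_\lambda'(u_\lambda)u_\lambda=0$ and $P_\lambda(u_\lambda)=0$ yields the second, independent relation $\tfrac{2b(p-3)}{p}L(u_\lambda)=4c_\lambda-Q(u_\lambda)-\tfrac1{8\pi}\big(\int_{\R^2}u_\lambda^2\big)^2$. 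Here $(V_1)$ enters decisively: the bound $|(\nabla V,x)|\le\alpha V$ makes $Q$ comparable to $\int V u^2$, and $2V+(\nabla V,x)\ge-\beta$ bounds $Q$ from below by $-\tfrac\beta2\|u\|_2^2$; since $V\ge V_0>0$ and $E\hookrightarrow H_V$ continuously, the quantities $\int u^2$ and $\int V u^2$ are lower order relative to $\|u\|_0^2$.

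The main obstacle is to convert these two identities into a genuine bound: feeding them into one another controls $\|u_\lambda\|_0^2$ and $L(u_\lambda)$ as $\lambda\uparrow1$, but the $E$--norm also contains the logarithmic weight $\int_{\R^2}\log(1+|x|)u^2\,dx$, which the identities do not see directly. To handle it I would split $D=B_1-B_2$ with $B_1(u)=\int\int\log(1+|x-y|)u^2u^2$ and $B_2(u)=\int\int\log(1+\tfrac1{|x-y|})u^2u^2$, bound $B_2\le C\|u\|_{8/3}^4$ by the Hardy--Littlewood--Sobolev inequality, and use the boundedness of $D$ coming from the energy to bound $B_1$. The elementary lower estimate $B_1(u)\ge\big(\int_{|y|\le R}u^2\big)\int_{|x|\ge2R}\big(\log(1+|x|)-\log2\big)u^2$ then transfers the bound on $B_1$ to $\int\log(1+|x|)u^2$, provided the mass does not vanish. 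Non--vanishing, and the passage to the limit, are supplied by the compact embedding $H_V\hookrightarrow L^s(\R^2)$ ($s\ge2$) granted by $(V_0)$: it excludes vanishing (otherwise $L(u_\lambda)\to0$, forcing $c_\lambda\le0$ against $c_\lambda\ge\rho>0$) and delivers a nontrivial limit $u_0$ solving \eqref{eq 1.5} as $\lambda\uparrow1$. I expect the borderline case $p=3$, where the coefficient $p-3$ in the second identity degenerates, to need a separate argument.

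Finally, to obtain a ground state I would set $m=\inf\{I(u):u\in E\setminus\{0\},\ I'(u)=0\}$, which is meaningful because the set of nontrivial solutions is now nonempty. Each such solution satisfies both $I'(u)u=0$ and $P(u)=0$, so the two identities hold with $\lambda=1$ and a minimizing sequence of solutions is bounded in $E$ by exactly the reasoning above. The compact embedding lets me extract a subsequence converging to some $\bar u$; passing to the limit in the equation shows that $\bar u$ is a nontrivial critical point with $I(\bar u)=m$, that is, a ground state solution of \eqref{eq 1.5}.
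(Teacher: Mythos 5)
Your overall architecture matches the paper's (Jeanjean's monotonicity trick, Pohozaev identity, compact embedding $H_V \hookrightarrow L^s$, then minimizing $I$ over the set $\mathcal{K}$ of nontrivial solutions), and the two identities you derive for critical points of $I_\lambda$ are correct. But there is a genuine gap, and it sits exactly at the heart of the theorem: your boundedness mechanism only works for $3<p<4$. Your second identity reads $\frac{2b(p-3)\lambda}{p}L(u_\lambda)=4c_\lambda-Q(u_\lambda)-\frac{1}{8\pi}\bigl(\int_{\R^2}u_\lambda^2\,dx\bigr)^2$, and for $p\le 3$ the coefficient on the left is zero or \emph{negative}, so ``feeding the identities into one another'' gives no control of $L(u_\lambda)$ or $\|u_\lambda\|_0$: the $L^p$-term now sits on the wrong side with the wrong sign, and via Gagliardo--Nirenberg it couples back to $|\nabla u_\lambda|_2$, which is precisely the quantity you are trying to bound. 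You flag only $p=3$ as ``borderline'' and defer it, but the whole range $2<p\le 3$ fails in your scheme, and the theorem claims $2<p<4$. The paper closes this in Lemma \ref{lem 4.7} (Case 2) by a blow-up/rescaling argument that you would need to supply: assuming $|\nabla u_n|_2\to\infty$, set $t_n=|\nabla u_n|_2^{-1/2}$ and $v_n(x)=t_n^2u_n(t_nx)$; multiplying the inequality $c_{1/2}\ge I_{\lambda_n}(u_n)-\frac14 J_{\lambda_n}(u_n)\ge -\frac{\beta}{8}|u_n|_2^2+\frac{1}{32\pi}|u_n|_2^4-\frac{b(3-p)}{2p}|u_n|_p^p$ by $t_n^4$ yields $|v_n|_2=o(t_n^{1/2})$ (resp. $o(t_n^{(3-p)/2})$ for $2<p<3$); then the scaling anomaly $N_0(u_n)=t_n^{-4}\bigl(N_0(v_n)+\frac{\log t_n}{2\pi}|v_n|_2^4\bigr)$ inserted into $t_n^4 I_{\lambda_n}'(u_n)u_n=0$ forces $0\ge 1+N_0(v_n)+o(1)$, which contradicts $N_2(v_n)\le C|v_n|_2^3\to 0$. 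Without this (or an equivalent device), your proof establishes the theorem only for $3<p<4$.

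A secondary, fixable deviation: you perturb only the power term, taking $B(u)=\frac{b}{p}L(u)$, whereas the paper takes $B(u)=\frac12|u|_*^2+\frac{b}{p}|u|_p^p$ and $A(u)=\frac12\|u\|_E^2+\frac14 N_0(u)$. Jeanjean's theorem requires that $A(u)\to\infty$ or $B(u)\to\infty$ as $\|u\|_E\to\infty$, and with your choice this hypothesis is not obvious (neither of your $A$, $B$ visibly controls the weight $|u|_*$; the paper's Lemma \ref{lem 4.3} uses the splitting $|u|_2^2\le \pi^{(p-2)/p}|u|_p^2+\frac{1}{\ln 2}|u|_*^2$, which presupposes $|u|_*^2$ appears in $B$). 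The paper's choice costs only the benign, sign-controlled terms $\frac{1-\lambda}{2}\bigl(2|u|_*^2\pm\int_{\R^2}\frac{|x|}{1+|x|}u^2\,dx\bigr)$ in the Pohozaev identity and in $J_\lambda$, and in exchange it also supplies the coercive term $(1-\lambda)|u|_*^2$ used to rule out a trivial weak limit. Your remaining steps (mountain-pass geometry via a ray $t\mapsto tu_\tau$ after making $D(u_\tau)<0$, the $B_1$/$B_2$ splitting with Hardy--Littlewood--Sobolev, the transfer of the $B_1$-bound to $|u|_*$, and the final minimization over $\mathcal{K}$ with nonvanishing via the compact embedding) all track the paper's Lemmas \ref{lem 2.3}, \ref{lem 4.4}, \ref{lem 4.8} and Proposition \ref{prop 4.5} faithfully.
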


Observe that, Theorems \ref{th 1.1} and \ref{th 1.2} give a complete classification
on the existence of ground state solutions for \eqref{eq 1.5}.
We now give the main idea of the proof of Theorem \ref{th 1.2}.
If we apply the mountain pass theorem directly to the functional $I$,
we may then obtain a Palais-Smale sequence for $I$.
However, as mentioned above, it is very difficult to check the boundedness of
this Palais-Smale sequence. To overcome this obstacle,
inspired by \cite{Chen-2020}, we use Jeanjean's monotonicity trick
\cite{Jeanjean-1999} to construct a bounded Palais-Smale sequence
for $I$. Once this step is taken, we can follow arguments
in \cite{Cingolani-Weth-2016} to pass to a subsequence which converges to
a mountain pass solution of \eqref{eq 1.5}, so that the set $\mathcal K$
of nontrivial solutions of \eqref{eq 1.5} is nonempty. Finally we consider
a sequence $\{u_n\}$ in $\mathcal K$ with $I(u_n) \to  \inf_{\mathcal K} I$
as $n \to \infty$, and in the same way as before we may also pass to a subsequence
which converges to a ground state solution of \eqref{eq 1.5}.

In the case $p \ge 4$ studied in Theorem \ref{th 1.1}, $I$ has a rather simple saddle point structure
with respect to the fibres $\{t u: t>0\} \subset E$, $u \in E \setminus \{0\}$, see Lemma \ref{lem 3.1} below.
In particular, this implies that the ground state energy $c_g$ coincides with the minimum of $I$
on the associated Nehari manifold $\mathcal{N}$ and obeys the simple minimax characterization
$c_g= \inf_{u \in E \setminus \{0\}} \sup_{t > 0} I(tu)$. In case $2<p<4$ since, as already remarked above,
the function $|s|^{p-5}s$ is not increasing on $(-\infty, 0)$ and $(0, \infty)$,
this property is lost. As a consequence, we attempt to give a natural characterization of
the ground state energy as a constrained minimum and as a minimax value in this case.

Our third main result is concerned with the minimax characterization of ground state solutions
for \eqref{eq 1.5} in the case where $3 \leq p < 4$. To this aim, besides $(V_0)$,
we need the following condition:
\begin{itemize}
  \item [$(V_2)$] $V \in C^1(\R^2, \R)$ and if $\mathcal{V}(x) := V(x)-\frac{1}{2}(\nabla V(x),x)$,
   then the function $(0, \infty) \rightarrow \R$, $t \mapsto \mathcal{V}(tx)$ is nondecreasing
   on $(0, \infty)$ for every $x \in \R^2$.
\end{itemize}
This condition is thought of as a monotonicity condition on the external potential $V$,
which plays a fundamental role in recovering the saddle point structure of the functional $I$.

Inspired by \cite{Du-Weth-2017}, we introduce the auxiliary functional $J: E \to \R$
defined as
\begin{align}\label{eq 1.8}
  J(u)=&\int_{\R^2} \left(2 |\nabla u|^2+  \mathcal{V}(x) u^2 -\frac{2b(p-1)}{p}|u|^{p}\right)dx
     -\frac{1}{8 \pi} \left(\int_{\mathbb{R}^{2}} u^{2}\,dx\right)^{2} \nonumber \\
   &+ \frac{1}{2 \pi}\int_{\R^2}\int_{\R^2}\log\left(|x-y|\right)u^{2}(x)u^{2}(y)\,dxdy,
\end{align}
and set
\begin{equation}\label{eq 1.9}
  \mathcal M:= \{u \in E \setminus \{0\}:\: J(u)=0\}.
\end{equation}
In the following, the set $\mathcal{M}$ is referred to as the Nehari-Pohozaev mainfold.
Such a similar set was first proposed by Ruiz \cite{Ruiz-2006} for the study of \eqref{eq 1.3}
with $d=3$.

\begin{theorem}\label{th 1.3}
Suppose that $b \geq0 $, $p \geq 3$, and that $(V_0)$ and $(V_2)$ hold.
Then the restriction of $I$ to $\mathcal{M}$ attains a global minimum,
and every minimizer $\bar{u} \in \mathcal{M}$ of $I|_{\mathcal{M}}$ is a ground state solution
of \eqref{eq 1.5} which does not change sign and obeys the minimax characterization
\begin{equation*}
  I(\bar{u}) = \inf_{u \in E \backslash \{0\}} \sup_{t>0} I(u_t),
\end{equation*}
where $u_t \in E$ is defined by $u_t(x):= t^2u(tx)$ for $u \in E$ and $t > 0$.
\end{theorem}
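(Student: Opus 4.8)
The plan is to obtain $\bar u$ as a minimizer of $I$ over the Nehari--Pohozaev manifold $\mathcal{M}$ of \eqref{eq 1.9} and to exploit the fibering $t\mapsto u_t$, $u_t(x)=t^2u(tx)$. The starting point is the scaling identity $\frac{d}{dt}I(u_t)=\frac1t J(u_t)$ for $t>0$, checked by differentiating \eqref{eq 1.6} after the substitution $y=tx$ and comparing with \eqref{eq 1.8}; in particular $J(u)=\frac{d}{dt}I(u_t)\big|_{t=1}$, and $u_t\in\mathcal{M}$ exactly when $t$ is a critical point of $g_u(t):=I(u_t)$. The heart of the argument is to show that, for every $u\in E\setminus\{0\}$, $g_u$ has a \emph{unique} critical point $t(u)>0$, which is its global maximum. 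For this I would write
\[
\frac{J(u_t)}{t^4}=2\!\int_{\R^2}\!|\nabla u|^2 +\frac{1}{2\pi}\!\int_{\R^2}\!\!\int_{\R^2}\!\log(|x-y|)u^2u^2 -\frac{1}{8\pi}\Big(\!\int_{\R^2}\!u^2\Big)^2 -\frac{\log t}{2\pi}\Big(\!\int_{\R^2}\!u^2\Big)^2 + t^{-2}\!\int_{\R^2}\!\mathcal V(y/t)u^2\,dy -\frac{2b(p-1)}{p}t^{2p-6}\!\int_{\R^2}\!|u|^p,
\]
and verify that the right-hand side is strictly decreasing in $t$: the term $-\frac{\log t}{2\pi}(\int u^2)^2$ is strictly decreasing, the factor $t^{2p-6}$ is nondecreasing because $2p-6\ge0$ when $p\ge3$, and the potential term is strictly decreasing because $(V_2)$ forces $t\mapsto\mathcal V(y/t)$ to be nonincreasing while $(V_0)$ and $(V_2)$ yield $\mathcal V\ge\mathcal V(0)=V(0)>0$. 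Since the expression tends to $+\infty$ as $t\to0^+$ and to $-\infty$ as $t\to\infty$, there is a unique zero $t(u)$, with $J(u_t)>0$ for $t<t(u)$ and $J(u_t)<0$ for $t>t(u)$. This gives $\mathcal{M}\neq\emptyset$, the equality $\sup_{t>0}I(u_t)=I(u_{t(u)})$, and the minimax identity $c_{\mathcal M}:=\inf_{\mathcal M}I=\inf_{u\in E\setminus\{0\}}\sup_{t>0}I(u_t)$.

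Next I would establish that $c_{\mathcal M}>0$ and that minimizing sequences for $I|_{\mathcal M}$ are bounded in $E$. The key algebraic observation is that on $\mathcal{M}$ the gradient and logarithmic terms cancel in the combination $I-\tfrac14 J$: using $\mathcal V=V-\tfrac12(\nabla V,x)$ one finds, for $u\in\mathcal{M}$,
\[
I(u)=I(u)-\tfrac14 J(u)=\frac18\int_{\R^2}\big(2V+(\nabla V,x)\big)u^2 +\frac{b(p-3)}{2p}\int_{\R^2}|u|^p +\frac{1}{32\pi}\Big(\int_{\R^2}u^2\Big)^2 .
\]
When $p\ge3$ the coefficient $\tfrac{b(p-3)}{2p}$ is nonnegative, and $(V_0)$--$(V_2)$ force $2V+(\nabla V,x)\ge0$: restricting to a ray and setting $v(t):=V(ty)$, condition $(V_2)$ reads $(v'/t)'\le0$, so if $2V+(\nabla V,x)=2v(1)+v'(1)<0$ somewhere then $v'/t<0$ there, whence $v\to-\infty$, contradicting $V\ge V_0>0$. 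Hence $I\ge0$ on $\mathcal{M}$ and $c_{\mathcal M}\ge0$. Splitting the kernel as $\log|x-y|=\log(1+|x-y|)-\log(1+\tfrac1{|x-y|})$ and controlling the negative piece by the Hardy--Littlewood--Sobolev inequality as in \cite{Cingolani-Weth-2016}, one obtains the mountain-pass lower bound $\inf_{\mathcal M}\|u\|_E\ge\rho>0$, hence $c_{\mathcal M}>0$; the same identity bounds $\int u^2$ and $\int|u|^p$ along a minimizing sequence, and reinserting these into $J(u)=0$ returns bounds on $\int|\nabla u|^2$ and on the log-weighted term, giving boundedness in $E$.

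With a bounded minimizing sequence $\{u_n\}\subset\mathcal{M}$ in hand, I would pass to the limit. Since $E\hookrightarrow H_V$ and $H_V$ embeds compactly into $L^s(\R^2)$ for all $s\ge2$ (see \cite{Bartsch-Wang-1995}), a subsequence converges strongly in $L^s$ and weakly in $E$ to some $\bar u$. The lower bound $c_{\mathcal M}>0$ together with the coercivity identity above rules out $\bar u=0$ (otherwise $\int u_n^2,\int|u_n|^p\to0$ would force $c_{\mathcal M}=0$). Using weak lower semicontinuity of the gradient and potential terms, the strong $L^s$ convergence of the local term, and the Brezis--Lieb type splitting of the logarithmic term from \cite{Cingolani-Weth-2016,Du-Weth-2017}, one shows that the projection $\bar u_{t(\bar u)}\in\mathcal{M}$ satisfies $I(\bar u_{t(\bar u)})\le\liminf I(u_n)=c_{\mathcal M}$, and hence equals $c_{\mathcal M}$; one then checks $t(\bar u)=1$, so $\bar u\in\mathcal{M}$ realizes the minimum. \emph{This compactness step — controlling the sign-changing nonlocal logarithmic term under weak convergence and excluding vanishing — is the main obstacle}, and it is precisely here that assumption $(V_0)$ (through the compact embedding) and the Cingolani--Weth estimates are indispensable.

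Finally, a Lagrange-multiplier argument promotes $\bar u$ to a critical point. Writing $\eta:=\partial_t\bar u_t\big|_{t=1}=2\bar u+(x,\nabla\bar u)$, one has $\langle I'(\bar u),\eta\rangle=\frac{d}{dt}I(\bar u_t)\big|_{t=1}=J(\bar u)=0$, while $\langle J'(\bar u),\eta\rangle=\frac{d}{dt}J(\bar u_t)\big|_{t=1}<0$ by the strict monotonicity of $t\mapsto t^{-4}J(\bar u_t)$ at its zero $t=1$; in particular $J'(\bar u)\neq0$, so $\mathcal{M}$ is a $C^1$-manifold near $\bar u$ and $I'(\bar u)=\lambda J'(\bar u)$ for some $\lambda$. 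Pairing with $\eta$ gives $0=\lambda\langle J'(\bar u),\eta\rangle$, forcing $\lambda=0$, so $\bar u$ is a nontrivial solution of \eqref{eq 1.5}. Conversely, every nontrivial solution $w$ satisfies the Nehari--Pohozaev identity $J(w)=\langle I'(w),2w+(x,\nabla w)\rangle=0$ (the Pohozaev part obtained by the standard rescaling argument), hence $w\in\mathcal{M}$ and $I(w)\ge c_{\mathcal M}=I(\bar u)$; thus $\bar u$ is a ground state. Since every term of $I$ and $J$ depends only on $|u|$, we have $|\bar u|\in\mathcal{M}$ with $I(|\bar u|)=I(\bar u)$, so $|\bar u|$ is also a ground-state solution, and the strong maximum principle shows it is positive; a standard argument then yields that $\bar u$ does not change sign. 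The minimax characterization $I(\bar u)=\inf_{u\in E\setminus\{0\}}\sup_{t>0}I(u_t)$ was already recorded in the first paragraph.
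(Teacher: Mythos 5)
Your proposal follows the paper's architecture almost exactly up to the last step: the fibering analysis along $u_t(x)=t^2u(tx)$ with the strict monotonicity of $t\mapsto J(u_t)/t^4$ (this is the content of Lemmas 5.2--5.3 and Corollary 5.4 of the paper, resting on $(V_2)$ through $t^{-1}\beta'(t)=2\int_{\R^2}\mathcal V(t^{-1}x)u^2\,dx$), the coercivity identity $I(u)=I(u)-\frac14J(u)$ on $\mathcal M$ (eq.\ (5.3)), boundedness of minimizing sequences, compactness via the embedding of $H_V$ into $L^s(\R^2)$, and the sign argument through $|u|$. The genuine gap is in your Lagrange-multiplier step. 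The ``tangent vector'' $\eta=2\bar u+(x,\nabla\bar u)$ is in general \emph{not} an element of $E$ (for generic $u\in E$ one has $(x,\nabla u)\notin L^2(\R^2)$), and the fiber map $t\mapsto \bar u_t$ is continuous but not differentiable as a curve in $E$; hence the pairings $\langle I'(\bar u),\eta\rangle$ and $\langle J'(\bar u),\eta\rangle$ and the chain-rule identities you invoke are not defined, let alone justified. Worse, your claim $\langle J'(\bar u),\eta\rangle=\frac{d}{dt}J(\bar u_t)\big|_{t=1}$ would require differentiating $t\mapsto t^{-2}\int_{\R^2}\mathcal V(y/t)\bar u^2\,dy$, i.e.\ second derivatives of $V$, while $(V_2)$ only gives $V\in C^1$; so the strict inequality you need to conclude $J'(\bar u)\neq0$, and with it the $C^1$-manifold structure of $\mathcal M$ near $\bar u$, is unsupported. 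This is precisely why the paper avoids the multiplier route: in part (ii) of Proposition 5.8 it argues by contradiction from $I'(u)v<0$, uses only the $C^1$ smoothness of $I$ (a uniform $\eps$-decrease property near $u$) together with the \emph{continuity} of $u\mapsto t_u$ from Lemma 5.3 to project $u+\tau v$ back onto $\mathcal M$, and obtains $I(Q(t^\tau,u+\tau v))<m$ with $Q(t^\tau,u+\tau v)\in\mathcal M$ --- never differentiating along the scaling fiber and never touching $J'$. Your final step must be replaced by an argument of this kind.

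Two smaller points. First, your exclusion of vanishing (``$\int u_n^2,\int|u_n|^p\to0$ would force $c_{\mathcal M}=0$'') is too quick: since $V$ is unbounded, the term $\frac18\int_{\R^2}\bigl(2V+(\nabla V,x)\bigr)u_n^2\,dx$ in the coercivity identity does not vanish merely because $u_n\to0$ in $L^2$; the paper instead extracts $|\nabla u_n|_2\to0$, $N_1(u_n)\to0$ and $\int\mathcal V u_n^2\to0$ from $J(u_n)=0$ before concluding. Relatedly, your claimed bound on the log-weighted norm $|u_n|_*$ rests on Lemma 2.3, which needs a nonzero pointwise limit, so $E$-boundedness cannot precede nonvanishing as in your ordering; the paper works with $H_V$-boundedness (Lemma 5.6) first. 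Second, when $p=3$ or $b=0$ the identity does \emph{not} bound $\int|u_n|^p$ (the coefficient $b(p-3)/2p$ vanishes), so the reinsertion into $J(u_n)=0$ must be done via Gagliardo--Nirenberg, using $N_2(u_n)\le C|u_n|_2^3|\nabla u_n|_2$ and $|u_n|_3^3\le C|u_n|_2^2|\nabla u_n|_2$ to get $2|\nabla u_n|_2^2\le C(1+|\nabla u_n|_2)$; this does work and is in fact more direct than the paper's rescaling contradiction (Lemma 5.6, Case 2), but your sketch glosses over it. Likewise ``one then checks $t(\bar u)=1$'' is exactly where the paper needs the inequality $I(Q(t,u))\le I(u)-\frac{1-t^4}{4}J(u)$ of Lemma 5.5 together with weak lower semicontinuity to rule out $J(\bar u)<0$; you presume this rather than prove it.
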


\begin{remark} \rm
Theorem \ref{th 1.3} yields that \eqref{eq 1.5} has a ground state solution in the case
where $p\geq 3$, and every ground state solution of \eqref{eq 1.5} does not change sign.
Moreover, the ground state energy $c_g$ coincides with the minimum of $I$ on the associated
Nehari-Pohozaev manifold $\mathcal{M}$ and obeys a new minimax characterization
$c_g= \inf_{u \in E \setminus \{0\}} \sup_{t > 0} I(u_t)$.
However, this minimax characterization is lost for the case $2<p <3$, and we could not
find any similar saddle point structure of $I$.
\end{remark}

\begin{remark} \rm
By $(V_0)$ and $(V_2)$, we find that $(\nabla V(x),x) \geq 0$ for all $x \in \R^2$,
see Lemma \ref{lem 5.1} below. Moreover, invoking $(V_2)$ we further obtain
\begin{equation}\label{eq 1.10}
  \mathcal{V}(x) \geq V_0 \qquad \text{and} \qquad
   0 \leq (\nabla V(x),x) \leq  2V(x) \qquad \text{for all $x \in \R^2$,}
\end{equation}
so that $(V_1)$ follows. There are indeed many functions
which satisfy $(V_0)$, $(V_1)$ and $(V_2)$.
Here we present two examples. One example is a coercive potential function given by
$V(x) = 1 + |x|^q $ for $x \in \R^2$. It is easy to verify that
$V$ satisfies $(V_0)$ and $(V_1)$ in the case where $q>0$,
while $V$ satisfies $(V_2)$ in the case where $1 < q \leq 2$.
The other is a non-coercive potential function given by
\begin{equation*}
   V(x)=\left\{
    \begin{array}{ll}
       1,  & \text{if } x \in D_n, \: n=1,2, \cdot\cdot\cdot,  \\[2mm]
       1+ |x|^2, & \text{if } x \in \R^2 \backslash \bigcup\limits_{n=1}^{\infty} D_n,
    \end{array}
  \right.
\end{equation*}
where $D_n := [n, n+\frac{1}{2n}] \times [n, n+\frac{1}{2n}]$ is a square for $n \in \N$.
Clearly, we see that $V$ satisfies $(V_0)$.
\end{remark}

The rest of this paper is organized as follows. In Section \ref{sec 2}, we set up
the variational framework for \eqref{eq 1.5} and present some preliminary results.
In Section \ref{sec 3}, we give the proof of Theorem \ref{th 1.1}
on the existence of ground state solutions to \eqref{eq 1.5} for the case $p \geq 4$.
Section \ref{sec 4} is devoted to the proof of Theorem \ref{th 1.2} on the existence of ground state solutions
to \eqref{eq 1.5} for the case $2< p < 4$. Finally, in Section \ref{sec 5} we complete
the proof of Theorem \ref{th 1.3}.

Throughout this paper, we shall make use of the following notations.
$L^{s}(\mathbb{R}^{2})$ denotes the usual Lebesgue space with the norm $|\cdot|_{s}$ for $1\leq s \leq \infty$.
For any $z \in \mathbb{R}^{2}$ and for any $\rho>0$, $B_{\rho}(z)$ denotes the ball of radius $\rho$ centered at $z$.
$X'$ stands for the dual space of $X$.
As usual, the letters $C$, $C_{1}$, $C_{2}, \cdot\cdot\cdot$ denote positive constants
that can change from line to line. Finally, when taking limits, the symbol $o(1)$ stands for any quantity
that tends to zero.

\section{Preliminaries} \label{sec 2}

\indent

In this section, we shall recall the variational framework for \eqref{eq 1.5}
as in \cite{Cingolani-Weth-2016} and present some useful preliminary results.
Throughout the paper, we always assume that $b \geq 0$, $p >2$, and that
$(V_0)$ holds.
Let $H^{1}(\mathbb{R}^{2})$ be the usual Sobolev space endowed with
the scalar product and norm
\begin{equation*}
  \langle u, v\rangle =\int_{\mathbb{R}^{2}} \left(\nabla u \cdot \nabla v
     + uv \right)dx \qquad \text{and}  \qquad \|u\| =\langle u, u\rangle^{1/2},
\end{equation*}
and let $H_V$ be the Hilbert subspace of $u \in H^1(\R^2)$ under the norm
\begin{equation*}
  \|u\|_V := \left(\int_{\R^2} (|\nabla u|^2 + V(x)u^2)\,dx \right)^{1/2}< \infty.
\end{equation*}
Thanks to $(V_0)$, we have the following compact embedding result
due to \cite{Bartsch-Wang-1995}.

\begin{lemma}[\hspace{-0.05ex}{\cite[Theorem 2.1]{Bartsch-Wang-1995}}]\label{lem 2.1}
The embedding of $H_V$ into $L^s(\R^2)$ is compact for all $s \in [2, \infty)$.
\end{lemma}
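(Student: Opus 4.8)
The plan is to prove compactness first for the exponent $s=2$, where the weight $V$ enters most directly, and then to recover the general case $s \in (2,\infty)$ by interpolation. Throughout, recall that in dimension two the Sobolev embedding gives $H^1(\R^2) \hookrightarrow L^q(\R^2)$ continuously for every $q \in [2, \infty)$, and since $\|u\|_V^2 \geq \min\{1, V_0\}\,\|u\|^2$ by $(V_0)$, the inclusion $H_V \hookrightarrow L^q(\R^2)$ is continuous for all such $q$. Hence, given a bounded sequence $\{u_n\} \subset H_V$, after passing to a subsequence I may assume $u_n \rightharpoonup u$ weakly in $H_V$, and therefore also weakly in $H^1(\R^2)$ and in each $L^q(\R^2)$, $q \in [2,\infty)$; it then suffices to show that $u_n \to u$ strongly in $L^2(\R^2)$.

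The crux of the argument is a uniform smallness estimate for the tails in $L^2$, and this is exactly where condition $(V_0)$ is decisive. Fix $\eps > 0$. For $M > 0$ set $A_M := \{x \in \R^2 : V(x) \leq M\}$, which has finite measure by $(V_0)$. On the complement $\{V > M\}$ one estimates, for every $n$,
\[
\int_{\{V > M\}} u_n^2\,dx \leq \frac{1}{M}\int_{\R^2} V(x)u_n^2\,dx \leq \frac{1}{M}\|u_n\|_V^2 \leq \frac{C}{M},
\]
so choosing $M$ large renders this term smaller than $\eps/2$. On $A_M \cap B_R(0)^c$, whose measure tends to $0$ as $R \to \infty$ since $|A_M| < \infty$, Hölder's inequality together with the uniform bound of $\{u_n\}$ in $L^4(\R^2)$ yields
\[
\int_{A_M \cap B_R(0)^c} u_n^2\,dx \leq |A_M \cap B_R(0)^c|^{1/2}\,|u_n|_4^2 \leq C\,|A_M \cap B_R(0)^c|^{1/2},
\]
which falls below $\eps/2$ once $R$ is taken large. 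Combining the two estimates produces an $R > 0$ with $\int_{B_R(0)^c} u_n^2\,dx < \eps$ for all $n$, and by weak lower semicontinuity of the $L^2$ norm the same bound holds for the limit $u$.

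Finally I would invoke local compactness: on the fixed ball $B_R(0)$ the Rellich--Kondrachov theorem shows that $\{u_n\}$, bounded in $H^1(B_R(0))$, converges strongly in $L^2(B_R(0))$, necessarily to $u$. Splitting $|u_n - u|_2^2$ into its contributions over $B_R(0)$ and over $B_R(0)^c$, then letting $n \to \infty$ and afterwards $\eps \to 0$, gives $u_n \to u$ in $L^2(\R^2)$. For general $s \in (2,\infty)$, pick $q > s$ and write $\tfrac{1}{s} = \tfrac{\theta}{2} + \tfrac{1-\theta}{q}$ with $\theta \in (0,1)$; the interpolation bound
\[
|u_n - u|_s \leq |u_n - u|_2^{\theta}\,|u_n - u|_q^{1-\theta}
\]
forces $|u_n - u|_s \to 0$, since $\{u_n - u\}$ remains bounded in $L^q(\R^2)$ while its $L^2$ norm vanishes. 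The main obstacle is the tail estimate: one must use both halves of $(V_0)$ in tandem, the coercivity $V > M$ off a set of finite measure to suppress the far field and the finiteness of $|A_M|$ to control the intermediate annular region, rather than either condition in isolation.
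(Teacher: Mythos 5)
Your proof is correct. Note that the paper does not prove this lemma itself---it is quoted directly from Bartsch--Wang \cite[Theorem 2.1]{Bartsch-Wang-1995}---and your argument reconstructs exactly the standard proof of that cited result: split the tail $B_R(0)^c$ into $\{V>M\}$, where $\int_{\{V>M\}}u_n^2\,dx \leq M^{-1}\|u_n\|_V^2$, and the finite-measure set $\{V\leq M\}\cap B_R(0)^c$, handled by H\"older and the uniform $L^4$ bound; then apply Rellich--Kondrachov on $B_R(0)$ (strictly, Rellich yields a subsequence, but since every subsequence has a further subsequence converging strongly to the common weak limit $u$, the full sequence converges, as you implicitly use) and conclude the case $s>2$ by interpolation with the continuous embedding $H^1(\R^2)\hookrightarrow L^q(\R^2)$.
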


We now define, for any measurable function $u: \, \mathbb{R}^{2}\rightarrow \mathbb{R}$,
\begin{equation*}
  |u|_{*} =  \left(\int_{\mathbb{R}^{2}} \log\left(1+|x|\right)u^{2}\,dx\right)^{1/2} \in [0, \infty].
\end{equation*}
As has been mentioned in the introduction, we shall work in the Hilbert space
\begin{equation*}
  E := \left\{u \in H_V :\: |u|_{*}< \infty \right\}
\end{equation*}
equipped with the norm $\|u\|_{E}:= \left(\|u\|_V^{2}+|u|^{2}_{*}\right)^{1/2}$.
Of course, $E = X \cap H_V$. Next, we define the symmetric bilinear forms
\begin{align*}
  (u,v) \mapsto B_{1}(u,v)&= \frac{1}{2 \pi} \int_{\mathbb{R}^{2}}\int_{\mathbb{R}^{2}}
     \log \left(1+|x-y|\right)u(x)v(y)\,dxdy,\\
  (u,v) \mapsto B_{2}(u,v)&= \frac{1}{2 \pi} \int_{\mathbb{R}^{2}}\int_{\mathbb{R}^{2}}
     \log \left(1+\frac{1}{|x-y|}\right)u(x)v(y)\,dxdy,\\
  (u,v) \mapsto B_{0}(u,v)&= B_{1}(u,v)-B_{2}(u,v) = \frac{1}{2 \pi} \int_{\mathbb{R}^{2}}
     \int_{\mathbb{R}^{2}}\log \left(|x-y|\right)u(x)v(y)\,dxdy,
\end{align*}
where the definition is restricted, in each case, to measurable functions
$u, v: \R^2 \rightarrow \R$ such that the corresponding double integral is
well-defined in Lebesgue sense. Then, we define on $E$ the associated functionals
\begin{align*}
  N_{1}(u)& :=B_{1}\left(u^{2}, u^{2}\right)= \frac{1}{2 \pi}
     \int_{\mathbb{R}^{2}}\int_{\mathbb{R}^{2}}\log \left(1+|x-y|\right)u^{2}(x)u^{2}(y)\,dxdy,\\
  N_{2}(u)& :=B_{2}\left(u^{2}, u^{2}\right)= \frac{1}{2 \pi} \int_{\mathbb{R}^{2}}
     \int_{\mathbb{R}^{2}}\log \left(1+\frac{1}{|x-y|}\right)u^{2}(x)u^{2}(y)\,dxdy,\\
  N_{0}(u)& :=B_{0}\left(u^{2}, u^{2}\right)= \frac{1}{2 \pi}\int_{\mathbb{R}^{2}}
     \int_{\mathbb{R}^{2}}\log \left(|x-y|\right)u^{2}(x)u^{2}(y)\,dxdy.
\end{align*}
Using the above notations, we can rewrite the functional $I$ defined by \eqref{eq 1.6}
in the following form:
\begin{equation*}
  I(u) = \frac{1}{2}\|u\|_V^2 + \frac{1}{4} N_0(u) - \frac{b}{p}|u|^p_p.
\end{equation*}
Observe that
\begin{equation*}
\log(1+|x-y|)\leq \log(1+|x|+|y|)\leq \log(1+|x|)+\log(1+|y|) \quad \text{for} \ x, y \in \R^2,
\end{equation*}
we have the estimate
\begin{align*}
  B_{1}(uv, wz) &\leq \frac{1}{2 \pi} \int_{\mathbb{R}^{2}}\int_{\mathbb{R}^{2}}
    \bigl[\log\left(1+|x|\right)+\log\left(1+|y|\right)\bigr]
    \left|u(x)v(x)\right|\left|w(y)z(y)\right|dxdy \notag\\
  &\leq \frac{1}{2 \pi} \left(|u|_{*}|v|_{*}|w|_{2}|z|_{2}
    + |u|_{2}|v|_{2}|w|_{*}|z|_{*}\right)
  \quad \text{for}\ u, v, w, z \in E.
\end{align*}
Since $0< \log (1+r)<r$ for $r>0$, we may deduce from the Hardy-Littlewood-Sobolev inequality
(see \cite[Therorem 4.3]{Lieb-Loss-2001}) that
\begin{equation}\label{eq 2.1}
  \left|B_{2}(u,v)\right| \leq  \frac{1}{2 \pi} \int_{\mathbb{R}^{2}}
   \int_{\mathbb{R}^{2}}\frac{1}{|x-y|}\left|u(x)v(y)\right|dxdy
   \leq C_{0}|u|_{\frac{4}{3}}|v|_{\frac{4}{3}}  \quad
   \text{for}\ u, v \in L^{\frac{4}{3}}(\R^2)
\end{equation}
with a constant $C_{0}>0$, which obviously means that
\begin{equation}\label{eq 2.2}
    \left|N_{2}(u)\right| \leq C_{0} |u|_{\frac{8}{3}}^{4} \qquad
    \text{for}\ u \in L^{\frac{8}{3}}(\R^2).
\end{equation}

We close this section with some useful results from \cite{Cingolani-Weth-2016}.

\begin{lemma}[\hspace{-0.05ex}{\cite[Lemma 2.2]{Cingolani-Weth-2016}}]\label{lem 2.2}
The following properties hold true.

\begin{itemize}
  \item [\rm(i)] The space $E$ is compactly embedded into $L^{s}(\R^2)$
     for all $s \in [2, \infty)$.

  \item [\rm(ii)]  The functionals $N_{0}, N_{1}, N_{2}$ and $I$ are of class
     $C^{1}$ on $E$. Moreover,
     \begin{equation*}
        N'_{i}(u)v=4B_{i}\left(u^{2}, uv\right)\quad \text{for}\ u, v \in E
         \ \text{and}\  i=0, 1,  2.
     \end{equation*}

  \item [\rm(iii)] $N_{2}$ is continuously differentiable on $L^{\frac{8}{3}}(\mathbb{R}^{2})$.

  \item [\rm(iv)] $N_{1}$ is weakly lower semicontinuous on $H^{1}(\mathbb{R}^{2})$.

  \item [\rm(v)] $I$ is weakly lower semicontinuous on $E$.

  \item [\rm(vi)] $I$ is lower semicontinuous on $H^{1}(\mathbb{R}^{2})$.

\end{itemize}
\end{lemma}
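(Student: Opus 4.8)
The plan is to dispatch the six assertions in the order (i), (iv), then (ii)--(iii), then (v)--(vi), since the compact embedding and the weak lower semicontinuity of $N_1$ are the inputs needed for the semicontinuity of $I$. For (i), I would first note that $\|u\|_V \leq \|u\|_E$, so that $E \hookrightarrow H_V$ continuously; composing this bounded inclusion with the compact embedding $H_V \hookrightarrow L^s(\R^2)$ supplied by Lemma \ref{lem 2.1} immediately yields compactness of $E \hookrightarrow L^s(\R^2)$ for every $s \in [2,\infty)$. For (iv) the key observation is that the kernel $\log(1+|x-y|)$ is nonnegative. If $u_n \rightharpoonup u$ in $H^1(\R^2)$, I pass to a subsequence with $u_n \to u$ a.e. (weak $H^1$-convergence gives $L^2_{loc}$-convergence), so that $u_n^2(x)u_n^2(y) \to u^2(x)u^2(y)$ a.e. on $\R^2 \times \R^2$; since the integrand $\log(1+|x-y|)\,u_n^2(x)u_n^2(y)$ is nonnegative, Fatou's lemma gives $\liminf_n N_1(u_n) \geq N_1(u)$, which is precisely weak lower semicontinuity with values in $[0,\infty]$.

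The genuine work sits in (ii)--(iii), where I would write $N_0 = N_1 - N_2$ and treat the two pieces with their respective estimates. For $N_1$, the bilinear bound $B_1(uv,wz) \leq \frac{1}{2\pi}\bigl(|u|_*|v|_*|w|_2|z|_2 + |u|_2|v|_2|w|_*|z|_*\bigr)$ gives both well-definedness, namely $N_1(u) \leq \frac{1}{\pi}|u|_*^2|u|_2^2 < \infty$ on $E$, and, after expanding $(u+tv)^2 = u^2 + 2tuv + t^2v^2$ and collecting the linear term, the Gateaux derivative $N_1'(u)v = 4B_1(u^2,uv)$ together with the operator bound $|B_1(u^2,uv)| \leq C\|u\|_E^3\|v\|_E$. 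For $N_2$, the Hardy--Littlewood--Sobolev estimates \eqref{eq 2.1}--\eqref{eq 2.2} combined with H\"older ($|u^2|_{4/3}=|u|_{8/3}^2$ and $|uv|_{4/3}\leq|u|_{8/3}|v|_{8/3}$) yield $N_2'(u)v = 4B_2(u^2,uv)$ with $|B_2(u^2,uv)| \leq C_0|u|_{8/3}^3|v|_{8/3}$; since everything already closes in $L^{8/3}$ this proves (iii), while the continuous embedding $E \hookrightarrow L^{8/3}$ gives the $E$-version needed for (ii). In each case continuity of $u \mapsto N_i'(u)$ follows because $B_i(u^2,uv)$ is a finite multilinear expression in $u$ controlled by these same estimates applied to differences $u^2 - w^2$ and $uv - wv$. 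The quadratic term $\frac{1}{2}\|u\|_V^2$ is smooth and $\frac{b}{p}|u|_p^p$ is $C^1$ on $E$ via $E \hookrightarrow L^p$, so $I \in C^1(E)$.

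For the remaining semicontinuity statements I would decompose $I(u) = \frac{1}{2}\|u\|_V^2 + \frac{1}{4}N_1(u) - \frac{1}{4}N_2(u) - \frac{b}{p}|u|_p^p$. For (v), take $u_n \rightharpoonup u$ in $E$: the norm term is weakly lower semicontinuous by convexity, $N_1$ is weakly lower semicontinuous by (iv) (weak $E$-convergence implies weak $H^1$-convergence), while the compact embedding (i) gives $u_n \to u$ strongly in $L^{8/3}$ and in $L^p$, whence $N_2(u_n) \to N_2(u)$ by \eqref{eq 2.2} and $|u_n|_p \to |u|_p$; since only the first two terms require a $\liminf$, these combine to $\liminf_n I(u_n) \geq I(u)$. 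For (vi), take $u_n \to u$ strongly in $H^1(\R^2)$ and pass to a.e. convergence: now $\int|\nabla u_n|^2 \to \int|\nabla u|^2$, the terms $\int V u_n^2$ and $N_1(u_n)$ are bounded from below by Fatou (using $V \geq V_0 > 0$ and the nonnegative kernel exactly as in (iv)), and $N_2(u_n)\to N_2(u)$, $|u_n|_p\to|u|_p$ by the Sobolev embedding $H^1(\R^2)\hookrightarrow L^q(\R^2)$, so again $\liminf_n I(u_n) \geq I(u)$.

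The part I expect to be the \emph{main obstacle} is the continuity of the derivatives in (ii)--(iii): showing that the maps $u \mapsto N_i'(u) = 4B_i(u^2,u\,\cdot\,)$ are continuous as maps $E \to E'$ (respectively $L^{8/3} \to (L^{8/3})'$), which forces one to carry the $|\cdot|_*$-weighted bilinear estimate for $N_1$ and the Hardy--Littlewood--Sobolev bound for $N_2$ carefully through the algebraic differences $u^2 - w^2$ and $uv - wv$. By contrast, the semicontinuity statements are comparatively soft once one isolates the single structural fact that the kernel $\log(1+|x-y|)$ is nonnegative, which is what makes the Fatou arguments in (iv) and (vi) legitimate.
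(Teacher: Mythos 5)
Your proposal is correct and follows essentially the same route as the paper's source for this lemma: the paper gives no proof of its own but defers to \cite[Lemma 2.2]{Cingolani-Weth-2016}, whose argument is exactly your combination of the weighted bilinear estimate for $B_1$, the Hardy--Littlewood--Sobolev bound \eqref{eq 2.1}--\eqref{eq 2.2} for $B_2$, and Fatou's lemma on the nonnegative kernel $\log(1+|x-y|)$ for the semicontinuity statements. Your one small deviation --- obtaining the compactness in (i) by factoring $E \hookrightarrow H_V \hookrightarrow L^s(\R^2)$ through Lemma \ref{lem 2.1} rather than through the log-weight compactness of $X$ used in \cite{Cingolani-Weth-2016} --- is a legitimate and arguably more natural adaptation to the space $E$ of the present paper, and the $V$-dependent terms in (v) and (vi) are correctly handled by weak lower semicontinuity and Fatou since $V \geq V_0 > 0$.
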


\begin{lemma}[\hspace{-0.05ex}{\cite[Lemma 2.1]{Cingolani-Weth-2016}}] \label{lem 2.3}
Let $\{u_{n}\}$ be a sequence in $L^{2}(\mathbb{R}^{2})$
such that $u_{n}\rightarrow u\in L^{2}(\mathbb{R}^{2})\backslash\{0\}$
pointwise a.e. in $\mathbb{R}^{2}$. Moreover, let $\{v_{n}\}$ be
a bounded sequence in $L^{2}(\mathbb{R}^{2})$ such that
\begin{equation*}
    \sup_{n \in \mathbb{N}} B_{1}\left(u_{n}^{2}, v_{n}^{2}\right) < \infty.
\end{equation*}
\vskip -0.1 true cm\noindent
Then there exist $n_{0} \in \mathbb{N}$ and $C>0$ such that
$|v_{n}|_{\ast}<C$ for $n \geq n_{0}$.
If moreover $B_{1}(u_{n}^{2}, v_{n}^{2}) \rightarrow 0$ and $|v_{n}|_{2} \rightarrow 0$
as $n \rightarrow \infty$, then $|v_{n}|_{\ast} \rightarrow 0$ as $n \rightarrow \infty$.
\end{lemma}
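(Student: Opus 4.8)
The plan is to estimate $|v_n|_*^2=\int_{\R^2}\log(1+|y|)v_n^2\,dy$ directly, exploiting the elementary subadditivity inequality
\begin{equation*}
  \log(1+|y|)\leq \log(1+|x|)+\log(1+|x-y|)\qquad \text{for all } x,y\in\R^2,
\end{equation*}
which follows from $1+|y|\leq 1+|x|+|x-y|\leq (1+|x|)(1+|x-y|)$. First I would fix a radius $R>0$, multiply this inequality by $u_n^2(x)$, integrate in $x$ over the ball $B_R(0)$ and in $y$ over $\R^2$ against $v_n^2(y)$. Writing $m_n:=\int_{B_R(0)}u_n^2\,dx$, this produces
\begin{equation*}
  m_n\,|v_n|_*^2\leq \Big(\int_{B_R(0)}\log(1+|x|)u_n^2\,dx\Big)|v_n|_2^2
    +\int_{B_R(0)}\int_{\R^2}\log(1+|x-y|)u_n^2(x)v_n^2(y)\,dx\,dy.
\end{equation*}
The first factor on the right is bounded by $\log(1+R)\,m_n$, and since the kernel $\log(1+|x-y|)$ is nonnegative, enlarging the $x$-domain from $B_R(0)$ to $\R^2$ shows the last double integral is at most $2\pi B_1(u_n^2,v_n^2)$. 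Hence
\begin{equation*}
  m_n\,|v_n|_*^2\leq \log(1+R)\,m_n\,|v_n|_2^2 + 2\pi B_1(u_n^2,v_n^2).
\end{equation*}

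The crucial step is a uniform lower bound on $m_n$. Since $u_n^2\to u^2$ pointwise a.e. and $u\neq 0$ in $L^2(\R^2)$, by monotone convergence in $R$ I may choose $R$ large enough that $\int_{B_R(0)}u^2\,dx=:2\delta>0$; then Fatou's lemma gives $\liminf_{n}m_n\geq \int_{B_R(0)}u^2\,dx=2\delta$, so there is $n_0\in\N$ with $m_n\geq \delta$ for all $n\geq n_0$. Dividing the displayed inequality by $m_n$ yields, for $n\geq n_0$,
\begin{equation*}
  |v_n|_*^2\leq \log(1+R)\,|v_n|_2^2 + \frac{2\pi}{\delta}\,B_1(u_n^2,v_n^2).
\end{equation*}

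The two conclusions then follow at once. Under the standing hypotheses, $\{v_n\}$ is bounded in $L^2$ and $\sup_n B_1(u_n^2,v_n^2)<\infty$, so the right-hand side is bounded by a constant $C^2$, giving $|v_n|_*<C$ for $n\geq n_0$. Under the additional hypotheses $|v_n|_2\to 0$ and $B_1(u_n^2,v_n^2)\to 0$, each term on the right tends to $0$, whence $|v_n|_*\to 0$. I want to highlight that the factor $\int_{B_R(0)}\log(1+|x|)u_n^2\,dx\leq \log(1+R)\,m_n$ exactly cancels the $m_n$, so the argument never requires $\{u_n\}$ to be bounded in $L^2$; only pointwise a.e. convergence and $u\neq 0$ are used. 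The proof is otherwise routine, and the only genuinely delicate point is obtaining the $n$-uniform lower bound $m_n\geq\delta$ from mere a.e. convergence, which is precisely where Fatou's lemma is indispensable.
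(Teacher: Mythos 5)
Your proof is correct. The paper does not prove this lemma itself---it imports it verbatim from \cite[Lemma 2.1]{Cingolani-Weth-2016}---and your argument is essentially the original one: the subadditivity $\log(1+|y|)\le \log(1+|x|)+\log(1+|x-y|)$ tested against $u_n^2(x)v_n^2(y)$ on $B_R(0)\times\R^2$, combined with the Fatou-based uniform lower bound $\int_{B_R(0)}u_n^2\,dx\ge\delta$ for $n\ge n_0$, which is exactly how the cited proof localizes the mass of $u_n$ and absorbs the weight $\log(1+|y|)$ into $B_1(u_n^2,v_n^2)$ plus a term controlled by $\log(1+R)\,|v_n|_2^2$.
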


\begin{lemma}[\hspace{-0.05ex}{\cite[Lemma 2.6]{Cingolani-Weth-2016}}]\label{lem 2.4}
Let $\{u_{n}\}$, $\{v_{n}\}$ and $\{w_{n}\}$ be bounded sequences in $E$
such that $u_{n} \rightharpoonup u$ weakly in $E$. Then, for every
$z \in E$, we have $B_{1}(v_{n}w_{n}, z(u_{n}-u)) \rightarrow 0$ as
$n\rightarrow\infty$.
\end{lemma}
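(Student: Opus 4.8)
The plan is to collapse the double integral into a single one by integrating out the $x$-variable, and then to split the resulting domain into a fixed large ball and its complement, exploiting the compact embedding $E \hookrightarrow L^2(\R^2)$ on the ball and the integrability of the logarithmic weight against $z$ in the far field. First I would use the product estimate displayed just before \eqref{eq 2.1} to see that the double integral defining $B_1(v_n w_n, z(u_n-u))$ is absolutely convergent, so that Fubini's theorem lets me write
\[
B_1(v_n w_n, z(u_n-u)) = \int_{\R^2} G_n(y)\, z(y)\bigl(u_n(y)-u(y)\bigr)\, dy, \qquad G_n(y) := \frac{1}{2\pi}\int_{\R^2}\log(1+|x-y|)\, v_n(x) w_n(x)\, dx.
\]

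Next I would record a uniform pointwise bound on $G_n$. Using $\log(1+|x-y|)\le \log(1+|x|)+\log(1+|y|)$, the Cauchy--Schwarz inequality with weight $\log(1+|x|)$, and $\int_{\R^2}|v_n w_n|\,dx \le |v_n|_2|w_n|_2$, I get
\[
|G_n(y)| \le \frac{1}{2\pi}\bigl(|v_n|_*|w_n|_* + \log(1+|y|)\,|v_n|_2|w_n|_2\bigr) \le C\bigl(1+\log(1+|y|)\bigr),
\]
with $C$ independent of $n$, since $\{v_n\}$ and $\{w_n\}$ are bounded in $E$.

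The decisive and, I expect, hardest step is the region-splitting, which is exactly what is needed to avoid the pitfall of the naive approach: inserting $v_n w_n$ and $z(u_n-u)$ directly into the product bound leaves the factor $|u_n-u|_*$, and this need not tend to zero under weak $E$-convergence. To circumvent this, fix $\eps>0$ and split $\R^2 = B_R(0)\cup B_R(0)^c$. On the far field I apply Cauchy--Schwarz with weight $1+\log(1+|y|)$:
\[
\int_{B_R(0)^c}|G_n||z||u_n-u|\,dy \le C\Bigl(\int_{B_R(0)^c}(1+\log(1+|y|))z^2\,dy\Bigr)^{1/2}\Bigl(\int_{\R^2}(1+\log(1+|y|))(u_n-u)^2\,dy\Bigr)^{1/2}.
\]
The second factor equals $(|u_n-u|_2^2+|u_n-u|_*^2)^{1/2}$, which is bounded uniformly in $n$ because $\{u_n-u\}$ is bounded in $E$, while the first factor is the tail of the convergent integral $|z|_2^2+|z|_*^2$ and hence is below $\eps$ for $R$ large. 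On the bounded region the pointwise bound gives $|G_n(y)|\le C(1+\log(1+R))=:C_R$, so
\[
\int_{B_R(0)}|G_n||z||u_n-u|\,dy \le C_R\,|z|_2\,\Bigl(\int_{B_R(0)}(u_n-u)^2\,dy\Bigr)^{1/2} \to 0 \quad (n\to\infty),
\]
using the compact embedding $E\hookrightarrow L^2(\R^2)$ of Lemma \ref{lem 2.2}(i), which yields $u_n\to u$ strongly in $L^2$. A standard $\eps/2$ argument finishes: first fix $R$ so that the far-field term stays below $\eps$ uniformly in $n$, then let $n\to\infty$ to annihilate the bounded-region term, obtaining $\limsup_n |B_1(v_n w_n, z(u_n-u))| \le \eps$ for every $\eps>0$.
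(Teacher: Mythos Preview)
The paper does not supply its own proof of this lemma; it is quoted verbatim from \cite[Lemma~2.6]{Cingolani-Weth-2016} without argument. Your proof is correct and self-contained: the Fubini step is justified by the product estimate displayed just before \eqref{eq 2.1}, the uniform pointwise bound $|G_n(y)|\le C(1+\log(1+|y|))$ follows exactly as you wrote, and the splitting into $B_R(0)$ and its complement cleanly separates the two competing effects---tail integrability of $(1+\log(1+|\cdot|))z^2$ for the far field, and strong $L^2$-convergence $u_n\to u$ from the compact embedding of Lemma~\ref{lem 2.2}(i) on the bounded part. Your observation that the naive route via the product bound leaves an uncontrolled $|u_n-u|_*$ term, and that the splitting is precisely what circumvents this, is the right diagnosis.
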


\section{Proof of Theorem \ref{th 1.1}} \label{sec 3}

$ $
\indent
In this section, we will give the proof of Theorem \ref{th 1.1} on the existence of
ground state solutions for \eqref{eq 1.5} in the case where $ p\geq 4$. In the sequel,
we always assume that $ p\geq 4$. To seek a ground state solution of \eqref{eq 1.5},
we consider the Nehari manifold $\mathcal{N}$ defined in \eqref{eq 1.7}, that is,
\begin{equation*}
  \mathcal{N} = \left\{ u \in E \backslash \{0\} : \: I'(u)u = 0\right\}
  =\left\{u \in E \backslash \{0\} : \: \|u\|_V^2 + N_{0}(u) = b|u|^{p}_{p}\right\}.
\end{equation*}
It is easy to see that every nontrivial critical point of $I$ belongs to $\mathcal{N}$.
If $u\in \mathcal{N}$, then
\begin{equation*}
  I(u) = \frac{1}{4}\|u\|_V^2 + \left(\frac{b}{4} - \frac{b}{p}\right)|u|^{p}_{p},
\end{equation*}
and since $p \geq 4$, it follows that
\begin{equation}\label{eq 3.1}
  I(u) \geq \frac{1}{4}\|u\|_V^2 >0 \qquad \text{for $u \in \mathcal{N}$}.
\end{equation}
Then, we define
\begin{equation*}
  m = \inf_{u \in \mathcal{N}}I(u),
\end{equation*}
and we want to show that $m$ is attained by some $u \in \mathcal{N}$ which is
a critical point of $I$ in $E$, and so a ground state solution of \eqref{eq 1.5}.

To start with, we collect some basic properties of $\mathcal{N}$ and $I$.

\begin{lemma}\label{lem 3.1}
Let $u\in E \setminus\{0\}$, then the function $h_{u}:(0, \infty) \rightarrow \mathbb{R}$,
$h_{u}(t)=I(tu)$ is even and has the following properties.
\begin{enumerate}
  \item[\rm(i)] If
    \begin{equation}\label{eq 3.2}
      N_{0}(u)-b|u|^4_4<0 \quad \text{in case $p=4$} \qquad \text{and} \qquad
      N_{0}(u)<0 \ \, \text{or} \  \, b>0 \quad \text{in case $p>4$},
    \end{equation}
  then there exists a unique $t_u \in (0, \infty)$ such that $t_{u}u \in \mathcal{N}$
  and $I(t_{u}u)= \max\limits_{t > 0} I(tu)$. Moreover, $h'_u(t) > 0$ on $(0,  t_u)$
  and $h'_u(t) < 0$ on $(t_u, \infty)$, and $h_{u}(t) \rightarrow -\infty$ as $t \rightarrow \infty$.

  \item[\rm(ii)] If \eqref{eq 3.2} does not hold, then $h'_u(t) >0$ on $(0, \infty)$,
  and $h_{u}(t)\rightarrow \infty$ as $t\rightarrow \infty$.
\end{enumerate}
\end{lemma}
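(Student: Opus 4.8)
The plan is to reduce the whole statement to the explicit one‑variable analysis of $t \mapsto I(tu)$. Using the scaling of the three pieces of $I$ — namely $\|tu\|_V^2 = t^2\|u\|_V^2$, $N_0(tu) = t^4 N_0(u)$ (from the bilinearity of $B_0$) and $|tu|_p^p = |t|^p|u|_p^p$, all legitimate by Lemma \ref{lem 2.2} — I would write
\begin{equation*}
  h_u(t) = I(tu) = \frac{t^2}{2}\|u\|_V^2 + \frac{t^4}{4}N_0(u) - \frac{b}{p}|t|^p|u|_p^p .
\end{equation*}
Every power of $t$ here is carried by an even function (for $t\mapsto -t$ the factor $(tu)^2$ is unchanged and $|t|^p$ is even), so $h_u$ is even, which settles the first assertion. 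For $t>0$ I would differentiate and factor out one power of $t$,
\begin{equation*}
  h_u'(t) = t\Big(\|u\|_V^2 + t^2 N_0(u) - b t^{p-2}|u|_p^p\Big) =: t\,g(t),
\end{equation*}
so the sign of $h_u'(t)$ equals the sign of $g(t)$ on $(0,\infty)$, and dividing the Nehari identity $\|tu\|_V^2 + N_0(tu) = b|tu|_p^p$ by $t^2$ shows that $t u \in \mathcal{N}$ exactly when $g(t)=0$. Note $g(0)=\|u\|_V^2>0$.

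The heart of the argument is to prove that, under \eqref{eq 3.2}, $g$ has a unique zero $t_u$ in $(0,\infty)$ across which it changes sign from $+$ to $-$. When $p=4$ this is immediate, since $g(t) = \|u\|_V^2 + t^2\big(N_0(u)-b|u|_4^4\big)$ is affine in $t^2$ with negative slope by \eqref{eq 3.2}, hence strictly decreasing from $g(0)>0$ to $-\infty$. When $p>4$ I would split on the sign of $N_0(u)$. If $N_0(u)<0$, then both $t\mapsto t^2N_0(u)$ and $t\mapsto -bt^{p-2}|u|_p^p$ are nonincreasing with at least one strictly decreasing, so $g$ is again strictly decreasing, with $g(t)\to-\infty$. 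If $N_0(u)\geq 0$, then \eqref{eq 3.2} forces $b>0$, whence $g(t)\to-\infty$ because the term $-bt^{p-2}|u|_p^p$ dominates (here $p-2>2$), giving at least one zero; for uniqueness I would show $g'(t_0)<0$ at any zero $t_0$. Eliminating the local term via $bt_0^{p-2}|u|_p^p = \|u\|_V^2 + t_0^2 N_0(u)$, a short computation yields
\begin{equation*}
  g'(t_0) = \frac{1}{t_0}\Big[(4-p)\,t_0^2 N_0(u) - (p-2)\|u\|_V^2\Big] < 0 ,
\end{equation*}
since $p>4$ and $N_0(u)\geq 0$. Thus every zero of $g$ is a strict down‑crossing, and two of them would force an intervening up‑crossing, which is impossible; hence the zero is unique. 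In every case $g>0$ on $(0,t_u)$ and $g<0$ on $(t_u,\infty)$, so $h_u'>0$ on $(0,t_u)$ and $h_u'<0$ on $(t_u,\infty)$, and $t_u u\in\mathcal{N}$ realises $\max_{t>0}I(tu)$.

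Finally I would read off the behaviour at infinity and part (ii) directly from the closed form of $h_u$. When \eqref{eq 3.2} holds the leading term is negative — it is $\tfrac14 t^4(N_0(u)-b|u|_4^4)$ for $p=4$, $\tfrac14 t^4 N_0(u)$ for $p>4$ with $b=0$, and $-\tfrac{b}{p}t^p|u|_p^p$ for $p>4$ with $b>0$ — so $h_u(t)\to-\infty$. When \eqref{eq 3.2} fails, the reduced slope is nonnegative ($N_0(u)-b|u|_4^4\geq 0$ if $p=4$, and $N_0(u)\geq 0$ with $b=0$ if $p>4$), so $g(t)\geq\|u\|_V^2>0$ for all $t>0$, giving $h_u'>0$ on $(0,\infty)$ and $h_u(t)\to+\infty$. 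The only genuine obstacle is uniqueness of the critical point when $b>0$, $p>4$ and $N_0(u)>0$, where $g$ is not monotone; the down‑crossing computation above is precisely what resolves it, and that is the step I would write out most carefully.
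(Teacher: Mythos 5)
Your proposal is correct and follows exactly the paper's route: the paper's entire proof is the identity $h_u'(t)/t=\|u\|_V^2+t^2N_0(u)-bt^{p-2}|u|_p^p$ followed by ``the desired assertions follow,'' and your argument is precisely this computation with the omitted case analysis written out. The only part the paper leaves genuinely implicit --- uniqueness of the zero when $p>4$, $b>0$, $N_0(u)>0$, where $g$ is not monotone --- you settle correctly via the down-crossing estimate $g'(t_0)=t_0^{-1}\bigl[(4-p)t_0^2N_0(u)-(p-2)\|u\|_V^2\bigr]<0$ (equivalently one could note that $g(t)/t^2$ is strictly decreasing), so nothing is missing.
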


\begin{proof}
A simple computation shows
\begin{equation*}
  \frac{h'_{u}(t)}{t}=\|u\|_V^2 + t^2 N_0(u)- bt^{p-2}|u|^p_p \qquad \text{for $t>0$},
\end{equation*}
so that the desired assertions follow.
\end{proof}

By Lemma \ref{lem 3.1}, we immediately have the following corollary.

\begin{corollary}\label{coro 3.2}
The Nehari manifold $\mathcal{N}$ is not empty and the infimum of $I$ on $\mathcal{N}$
obeys the following minimax characterization:
\begin{equation*}
  \inf_{u\in \mathcal{N}} I(u) = \inf_{u \in E \backslash \{0\}} \sup_{t>0} I(tu).
\end{equation*}
\end{corollary}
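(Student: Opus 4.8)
The plan is to derive both the nonemptiness of $\mathcal{N}$ and the minimax identity directly from the fibre analysis in Lemma \ref{lem 3.1}. First I would establish that $\mathcal{N} \neq \emptyset$. For this I pick any single $u \in E \setminus \{0\}$ for which the condition \eqref{eq 3.2} holds; the cleanest choice is to observe that such a $u$ exists. When $b > 0$ and $p > 4$, \emph{every} $u \in E \setminus \{0\}$ satisfies \eqref{eq 3.2}, so this is immediate. When $b = 0$ (which forces $p = 4$ to be excluded by \eqref{eq 3.2} unless $N_0(u) < 0$, and for $p > 4$ requires $N_0(u) < 0$), or in the borderline case $p = 4$, I need a function with $N_0(u) < 0$; such functions exist because $N_0$ can be made negative by spreading mass out (the logarithmic kernel $\log|x-y|$ takes large negative values only near the diagonal, but a suitable dilation argument, using that $N_0(u_\lambda)$ contains a term $-\frac{1}{4\pi}\log\lambda \, |u|_2^4$ under the scaling $u_\lambda(x) = \lambda u(\lambda x)$ that preserves the $L^2$ norm, shows $N_0(u_\lambda) \to -\infty$ as $\lambda \to \infty$). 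For any such admissible $u$, Lemma \ref{lem 3.1}(i) supplies a unique $t_u > 0$ with $t_u u \in \mathcal{N}$, so $\mathcal{N} \neq \emptyset$.

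Next I would prove the minimax characterization
\begin{equation*}
  \inf_{u \in \mathcal{N}} I(u) = \inf_{u \in E \setminus \{0\}} \sup_{t > 0} I(tu).
\end{equation*}
The key point is the case split in Lemma \ref{lem 3.1}. For $u \in E \setminus \{0\}$ satisfying \eqref{eq 3.2}, part (i) gives that $\sup_{t > 0} I(tu) = I(t_u u)$ is attained at the unique point $t_u u \in \mathcal{N}$; moreover $I(tu) \to -\infty$ as $t \to \infty$, so the supremum is finite and equals the value of $I$ at the unique intersection of the fibre with $\mathcal{N}$. For $u$ \emph{not} satisfying \eqref{eq 3.2}, part (ii) gives $h_u'(t) > 0$ on all of $(0,\infty)$ and $I(tu) \to +\infty$, so $\sup_{t>0} I(tu) = +\infty$ and the fibre never meets $\mathcal{N}$. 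Therefore the inner supremum is finite precisely on the admissible fibres, and on those fibres it equals $I$ evaluated at the unique Nehari point.

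Putting these together yields both inequalities. For ``$\geq$'': every $u \in \mathcal{N}$ lies on some admissible fibre and realizes its supremum, so $I(u) = \sup_{t>0} I(tu) \geq \inf_{v \in E \setminus \{0\}} \sup_{t>0} I(tv)$, whence $\inf_{\mathcal{N}} I \geq \inf_v \sup_t I(tv)$. For ``$\leq$'': since fibres failing \eqref{eq 3.2} contribute $+\infty$ to the right-hand infimum, that infimum is unchanged if restricted to admissible fibres; on each admissible fibre $\sup_{t>0} I(tu) = I(t_u u)$ with $t_u u \in \mathcal{N}$, so $\inf_v \sup_t I(tv) = \inf_{\{u \,:\, \eqref{eq 3.2}\}} I(t_u u) \geq \inf_{\mathcal{N}} I$. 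I do not expect a serious obstacle here: the only point requiring care is confirming that every element of $\mathcal{N}$ sits on an admissible fibre (i.e.\ that membership in $\mathcal{N}$ forces \eqref{eq 3.2}), which follows because if \eqref{eq 3.2} fails then $h_u'(t) > 0$ for all $t$ and the Nehari equation $h_u'(t)/t = 0$ has no positive root, contradicting $u \in \mathcal{N}$.
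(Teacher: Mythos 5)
Your proposal is correct and takes essentially the same route as the paper: the paper derives Corollary 3.2 as an immediate consequence of the fibre analysis in Lemma \ref{lem 3.1}, exactly as you do, and your dilation argument merely supplies the detail (existence of some $u$ with $N_0(u)<0$, needed for nonemptiness when $b=0$ or $p=4$) that the paper leaves tacit. Two cosmetic slips, neither affecting the conclusion: the scaling $u_\lambda(x)=\lambda u(\lambda x)$ gives $N_0(u_\lambda)=N_0(u)-\frac{\log\lambda}{2\pi}|u|_2^4$ (constant $\frac{1}{2\pi}$, not $\frac{1}{4\pi}$), and letting $\lambda\to\infty$ \emph{concentrates} the mass rather than spreading it out, which is precisely why the negative part of the logarithmic kernel dominates and $N_0(u_\lambda)\to-\infty$.
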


Now we are in a position to state that $m > 0$.

\begin{lemma}\label{lem 3.3}
There results $m > 0$.
\end{lemma}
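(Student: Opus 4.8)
The plan is to upgrade the pointwise positivity already recorded in \eqref{eq 3.1} to a uniform positive lower bound. Since \eqref{eq 3.1} gives $I(u) \ge \tfrac14\|u\|_V^2$ for every $u \in \mathcal N$, it suffices to produce a constant $\delta > 0$ with $\|u\|_V \ge \delta$ for all $u \in \mathcal N$; then $m \ge \delta^2/4 > 0$ follows at once. In other words, the whole content of the lemma is to show that $\|u\|_V$ cannot degenerate to zero along the Nehari manifold.

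To find such a $\delta$, I would exploit the Nehari constraint $\|u\|_V^2 + N_0(u) = b|u|_p^p$ together with the decomposition $N_0 = N_1 - N_2$. The key observation is that $N_1(u) \ge 0$, because the kernel $\log(1+|x-y|)$ is nonnegative, so this positive and hard-to-control term may simply be discarded from below: $N_0(u) = N_1(u) - N_2(u) \ge -N_2(u)$. The remaining piece is tamed by the Hardy--Littlewood--Sobolev estimate \eqref{eq 2.2}, which yields $N_2(u) \le C_0|u|_{8/3}^4$. Inserting these into the constraint gives
\[
  \|u\|_V^2 \le b|u|_p^p + C_0|u|_{8/3}^4 .
\]
I would then convert the right-hand side into powers of $\|u\|_V$ using the continuous embeddings $H_V \hookrightarrow L^s(\R^2)$ from Lemma \ref{lem 2.1} (so that $|u|_s \le C_s\|u\|_V$ for each $s \ge 2$), obtaining
\[
  \|u\|_V^2 \le bC_p^p\|u\|_V^p + C_0C_{8/3}^4\|u\|_V^4 .
\]
Since $u \ne 0$ and $V_0 > 0$ force $\|u\|_V > 0$, dividing by $\|u\|_V^2$ leaves
\[
  1 \le bC_p^p\|u\|_V^{p-2} + C_0C_{8/3}^4\|u\|_V^2 .
\]
Here the hypothesis $p \ge 4$ enters twice: it is what makes \eqref{eq 3.1} valid, and it guarantees that both exponents $p-2$ and $2$ on the right are strictly positive. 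Hence the right-hand side tends to $0$ as $\|u\|_V \to 0^+$, and the inequality forces $\|u\|_V \ge \delta$ uniformly on $\mathcal N$ for some $\delta > 0$. The degenerate case $b = 0$ is covered automatically, since there the constraint reads $N_0(u) = -\|u\|_V^2$ and only the $N_2$ term survives the same computation.

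The argument is essentially direct, and the one place requiring care is the sign-indefinite nonlocal term $N_0$, which cannot be bounded from below by any fixed power of $\|u\|_V$. The device that resolves this is precisely the splitting $N_0 = N_1 - N_2$: throwing away $N_1 \ge 0$ and controlling only $N_2$ via \eqref{eq 2.2} is exactly what makes the superquadratic exponents $p$ and $4$ surface on the right-hand side, and it is the positivity of these exponents that drives $\|u\|_V$ away from zero. I do not anticipate any genuine obstacle beyond the bookkeeping of embedding constants.
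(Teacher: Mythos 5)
Your proposal is correct and is essentially the paper's own proof: the same splitting $N_0 = N_1 - N_2$ with $N_1 \geq 0$ discarded, the same bound $N_2(u) \leq C_0|u|_{8/3}^4$ from \eqref{eq 2.2} combined with the Sobolev embeddings to get $\|u\|_V^2 \leq C_1\|u\|_V^p + C_2\|u\|_V^4$ on $\mathcal{N}$, and the same conclusion via \eqref{eq 3.1} that $m \geq \tau/4 > 0$ where $\tau = \inf_{\mathcal{N}}\|u\|_V^2$. One cosmetic remark: for the lower bound $\tau > 0$ the paper only invokes $p > 2$ (so that the exponent $p-2$ is positive), whereas you attribute this to $p \geq 4$; the stronger hypothesis is needed only for \eqref{eq 3.1}, not for this step, though this does not affect the validity of your argument.
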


\begin{proof}
By \eqref{eq 2.2} and the Sobolev inequalities, we obtain, for every $u\in \mathcal{N}$,
\begin{equation*}
  \|u\|_V^2 = b|u|^p_p - N_0(u) \leq b|u|^p_p + N_2(u) \leq C_1\|u\|_V^p + C_2\|u\|_V^4.
\end{equation*}
Since $u \neq 0$ and $p>2$, we then have
\begin{equation*}
  \tau := \inf_{u\in \mathcal{N}} \|u\|_V^2 >0.
\end{equation*}
Using \eqref{eq 3.1}, we therefore conclude that
\begin{equation}\label{eq 3.3}
  m = \inf_{u \in \mathcal{N}}I(u) \geq \frac{1}{4} \inf_{u\in \mathcal{N}} \|u\|_V^2
    = \frac{\tau}{4}  >0,
\end{equation}
as claimed.
\end{proof}

Next, we will prove that $m>0$ is attained and every minimizer of $m$ is a critical point of $I$.

\begin{proposition}\label{prop 3.4}
The level $m$ is achieved, and every minimizer of $m$ is a critical point of $I$ in $E$.
\end{proposition}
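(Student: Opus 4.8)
The plan is to run the direct method on the Nehari manifold, with the planar-specific care needed to upgrade boundedness from $H_V$ to the full space $E$ and to exclude vanishing of the weak limit. First I would take a minimizing sequence $\{u_n\} \subset \mathcal{N}$ with $I(u_n) \to m$. Since $p \geq 4$, on $\mathcal{N}$ we have $I(u_n) = \frac14\|u_n\|_V^2 + \bigl(\frac b4 - \frac bp\bigr)|u_n|_p^p \geq \frac14\|u_n\|_V^2$, so $\{u_n\}$ is bounded in $H_V$; by Lemma \ref{lem 2.1} a subsequence converges to some $u$ strongly in every $L^s$, $s \geq 2$, and a.e. To see $u \neq 0$ I would use Lemma \ref{lem 3.3}: if $u = 0$, then $|u_n|_p \to 0$ and $N_2(u_n) \to 0$ by \eqref{eq 2.2}, so the constraint gives $\|u_n\|_V^2 = b|u_n|_p^p - N_0(u_n) \leq b|u_n|_p^p + N_2(u_n) \to 0$ (using $N_0 = N_1 - N_2 \geq -N_2$), contradicting $\|u_n\|_V^2 \geq \tau > 0$.

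Next I would upgrade to boundedness in $E$, which is the step where the logarithmic kernel matters. On $\mathcal{N}$ the quantity $N_0(u_n) = b|u_n|_p^p - \|u_n\|_V^2$ is bounded, and $N_2(u_n)$ is bounded by \eqref{eq 2.2}, so $N_1(u_n) = N_0(u_n) + N_2(u_n)$ is bounded. Applying Lemma \ref{lem 2.3} with $v_n = u_n$ (legitimate since $u_n \to u \neq 0$ a.e.) then yields $\sup_n |u_n|_* < \infty$. Hence $\{u_n\}$ is bounded in $E$ and, along a further subsequence, $u_n \rightharpoonup u$ weakly in $E$.

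To identify the limit I would compare energies through the truncated functional $G := I - \frac14 I'(\cdot)(\cdot)$, that is $G(u) = \frac14\|u\|_V^2 + \bigl(\frac b4 - \frac bp\bigr)|u|_p^p$, which is weakly lower semicontinuous on $E$ (the $\|\cdot\|_V^2$-term is, while $|u_n|_p^p \to |u|_p^p$ by compactness, with a nonnegative coefficient because $p \geq 4$); thus $G(u) \leq \liminf G(u_n) = m$. Passing to the limit in the constraint, using weak lower semicontinuity of $N_1$ (Lemma \ref{lem 2.2}(iv)) and continuity of $N_2$ on $L^{8/3}$ (Lemma \ref{lem 2.2}(iii)) to get $N_0(u) \leq \liminf N_0(u_n)$, I obtain $\|u\|_V^2 + N_0(u) \leq b|u|_p^p$, i.e. $h_u'(1) \leq 0$. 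By Lemma \ref{lem 3.1} this forces condition \eqref{eq 3.2} for $u$ and the existence of a unique $t_u \in (0,1]$ with $t_u u \in \mathcal{N}$. Since $t \mapsto G(tu)$ is strictly increasing and $t_u \leq 1$ with $p \geq 4$, I get $m \leq I(t_u u) = G(t_u u) \leq G(u) \leq m$, so every inequality is an equality and strict monotonicity forces $t_u = 1$. Therefore $u \in \mathcal{N}$ and $I(u) = m$, so $m$ is attained.

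Finally, for the criticality statement I would argue by Lagrange multipliers. Writing $J_0(u) := I'(u)u$, a direct computation shows that on $\mathcal{N}$ one has $J_0'(u)u = -2\|u\|_V^2 + (4-p)b|u|_p^p < 0$ because $p \geq 4$; hence $0$ is a regular value of $J_0$ and $\mathcal{N}$ is a $C^1$ manifold near any minimizer $\bar u$. Constrained minimality then gives $I'(\bar u) = \lambda J_0'(\bar u)$ for some $\lambda \in \R$, and testing with $\bar u$, using $I'(\bar u)\bar u = 0$ together with $J_0'(\bar u)\bar u \neq 0$, forces $\lambda = 0$, so $I'(\bar u) = 0$. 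The main obstacle throughout is the planar-specific handling of the convolution energy, namely securing the uniform bound on $|u_n|_*$ and the correct semicontinuity of the sign-changing term $N_0 = N_1 - N_2$; this is precisely where Lemmas \ref{lem 2.2} and \ref{lem 2.3} are indispensable, whereas the rest is the standard Nehari scheme.
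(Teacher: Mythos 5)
Your proposal is correct and follows essentially the same route as the paper: a minimizing sequence on $\mathcal{N}$, the compact embedding of Lemma \ref{lem 2.1} plus Lemma \ref{lem 2.3} to exclude vanishing and to bound $|u_n|_*$, and then the fiber-map comparison through $G(u)=\frac14\|u\|_V^2+\bigl(\frac b4-\frac bp\bigr)|u|_p^p$ (the paper runs the identical comparison, phrased as a contradiction when $I'(u)u<0$). Your criticality step via Lagrange multipliers, resting on $J_0'(\bar u)\bar u=-2\|\bar u\|_V^2+(4-p)b|\bar u|_p^p<0$, is just a reformulation of the paper's implicit-function-theorem argument, which uses exactly the same nondegeneracy quantity $\frac{\partial\varphi}{\partial t}(0,1)$.
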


\begin{proof}
In the following, we divide the proof into two steps.

\emph{Step 1}.  We show that $m$ can be attained. Let $\{u_n\} \subset \mathcal{N}$
be a minimizing sequence for $I$, that is, $I(u_n) \rightarrow m$ as $n \rightarrow\infty$.
By \eqref{eq 3.1}, we know that $\{u_n\}$ is bounded in $H_V$.
Applying the compactness of Lemma \ref{lem 2.1}, up to a subsequence,
there exists $u \in H_V$ such that
\begin{equation}\label{eq 3.4}
  u_n \rightharpoonup u \quad \text{in} \ H_V, \quad
  u_n \rightarrow u \quad \text{in}\ L^s(\R^{2})\ \text{for all}\ s \geq 2, \quad
  u_n(x)\rightarrow u(x) \quad \text{a.e. in}\ \R^2.
\end{equation}
We claim that $u \neq 0$. Arguing by contradiction, we assume that $u=0$. 
It then follows from \eqref{eq 2.2} and \eqref{eq 3.4} that
\begin{equation*}
 \|u_n\|_V^2 + N_1(u_n) = b|u_n|^p_p + N_2(u_n) \leq b|u_n|^p_p + C_0 |u_n|^4_{\frac{8}{3}} = o(1),
\end{equation*}
which means that $\|u_n\|_V \rightarrow 0$ and $N_1(u_n)\rightarrow 0$, so that
\begin{equation*}
  I(u_n) = \frac{1}{2}\|u_n\|_V^2 + \frac{1}{4} \bigl[N_1(u_n) - N_2(u_n)\bigr] - \frac{b}{p}|u_n|^p_p \to 0
  \qquad \text{as $n\rightarrow \infty$.}
\end{equation*}
This gives that $m =0$, which contradicts Lemma \ref{lem 3.3}. So $u \neq 0$, as claimed.

Since $\{u_n\} \subset \mathcal{N}$,  it holds that
\begin{equation*}
  B_1\left(u_n^2, u_n^2\right) = N_1(u_n) =  N_2(u_n) + b|u_n|_p^p - \|u_n\|_V^2,
\end{equation*}
which means that $\sup_{n \in \mathbb{N}} B_1\left(u_n^2, u_n^2\right) < \infty$ due to the boundedness
of $\{\|u_n\|_V\}$. Thus, $|u_n|_\ast$ remains bounded in $n$ by Lemma \ref{lem 2.3}, and therefore
$\{u_n\}$ is bounded in $E$.  Then, passing to a subsequence if necessary, we may assume that
$u_{n} \rightharpoonup u$ in $E$, so that $u \in E$. By the weak lower semicontinuity of the norm,
we further conclude from \eqref{eq 3.4} and Lemma \ref{lem 2.2}(iv) that
\begin{equation}\label{eq 3.5}
  I(u) \leq \liminf_{n \rightarrow \infty} I(u_n) = m,
\end{equation}
and
\begin{equation}\label{eq 3.6}
  \|u\|_V^2 + N_1(u) \leq  N_2(u) + b|u|_p^p.
\end{equation}
If $\|u\|_V^2 + N_1(u) = N_2(u) + b|u|_p^p$, then $u \in \mathcal{N}$, and hence \eqref{eq 3.5} yields that $m$
is achieved at $u$. Since \eqref{eq 3.6} occurs, we only need to deal with the case where
\begin{equation}\label{eq 3.7}
  \|u\|_V^2 + N_1(u) < N_2(u) + b|u|_p^p.
\end{equation}
We now prove that if this happens, it leads to a contradiction. Indeed, it follows from Lemma \ref{lem 3.1}
and \eqref{eq 3.7} that there exists a unique $t \in (0,1)$ such that $tu \in \mathcal{N}$, so that
\begin{align*}
  m&\leq I(tu)=\frac{1}{4} t^2\|u\|_V^2 + \left(\frac{b}{4}-\frac{b}{p}\right)t^p|u|^p_p\\
   &< \frac{1}{4}\|u\|_V^2+ \left(\frac{b}{4}-\frac{b}{p}\right)|u|^p_p\\
   &\leq \liminf_{n \rightarrow \infty}\left[\frac{1}{4}\|u_n\|_V^2 + \left(\frac{b}{4}-\frac{b}{p}\right)|u_n|_p^p\right]\\
   &=\liminf_{n\rightarrow \infty} I(u_n) = m.
\end{align*}
This is impossible, and thus Step 1 is finished.

\emph{Step 2}. We show that every minimizer of $m$ is a critical point of $I$ in $E$.
Let $u$ be an arbitrary minimizer for $m$. We try to prove that $I'(u) v =0$ for all $v \in E$,
and so $u$ is a critical point of $I$.

For every $v \in E$, there exists $\varepsilon >0$ such that $u+s\upsilon \neq 0$ for all $s\in(-\varepsilon, \varepsilon)$.
Define a function $ \varphi: \: (-\varepsilon, \varepsilon) \times (0, \infty) \rightarrow \R$ by
\begin{equation*}
  \varphi(s, t)= t^2\|u + s\upsilon\|_V^2 + t^4 N_0(u + s\upsilon) - bt^p|u + s\upsilon|_p^p.
\end{equation*}
Since $u \in \mathcal{N}$, we see that $\varphi(0,1)=0$. Moreover, $\varphi$ is a $C^1$-function and
\begin{equation*}
  \frac{\partial \varphi}{\partial t}(0, 1) = -2\|u\|_V^2 + (4-p)b|u|^p_p <0.
\end{equation*}
Then, by the implicit function theorem, for $\varepsilon$ small we can determine a $C^1$-function
$t:\:(-\varepsilon, \varepsilon) \rightarrow \mathbb{R}$ such that $t(0)=1$ and
\begin{equation*}
  \varphi(s,t(s))=0 \qquad \text{for all} \ s \in (-\varepsilon, \varepsilon).
\end{equation*}
This also implies that $t(s) > 0$, at least for $\varepsilon$ very small, so that $t(s)(u+sv) \in \mathcal{N}$.
Let us now define $\gamma:\: (-\varepsilon,  \varepsilon) \rightarrow \R$ by
\begin{equation*}
  \gamma(s)=I\left(t(s)(u+s\upsilon)\right),
\end{equation*}
we find that the function $\gamma$ is differentiable and has a minimum point at $s=0$, and therefore
\begin{equation*}
  0 = \gamma'(0) = I'\left(t(0)u\right)\left(t'(0)u+t(0)v\right)
    = t'(0)I'(u)u + I'(u)v =I'(u)v.
\end{equation*}
Since this holds for all $v \in E$, we have $I'(u)=0$. So Step 2 follows.
\end{proof}

The following lemma is the final step in the proof of Theorem \ref{th 1.1}.

\begin{lemma}\label{lem 3.5}
Suppose moreover that $V$ is locally H\"{o}lder continuous. Then, every minimizer $u$
of $m$ is a classical solution of \eqref{eq 1.5} that does not change sign in $\R^2$.
\end{lemma}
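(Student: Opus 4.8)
The plan is to handle the two assertions separately: first the sign property, which reduces to a symmetry observation about the functional together with the strong maximum principle, and then the $C^2$-regularity, which is an elliptic bootstrap whose only delicate point is the nonlocal potential. Since Proposition \ref{prop 3.4} already tells us that a minimizer $u$ of $m$ is a (weak) critical point of $I$, both remaining tasks are statements about weak solutions of \eqref{eq 1.5}.

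For the sign, the key remark is that $I(|u|)=I(u)$: indeed $|\nabla|u||=|\nabla u|$ a.e., while $N_0(|u|)=N_0(u)$ and $|\,|u|\,|_p^p=|u|_p^p$ depend only on $u^2$; the same invariance shows $|u|\in\mathcal{N}$ whenever $u\in\mathcal{N}$. Hence $|u|$ is again a minimizer of $I|_{\mathcal{N}}$, and by Proposition \ref{prop 3.4} it is a critical point of $I$, i.e. a nonnegative weak solution $w:=|u|\geq 0$ of \eqref{eq 1.5}. Writing the equation for $w$ in the form $-\Delta w+c(x)w=bw^{p-1}\geq 0$ with $c(x):=V(x)+\tfrac{1}{2\pi}\bigl(\log(|\cdot|)\ast w^2\bigr)(x)$, I intend to apply the strong maximum principle: on every ball one absorbs $c$ by adding a large constant (making the zeroth-order coefficient nonnegative), so that $w\equiv 0$ or $w>0$ throughout the connected set $\R^2$; since $w\not\equiv 0$ we get $|u|>0$ everywhere. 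Granting that $u$ is continuous (from the regularity step below), $u$ then never vanishes on $\R^2$ and therefore has constant sign.

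For regularity, I would bootstrap from the weak formulation
\[
-\Delta u = b|u|^{p-2}u - V(x)u - \Psi(x)u =: g(x), \qquad \Psi(x):=\tfrac{1}{2\pi}\bigl(\log(|\cdot|)\ast u^2\bigr)(x).
\]
By Lemma \ref{lem 2.1}, $u\in L^s(\R^2)$ for every $s\in[2,\infty)$. The crucial preliminary is that $\Psi$ is locally bounded and continuous; here I use the splitting $\log r=\log(1+r)-\log(1+1/r)$ underlying the decomposition $B_0=B_1-B_2$: the $B_1$-part is controlled pointwise by $\log(1+|x|)\,|u|_2^2+|u|_*^2$ and is locally bounded in $x$, while the $B_2$-part involves only the mildly singular kernel $\log(1+1/|x-y|)\leq |x-y|^{-1}$, which lies in $L^q_{\mathrm{loc}}$ for some $q\in(1,2)$ and is harmless against $u^2$ by Hölder's inequality, cf. \eqref{eq 2.1}--\eqref{eq 2.2}. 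Consequently $g\in L^q_{\mathrm{loc}}(\R^2)$ for all $q<\infty$ (using that $V$ is continuous, hence locally bounded), and Calderón--Zygmund theory gives $u\in W^{2,q}_{\mathrm{loc}}$, hence $u\in C^{1,\alpha}_{\mathrm{loc}}$ by Sobolev embedding for $q>2$. Feeding this back, $u^2\in C^{1,\alpha}_{\mathrm{loc}}$ improves $\Psi$ to a locally Hölder continuous function; since $V$ is assumed locally Hölder, $g\in C^{\alpha}_{\mathrm{loc}}$, and Schauder estimates upgrade $u$ to $C^{2,\alpha}_{\mathrm{loc}}\subset C^2(\R^2)$. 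This also renders $c=V+\Psi$ continuous, legitimizing the maximum-principle step above.

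The main obstacle is precisely the regularity of the nonlocal potential $\Psi$: because the logarithmic kernel is sign-changing and singular both at the origin and at infinity, it cannot be treated as a bounded multiplier directly. The $B_1/B_2$ decomposition, combined with the a priori $\log$-integrability encoded in $|\cdot|_*$ and the Hardy--Littlewood--Sobolev bound \eqref{eq 2.1}, is exactly what is needed to close first the local boundedness and continuity, and then the local Hölder estimate, that the bootstrap requires. Once $\Psi$ is under control, the elliptic bootstrap and the strong maximum principle are routine, and the two conclusions of the lemma follow.
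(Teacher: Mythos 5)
Your proposal is correct and follows essentially the same route as the paper: pass to $|u|$, which lies in $\mathcal{N}$ with $I(|u|)=I(u)$ and is therefore a critical point by Proposition \ref{prop 3.4}, then apply elliptic regularity and the strong maximum principle, the only difference being that the paper outsources the bootstrap for the logarithmic convolution term to \cite[Proposition 2.3]{Cingolani-Weth-2016} while you spell it out via the $B_1$/$B_2$ splitting and the Hardy--Littlewood--Sobolev bound \eqref{eq 2.1}. One cosmetic remark: in your feedback step, local boundedness of $u^2$ already gives $\Psi\in C^{1,\alpha}_{\mathrm{loc}}$ (since $\Delta\Psi=u^{2}$ distributionally, interior $W^{2,q}$ estimates apply for every $q<\infty$), so invoking $u^2\in C^{1,\alpha}_{\mathrm{loc}}$ is more than is needed.
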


\begin{proof}
Since $u \in \mathcal N$ is a minimizer of $I|_{\mathcal N}$, we see that
$|u|$ is also a minimizer of $I|_{\mathcal N}$ due to the fact that
$|u| \in \mathcal{N}$ and $I(u)=I(|u|)$.
Hence, $|u|$ is a critical point of $I$ by Proposition \ref{prop 3.4}.
Using the standard elliptic regularity theory, we have
$|u| \in C^{2}(\mathbb{R}^{2})$ and $-\Delta |u| + q(x) |u| = 0$ in $\mathbb{R}^{2}$
with some function $q(x) \in L_{loc}^{\infty}(\mathbb{R}^{2})$,
see \cite[Proposition 2.3]{Cingolani-Weth-2016} for the details of a similar proof.
Consequently, the strong maximum principle and the fact that $u \neq 0$ imply that
$|u|>0$ in $\mathbb{R}^{2}$, so that $u$ is a classical solution of \eqref{eq 1.5}
and does not change sign.
\end{proof}

The {\em proof of Theorem \ref{th 1.1}} is now completed by merely combining
Corollary \ref{coro 3.2}, Proposition \ref{prop 3.4} and Lemma \ref{lem 3.5}.

\section{Proof of Theorem \ref{th 1.2}} \label{sec 4}

$ $
\indent
In this section, we are devoted to the proof of Theorem \ref{th 1.2} on the existence
of ground state solutions for \eqref{eq 1.5} in the case where $2 < p <4$.
For this purpose, we shall first show the existence of nontrivial solutions
for \eqref{eq 1.5}. Within this step, we employ the following abstract result
due to Jeanjean \cite{Jeanjean-1999}.

\begin{proposition}[\hspace{-0.05ex}{\cite[Theorem 1.1]{Jeanjean-1999}}]\label{prop 4.1}
Let $(H, \, \| \cdot \|_H)$ be a Banach space and $\Lambda \subset \mathbb{R}^{+}$ be an interval.
Consider a family $\{\Phi_{\lambda}\}_{\lambda \in \Lambda}$ of $C^1$-functionals on $H$ of the form
\begin{equation*}
  \Phi_{\lambda}(u) = A(u) - \lambda B(u) \qquad  \text{for} \ \lambda \in \Lambda,
\end{equation*}
where $B(u) \geq 0$ for all $u \in H $, and either $A(u)\rightarrow \infty$
or $B(u)\rightarrow \infty$ as $\|u\|_H \rightarrow \infty$.
Assume that for every $ \lambda \in \Lambda$, there are two points $v_1, v_2$ in $H$ such that
\begin{equation*}
c_{\lambda} := \inf \limits_{\gamma \in \Gamma} \max \limits_{t\in [0,1]} \Phi_{\lambda}(\gamma(t))
 > \max \left\{\Phi_{\lambda}(v_1),\Phi_{\lambda}(v_2)\right\},
\end{equation*}
where
\begin{equation*}
  \Gamma = \left\{\gamma \in C([0,1], H): \: \gamma(0)= v_1 , \ \gamma(1)= v_2\right\}.
\end{equation*}
Then for almost every $\lambda \in \Lambda$, there is a bounded $({\rm PS})_{c_{\lambda}}$ sequence
for $\Phi_{\lambda}$, that is, there exists a sequence $\{u_n(\lambda)\} \subset H$ such that
\begin{enumerate}
  \item[\rm(i)] $\{u_n(\lambda)\}$ is bounded in $H$;

  \item[\rm(ii)] $\Phi_{\lambda}(u_n(\lambda)) \rightarrow c_{\lambda}$;

  \item[\rm(iii)] $\Phi'_{\lambda}(u_n(\lambda)) \rightarrow 0$ in $H'$.
\end{enumerate}
\end{proposition}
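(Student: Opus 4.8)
The statement is Jeanjean's monotonicity trick, so the plan is to exploit the monotone dependence of the minimax value $c_\lambda$ on the parameter $\lambda$, and then to extract a bounded sequence at the almost-every $\lambda$ where $c_\lambda$ is differentiable. First I would record the monotonicity: since $B(u) \geq 0$, for $\lambda_1 \leq \lambda_2$ we have $\Phi_{\lambda_2}(u) \leq \Phi_{\lambda_1}(u)$ pointwise, hence $\max_{t}\Phi_{\lambda_2}(\gamma(t)) \leq \max_{t}\Phi_{\lambda_1}(\gamma(t))$ along every admissible path, which gives $c_{\lambda_2} \leq c_{\lambda_1}$. Thus $\lambda \mapsto c_\lambda$ is non-increasing on $\Lambda$, and by the Lebesgue differentiation theorem for monotone functions it is differentiable at almost every $\lambda \in \Lambda$. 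It therefore suffices to produce a bounded $(\mathrm{PS})_{c_\lambda}$ sequence at each such differentiability point $\lambda$.

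Fix a differentiability point $\lambda$, choose $\lambda_n \nearrow \lambda$, and for each $n$ select a near-optimal path $\gamma_n \in \Gamma$ with $\max_{t \in [0,1]} \Phi_{\lambda_n}(\gamma_n(t)) \leq c_{\lambda_n} + (\lambda - \lambda_n)$. The crucial localization step is to show that the points $u = \gamma_n(t)$ at which $\Phi_\lambda(\gamma_n(t))$ is close to $c_\lambda$ stay in a norm-bounded region, uniformly in $n$. This follows from the elementary identity $\Phi_{\lambda_n}(u) - \Phi_\lambda(u) = (\lambda - \lambda_n) B(u)$ together with the two bounds $\Phi_{\lambda_n}(u) \leq c_{\lambda_n} + (\lambda - \lambda_n)$ and $\Phi_\lambda(u) \geq c_\lambda - (\lambda - \lambda_n)$, which yield
\[
  B(u) \;\leq\; \frac{c_{\lambda_n} - c_\lambda}{\lambda - \lambda_n} + 2,
\]
whose right-hand side converges to $-c'(\lambda) + 2 < \infty$. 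Hence $B(u)$ is uniformly bounded on these near-maximal points, and then $A(u) = \Phi_\lambda(u) + \lambda B(u)$ is bounded as well; by the coercivity hypothesis that $A(u) \to \infty$ or $B(u) \to \infty$ as $\|u\|_H \to \infty$, the norm $\|u\|_H$ stays bounded by some constant $K(\lambda)$.

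Having confined the near-maximal points to a fixed ball $\{\|u\|_H \leq K(\lambda)\}$, I would then run a deformation argument to extract the bounded Palais--Smale sequence. The idea is that if $\Phi_\lambda$ admitted no point with $\Phi_\lambda$ near $c_\lambda$ and $\|\Phi'_\lambda\|_{H'}$ small inside the slightly larger set $\{\|u\|_H \leq K(\lambda)+1\}$, then a pseudo-gradient flow supported on this set could be used to push the maximum of $\Phi_\lambda$ along the near-optimal paths $\gamma_n$ strictly below $c_\lambda$; since the endpoints $v_1, v_2$ satisfy $\Phi_\lambda(v_i) < c_\lambda$ by hypothesis, the deformation leaves them fixed, so the deformed paths remain admissible and contradict the minimax definition of $c_\lambda$. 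Carrying this out along $\lambda_n \to \lambda$ produces points $u_n$ with $\|u_n\|_H \leq K(\lambda)+1$, $\Phi_\lambda(u_n) \to c_\lambda$ and $\Phi'_\lambda(u_n) \to 0$ in $H'$, which is exactly (i)--(iii).

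I expect the third step --- the quantitative deformation that actually extracts the bounded $(\mathrm{PS})$ sequence --- to be the main obstacle, since it requires a deformation lemma carefully adapted to the localization of Step~2 and a simultaneous bookkeeping of the two small parameters $\lambda - \lambda_n$ and the energy window around $c_\lambda$. By contrast, the genuinely new mechanism over the classical mountain pass theorem lies entirely in the monotonicity and almost-everywhere differentiability of $c_\lambda$, which is what converts a priori \emph{unbounded} minimax sequences into \emph{bounded} ones for almost every $\lambda \in \Lambda$.
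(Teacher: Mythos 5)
Your proposal is correct, but note that the paper itself gives no proof of this statement: it is quoted verbatim as \cite[Theorem 1.1]{Jeanjean-1999}, so the only ``proof'' in the paper is the citation. Your reconstruction --- monotonicity and a.e.\ differentiability of $\lambda \mapsto c_\lambda$, the difference-quotient bound $B(u) \leq \frac{c_{\lambda_n}-c_\lambda}{\lambda-\lambda_n}+2$ localizing near-maximal points in a ball via the coercivity of $(A,B)$, and a localized deformation keeping the endpoints fixed since $\Phi_\lambda(v_i)<c_\lambda$ --- is exactly Jeanjean's original argument, so it matches the source the paper relies on.
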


Some useful properties of $c_\lambda$ are contained in the following lemma from \cite{Jeanjean-1999}.
\begin{lemma}[\hspace{-0.05ex}{\cite[Lemma 2.3]{Jeanjean-1999}}]\label{lem 4.2}
Under the assumptions of Proposition \ref{prop 4.1}, the map $\lambda  \rightarrow c_\lambda$
is non-increasing and left-continuous.
\end{lemma}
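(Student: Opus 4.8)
The plan is to prove the two assertions separately, both exploiting the sign condition $B(u) \geq 0$ and the fact that the admissible path class $\Gamma$ is the same for every $\lambda$ (since the endpoints $v_1, v_2$ are kept fixed). First I would establish that $\lambda \mapsto c_\lambda$ is non-increasing. The crucial observation is that for $\lambda_1 \leq \lambda_2$ in $\Lambda$ and any $u \in H$, the sign condition forces $\Phi_{\lambda_2}(u) = A(u) - \lambda_2 B(u) \leq A(u) - \lambda_1 B(u) = \Phi_{\lambda_1}(u)$. Hence for each fixed $\gamma \in \Gamma$ one has $\max_{t \in [0,1]} \Phi_{\lambda_2}(\gamma(t)) \leq \max_{t \in [0,1]} \Phi_{\lambda_1}(\gamma(t))$, and taking the infimum over $\gamma \in \Gamma$ yields $c_{\lambda_2} \leq c_{\lambda_1}$, which is exactly the monotonicity claim.

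Next I would turn to left-continuity at an arbitrary $\lambda_0 \in \Lambda$. Because $c_\lambda$ is non-increasing, the one-sided limit exists and satisfies $\lim_{\lambda \to \lambda_0^-} c_\lambda \geq c_{\lambda_0}$, so only the reverse inequality remains. Fixing $\varepsilon > 0$, I would select an $\varepsilon$-almost-optimal path $\gamma \in \Gamma$ for $\lambda_0$, that is, $\max_{t \in [0,1]} \Phi_{\lambda_0}(\gamma(t)) \leq c_{\lambda_0} + \varepsilon$. For $\lambda < \lambda_0$ the elementary identity $\Phi_\lambda(\gamma(t)) = \Phi_{\lambda_0}(\gamma(t)) + (\lambda_0 - \lambda) B(\gamma(t))$, combined with the bound $M := \max_{t \in [0,1]} B(\gamma(t)) < \infty$, gives $c_\lambda \leq \max_{t \in [0,1]} \Phi_\lambda(\gamma(t)) \leq c_{\lambda_0} + \varepsilon + (\lambda_0 - \lambda) M$. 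Letting $\lambda \to \lambda_0^-$ and afterwards $\varepsilon \to 0$ produces $\limsup_{\lambda \to \lambda_0^-} c_\lambda \leq c_{\lambda_0}$, which closes the argument and establishes left-continuity.

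I do not expect any serious obstacle here, as the proof is essentially bookkeeping built on the pointwise monotonicity of $\lambda \mapsto \Phi_\lambda(u)$. The one point deserving a word of care is the finiteness of $M$: this rests on the continuity of $B$ (which holds since $B$ is of class $C^1$, being the difference $\frac{1}{\lambda'-\lambda''}\left(\Phi_{\lambda''} - \Phi_{\lambda'}\right)$ of two $C^1$ functionals) together with the compactness of the path image $\gamma([0,1])$. It is also worth emphasizing that the whole scheme depends on $\Gamma$ being independent of $\lambda$, so the endpoints $v_1, v_2$ must be held fixed throughout; with that convention in place, nothing further is needed.
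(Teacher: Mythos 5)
Your proof is correct and matches the standard argument: the paper itself offers no proof of this lemma, importing it directly from \cite[Lemma 2.3]{Jeanjean-1999}, and your two steps --- monotonicity from $B \geq 0$ together with the $\lambda$-independence of $\Gamma$, and left-continuity via an $\varepsilon$-optimal path for $\Phi_{\lambda_0}$ with the bound $M = \max_{t\in[0,1]} B(\gamma(t)) < \infty$ from continuity of $B$ on the compact path image --- are exactly the argument in that reference. Your caveat that the endpoints must be held fixed is well taken and is consistent with the paper's application, since Lemma \ref{lem 4.4} produces $v_0$ independent of $\lambda \in [1/2,1]$.
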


In the following, we always assume that $b > 0$.
As noted above, we shall apply Proposition \ref{prop 4.1} to establish the existence of nontrivial solutions
for \eqref{eq 1.5}. To this aim, inspired by \cite{Chen-2020}, we consider a family of functionals on $E$
defined as
\begin{equation*}
  I_{\lambda}(u) = \frac{1}{2} \|u\|_E^2 + \frac{1}{4} N_0(u)- \lambda \left(\frac{1}{2} |u|_{\ast}^2
   + \frac{b}{p} |u|_p^p \right),
\end{equation*}
for $\lambda \in [1/2, 1]$. Let
\begin{equation}\label{eq 4.1}
  A(u) = \frac{1}{2} \|u\|_E^2 + \frac{1}{4} N_0(u) \qquad \text{and} \qquad
  B(u) = \frac{1}{2} |u|_{\ast}^2 + \frac{b}{p} |u|_p^p,
\end{equation}
then $I_{\lambda}(u) = A(u) - \lambda B(u)$. We need the following lemma.

\begin{lemma}\label{lem 4.3}
We have $B(u) \geq 0$ for all $u \in E$, and either $A(u) \rightarrow \infty$
or $B(u) \rightarrow \infty$ as $\|u\|_E \rightarrow \infty$.
\end{lemma}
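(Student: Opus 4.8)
The plan is to verify the two assertions separately. First, the nonnegativity of $B$ is immediate: by definition $|u|_*^2 \geq 0$ and $|u|_p^p \geq 0$, and since $b > 0$ we have $B(u) = \frac{1}{2}|u|_*^2 + \frac{b}{p}|u|_p^p \geq 0$ for all $u \in E$. This requires no work.

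The substance of the lemma is the dichotomy: as $\|u\|_E \to \infty$, at least one of $A(u)$ or $B(u)$ blows up. Recall that $\|u\|_E^2 = \|u\|_V^2 + |u|_*^2$, so $A(u) = \frac{1}{2}\|u\|_V^2 + \frac{1}{2}|u|_*^2 + \frac{1}{4}N_0(u)$. The only term that can be negative or large in the wrong direction is the nonlocal term $N_0(u) = N_1(u) - N_2(u)$, since $N_1 \geq 0$ always but is subtracted nowhere, while $N_2 \geq 0$ enters $A$ with a negative sign through $N_0$. Thus I would write
\begin{equation*}
  A(u) = \frac{1}{2}\|u\|_V^2 + \frac{1}{2}|u|_*^2 + \frac{1}{4}N_1(u) - \frac{1}{4}N_2(u),
\end{equation*}
and the danger is entirely that the $-\frac{1}{4}N_2(u)$ term might overwhelm the positive contributions. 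Here the controlling estimate is \eqref{eq 2.2}, which gives $N_2(u) \leq C_0 |u|_{8/3}^4$, so $N_2$ is controlled by an $L^{8/3}$ norm. The strategy is to argue by contradiction or by cases on whether a minimizing-type sequence stays bounded in the relevant lower norms.

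The forward approach I would take: suppose $\|u\|_E \to \infty$ along some sequence but $B(u)$ stays bounded. Since $B(u)$ bounded forces both $|u|_*$ bounded and $|u|_p$ bounded (for whatever $p \in (2,4)$ we have fixed), and since by Lemma \ref{lem 2.2}(i) the embedding $E \hookrightarrow L^s(\R^2)$ is continuous for every $s \in [2,\infty)$, the quantity $|u|_{8/3}$ is controlled by $\|u\|_E$; but more usefully, if $|u|_*$ is bounded then $\|u\|_E \to \infty$ forces $\|u\|_V \to \infty$. The key point is then to show $N_2(u)$ cannot grow fast enough to cancel the growing $\frac{1}{2}\|u\|_V^2$ term. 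The main obstacle is precisely bounding $N_2(u)$ in terms of $\|u\|_V$ with a subquadratic or at-worst-quadratic rate with small constant. Using \eqref{eq 2.2} together with the continuous embedding $E \hookrightarrow L^{8/3}$ only gives $N_2(u) \leq C\|u\|_E^4$, which is too crude; instead I would interpolate $|u|_{8/3}$ between $|u|_2$ and a higher norm, or exploit that $B$ bounded controls $|u|_p$ (hence $|u|_2$ after interpolation with a bounded norm), so that $N_2(u) \leq C_0|u|_{8/3}^4$ is in fact bounded whenever $B$ is bounded. If that is established, then $A(u) \geq \frac{1}{2}\|u\|_V^2 - \frac{1}{4}N_2(u) - (\text{bounded}) \to \infty$, giving the dichotomy. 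The delicate step, and the one I would spend the most care on, is confirming that boundedness of $B(u)$ really does bound $|u|_{8/3}$ (and hence $N_2(u)$): this follows because $B$ bounded gives $|u|_*$ bounded, $|u|_*$ bounded gives $|u|_2$ bounded via $|u|_2^2 \leq |u|_*^2$ (since $\log(1+|x|) \geq \log 1 = 0$, one needs the sharper observation that $\|u\|_E \to \infty$ with $|u|_*$ bounded forces $\|u\|_V\to\infty$, and $|u|_2 \leq \|u\|_V/\sqrt{V_0}$ is then not directly bounded), so the cleanest route is to interpolate $|u|_{8/3}$ between $|u|_p$ and $|u|_*$-controlled norms and invoke the boundedness of $B$ directly. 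Once $N_2(u)$ is seen to remain bounded under bounded $B$, the conclusion $A(u)\to\infty$ is routine.
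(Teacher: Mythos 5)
Your skeleton is the same as the paper's: take a sequence with $\|u_n\|_E\to\infty$ along which $B(u_n)$ stays bounded, extract an $L^2$ bound, control $N_2$, and let the quadratic term in $A$ dominate (the paper phrases it as a contradiction with both $A$ and $B$ bounded, which is logically equivalent; the nonnegativity of $B$ is trivial in both accounts). But both of your pivotal estimates are left in a state that would fail. For the $L^2$ bound, the inequality $|u|_2^2\le|u|_*^2$ that you first try is false, since the weight $\log(1+|x|)$ vanishes at the origin — you notice this yourself — and your fallback, ``interpolate $|u|_{8/3}$ between $|u|_p$ and $|u|_*$-controlled norms,'' is not an argument: $|u|_*$ carries no information about $u$ on the unit ball. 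The missing (elementary) step is the two-region splitting of the paper's \eqref{eq 4.3}: H\"older's inequality with $p>2$ on $B_1(0)$ and the bound $\log(1+|x|)\ge\ln 2$ for $|x|\ge1$ give
\begin{equation*}
  |u_n|_2^2 \le \pi^{\frac{p-2}{p}}|u_n|_p^2 + \frac{1}{\ln 2}\,|u_n|_*^2 \le C.
\end{equation*}

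The more serious gap is your key claim that boundedness of $B$ makes $N_2(u)$ bounded via $N_2(u)\le C_0|u|_{8/3}^4$ and Lebesgue interpolation. This fails on part of the admissible range: for $2<p<8/3$ the exponent $8/3$ does not lie in $[2,p]$, so $|u|_{8/3}$ is not controlled by $|u|_2$ and $|u|_p$; concretely, $u_n(x)=n^{\alpha}\phi(nx)$ with $\phi\in C_0^{\infty}(\R^2)\setminus\{0\}$ and $3/4<\alpha\le 2/p$ (such $\alpha$ exists exactly when $p<8/3$) has $|u_n|_p$ bounded and $|u_n|_*\to0$, yet $|u_n|_{8/3}\to\infty$. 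Since the lemma is invoked for every $p\in(2,4)$, this subrange cannot be dismissed. The paper's remedy — which you mention only in passing (``a higher norm'') and never execute — is the Gagliardo--Nirenberg inequality $|u|_{8/3}^4\le C|u|_2^3|\nabla u|_2$, which together with the $L^2$ bound above yields not boundedness but the linear growth $N_2(u_n)\le C\|u_n\|_E$; and linear growth suffices, because $N_1\ge0$ gives
\begin{equation*}
  4A(u_n) = 2\|u_n\|_E^2 + N_1(u_n) - N_2(u_n) \ge 2\|u_n\|_E^2 - C\|u_n\|_E \to \infty.
\end{equation*}
So your concluding display is salvageable once ``$N_2$ bounded'' is replaced by ``$N_2$ grows at most linearly in $\|u\|_E$,'' but as written the proposal rests on a false inequality at the first estimate and an unprovable claim at the second; the ball/exterior splitting and the Gagliardo--Nirenberg step are the two ingredients you must supply.
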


\begin{proof}
According to \eqref{eq 4.1}, we know that $B(u) \geq 0$ for all $u \in E$. It thus remains to prove that
either $A(u) \rightarrow \infty$ or $B(u) \rightarrow \infty$ as $\|u\|_E \rightarrow \infty$.
Suppose by contradiction that this is false. Then, there exists a sequence $\{u_n\}$ such that
\begin{equation}\label{eq 4.2}
  \|u_n\|_E \rightarrow \infty, \quad \sup_{n \in \mathbb{N}} A(u_n) < \infty
  \quad \text{and} \quad \sup_{n \in \mathbb{N}} B(u_n) < \infty.
\end{equation}
From \eqref{eq 4.1} and \eqref{eq 4.2}, we derive that
\begin{equation}\label{eq 4.3}
  |u_n|_2^2 \leq \pi^{\frac{p-2}{p}}|u_n|_p^2 + \frac{1}{\ln 2} |u_n|_{\ast}^2
   \leq C_1 \qquad \text{for } n \in \N.
\end{equation}
It now follows from \eqref{eq 2.2}, \eqref{eq 4.3} and the Gagliardo-Nirenberg inequality that
\begin{equation*}
  N_2(u_n) \leq C_0 |u_n|_{\frac{8}{3}}^4 \leq C_2 |u_n|_2 ^3|\nabla u_n|_2 \leq C_3 \|u_n\|_E
  \qquad \text{for } n \in \N.
\end{equation*}
Combining this with \eqref{eq 4.1} and \eqref{eq 4.2}, we infer that
\begin{equation*}
   C_4 \geq 4 A(u_n) = 2\|u_n\|_E^2 + N_1(u_n) - N_2(u_n) \geq  2\|u_n\|_E^2 - C_3 \|u_n\|_E
   \qquad \text{for } n \in \N,
\end{equation*}
which means that $\{u_n\}$ is bounded in $E$. This contradicts \eqref{eq 4.2}, and the proof is thus finished.
\end{proof}

The following lemma ensures that the functional $I_\lambda$ possesses the mountain pass geometry.

\begin{lemma}\label{lem 4.4}
The following properties hold:
\begin{itemize}
  \item [\rm(i)] There exists $v_0 \in E \backslash \{0\}$ independent of $\lambda$ such that
    $I_\lambda(v_0) < 0$ for all $\lambda \in [1/2, 1]$.

  \item [\rm(ii)] $c_{\lambda} := \inf_{\gamma \in \Gamma} \max_{t\in [0,1]}
    I_{\lambda}(\gamma(t)) > \max\{I_{\lambda}(0), \, I_{\lambda}(v_0)\}$ for all $\lambda \in [1/2, 1]$,
    where
    \begin{equation*}
      \Gamma = \left\{\gamma \in C([0,1], E): \: \gamma(0)= 0 , \ \gamma(1)= v_0\right\}.
    \end{equation*}
\end{itemize}
\end{lemma}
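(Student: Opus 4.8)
The plan is to verify the two standard mountain-pass conditions for the family $\{I_\lambda\}_{\lambda \in [1/2,1]}$, exploiting the uniformity in $\lambda$ that comes from restricting $\lambda$ to the compact interval $[1/2,1]$. For part (i), I would first fix any $u \in E \setminus \{0\}$ and study the fibre $t \mapsto I_\lambda(tu)$ for $t > 0$. Writing out the definition,
\begin{equation*}
  I_\lambda(tu) = \frac{t^2}{2}\|u\|_E^2 + \frac{t^4}{4}N_0(u) - \lambda\left(\frac{t^2}{2}|u|_*^2 + \frac{b t^p}{p}|u|_p^p\right).
\end{equation*}
Since $N_0 = N_1 - N_2$ and $N_2(u) \geq 0$, we have $N_0(u) \leq N_1(u)$; but the decisive point is that the \emph{negative} superquadratic term $-\lambda \frac{b t^p}{p}|u|_p^p$ with $b>0$ and $2 < p < 4$ dominates. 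The competing quartic term $\frac{t^4}{4}N_0(u)$ grows faster in $t$, so I must choose $u$ so that $N_0(u) < 0$, which is possible: for instance, by a scaling/translation argument one can make the nonlocal energy $N_0$ negative (spreading mass out so the $\log|x-y|$ kernel contributes negatively), or one may simply note that along a suitable dilation $u_\sigma(x) = u(x/\sigma)$ the quantity $N_0(u_\sigma)$ becomes negative for $\sigma$ large. With such a $u$ fixed, both the $t^4$ and $t^p$ terms become negative or controlled, and since $\lambda \geq 1/2$ the coefficient of the decisive negative term stays bounded below, so $I_\lambda(tu) \to -\infty$ as $t \to \infty$ \emph{uniformly} in $\lambda \in [1/2,1]$. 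I then set $v_0 = t_0 u$ for $t_0$ large enough that $I_\lambda(v_0) < 0$ for every $\lambda$ simultaneously; this $v_0$ is manifestly independent of $\lambda$.

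For part (ii), I would establish a uniform mountain-pass barrier near the origin. Using the estimate \eqref{eq 2.2}, namely $N_2(u) \leq C_0|u|_{8/3}^4$, together with the embedding $E \hookrightarrow L^s(\R^2)$ (Lemma \ref{lem 2.2}(i)) and the elementary bound $N_0(u) = N_1(u) - N_2(u) \geq -N_2(u)$, I can bound
\begin{equation*}
  I_\lambda(u) \geq \frac{1}{2}\|u\|_E^2 - \frac{1}{4}N_2(u) - \frac{\lambda b}{p}|u|_p^p - \frac{\lambda}{2}|u|_*^2.
\end{equation*}
Since $|u|_*^2 \leq \|u\|_E^2$ and $\lambda \leq 1$, the term $-\frac{\lambda}{2}|u|_*^2$ can be absorbed only partially, so I would instead observe that $\frac{1}{2}\|u\|_E^2 - \frac{\lambda}{2}|u|_*^2 \geq \frac{1}{2}\|u\|_V^2 \geq \frac{1}{2}\min\{1,V_0/(1+V_0)\}\|u\|_E^2$ up to adjusting constants, retaining a positive quadratic lead term. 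Controlling $N_2(u) \leq C\|u\|_E^4$ and $|u|_p^p \leq C\|u\|_E^p$ with $p>2$, I obtain $I_\lambda(u) \geq c_1\|u\|_E^2 - c_2\|u\|_E^4 - c_3\|u\|_E^p$ with constants $c_1,c_2,c_3>0$ independent of $\lambda \in [1/2,1]$. Hence there is a radius $\rho>0$ and a height $\delta>0$, both independent of $\lambda$, such that $I_\lambda(u) \geq \delta > 0$ whenever $\|u\|_E = \rho$.

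Finally I would combine these two facts to conclude the strict inequality. Since $v_0$ with $\|v_0\|_E > \rho$ (enlarging $t_0$ if necessary) satisfies $I_\lambda(v_0) < 0$ while $I_\lambda(0) = 0$, every path $\gamma \in \Gamma$ joining $0$ to $v_0$ must cross the sphere $\{\|u\|_E = \rho\}$, whence $\max_{t \in [0,1]} I_\lambda(\gamma(t)) \geq \delta$ for all such $\gamma$. Taking the infimum over $\Gamma$ gives $c_\lambda \geq \delta > 0 = \max\{I_\lambda(0), I_\lambda(v_0)\}$, which is exactly the desired mountain-pass inequality, uniformly for $\lambda \in [1/2,1]$. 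The main obstacle I anticipate is part (i): because $N_0$ is sign-changing and the natural norm $\|u\|_E$ carries the logarithmic weight $|u|_*^2$, one must verify carefully that a single $v_0$ works for \emph{all} $\lambda$ at once and that the quartic nonlocal term does not destroy the coercivity-to-$-\infty$ behaviour along the chosen fibre; the scaling argument producing $N_0(u)<0$ is the delicate ingredient, and I would double-check it against the estimates in Section \ref{sec 2} rather than treat it as routine.
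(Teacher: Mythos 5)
There are two genuine errors in your proposal. First, in part (i) your mechanism for producing $N_0(u)<0$ is backwards. With $u_\sigma(x)=u(x/\sigma)$ one computes
\begin{equation*}
  N_0(u_\sigma)=\sigma^4\Bigl(N_0(u)+\frac{\log\sigma}{2\pi}\,|u|_2^4\Bigr),
\end{equation*}
so spreading the mass out ($\sigma$ large) makes the logarithmic energy \emph{positive} (typical distances $|x-y|$ exceed $1$, where $\log|x-y|>0$), while concentrating ($\sigma\to0^+$) makes it negative. Once you take $\sigma$ small instead of large, your ray argument does close: if $N_0(u)<0$, then uniformly in $\lambda$ one has $I_\lambda(tu)\le\frac{t^2}{2}\|u\|_E^2+\frac{t^4}{4}N_0(u)\to-\infty$, because $\lambda B(tu)\ge0$. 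The paper avoids this construction altogether: it evaluates $I_{1/2}$ along the rescaling $v_t(x)=t^2v(tx)$ with $v\in C_0^\infty(\R^2)\setminus\{0\}$ arbitrary, under which $N_0(v_t)=t^4\bigl(N_0(v)-\frac{\log t}{2\pi}|v|_2^4\bigr)$, so the term $-\frac{t^4\log t}{8\pi}|v|_2^4$ dominates as $t\to\infty$ regardless of the sign of $N_0(v)$; then $I_\lambda(v_0)\le I_{1/2}(v_0)<0$ for all $\lambda\in[1/2,1]$ since $B\ge0$ and $\lambda\ge 1/2$.

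Second, and more seriously, in part (ii) the claimed inequality $\|u\|_V^2\ge\min\{1,V_0/(1+V_0)\}\|u\|_E^2$ is false: assumption $(V_0)$ does not force $V$ to dominate the weight $\log(1+|x|)$. For instance $V(x)=1+\sqrt{\log(1+|x|)}$ satisfies $(V_0)$, yet for a fixed bump translated to distance $R$ one has $\|u\|_V^2\sim\sqrt{\log R}$ while $|u|_*^2\sim\log R$, so no such constant exists, and consequently there is no uniform positive barrier on the spheres $\{\|u\|_E=\rho\}$ that your crossing argument invokes. The repair, which is exactly the paper's argument, is to place the barrier on spheres of the \emph{weaker} norm: since $\lambda\le1$ gives $\frac12\|u\|_E^2-\frac{\lambda}{2}|u|_*^2\ge\frac12\|u\|_V^2$ (your first, correct step), one gets
\begin{equation*}
  I_\lambda(u)\ \ge\ \frac{\|u\|_V^{2}}{2}-\frac{C_{0}}{4}|u|_{8/3}^{4}-\frac{b}{p}|u|_{p}^{p}
  \ \ge\ \frac{\|u\|_V^{2}}{2}\left(1-C_{1}\|u\|_V^{2}-C_{2}\|u\|_V^{p-2}\right),
\end{equation*}
hence $I_\lambda\ge\delta>0$ on $\{\|u\|_V=\rho\}$ for $\rho$ small, uniformly in $\lambda\in[1/2,1]$. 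Since $\|\cdot\|_V\le\|\cdot\|_E$, every $\gamma\in\Gamma$ is continuous into $H_V$, and $\|v_0\|_V>\rho$ because $I_\lambda(v_0)<0$ while $I_\lambda>0$ on $\{0<\|u\|_V\le\rho\}$; the intermediate value theorem then gives a crossing of $\{\|u\|_V=\rho\}$ and $c_\lambda\ge\delta>0=\max\{I_\lambda(0),I_\lambda(v_0)\}$. With these two corrections your overall strategy (uniform mountain-pass geometry plus a sphere-crossing argument) is sound and matches the paper's.
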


\begin{proof}
(i) Let $v \in C^{\infty}_0(\mathbb{R}^{2}) \setminus \{0\}$.
Then we have, for any $t >0$,
\begin{align*}
  I_{1/2}(v_t)
   \leq &\frac{t^4}{2} |\nabla v|_2^2 + \frac{t^2}{2} \int_{\R^2} V(t^{-1}x)v^2\,dx
      + \frac{t^4}{4} N_0(v) - \frac{t^4 \log t}{8 \pi} |v|^4_2 \\
    & - \frac{ b}{2p} t^{2p-2}|v|_p^p + \frac{t^2}{4}|v|_*^2 + \frac{t^2 |\log t|}{4}|v|_2^4.
\end{align*}
This readily implies that $I_{1/2}(v_t) \rightarrow -\infty$ as $t\rightarrow \infty$.
Taking $v_0 := v_t$ for $t$ large enough, we obtain that $ I_{\lambda}(v_0) \leq I_{1/2}(v_0)<0$
for all $\lambda \in [1/2, 1]$, as claimed.

(ii) By \eqref{eq 2.2} and the Sobolev embeddings, we have,  for every $\lambda \in [1/2, 1]$,
\begin{equation*}
  I_\lambda(u) \geq \frac{\|u\|_V^{2}}{2} - \frac{C_{0}}{4}|u|_{\frac{8}{3}}^{4}
   -\frac{b}{p}|u|_{p}^{p} \geq \frac{\|u\|_V^{2}}{2}\left(1-C_{1}\|u\|_V^{2}  -C_{2}\|u\|_V^{p-2}\right)
   \qquad \text{for $u \in E$},
\end{equation*}
which means that $I_\lambda$ has a strict local minimum at $0$, and thus $c_\lambda >0$.
So the assertion follows.
\end{proof}

Next, we present the following compactness property which could be applied to any bounded
(PS) sequence for $I_\lambda$.

\begin{proposition}\label{prop 4.5}
Let $\lambda \in [1/2, 1]$. Then every bounded $({\rm PS})$ sequence for $I_\lambda$
has a convergent subsequence in $E$.
\end{proposition}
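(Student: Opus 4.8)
The plan is to argue as in the compactness analysis of Cingolani and Weth, exploiting the compact embedding supplied by $(V_0)$ to pass to limits and then using the algebraic structure of the logarithmic kernel to upgrade weak convergence to strong convergence in $E$. Let $\{u_n\}$ be a bounded $(\mathrm{PS})$ sequence for $I_\lambda$. Since $E$ is a Hilbert space, after passing to a subsequence I may assume $u_n \rightharpoonup u$ weakly in $E$; because $\|\cdot\|_V \le \|\cdot\|_E$ the sequence is also bounded in $H_V$, so Lemma \ref{lem 2.1} (equivalently Lemma \ref{lem 2.2}(i)) yields $u_n \to u$ strongly in $L^s(\R^2)$ for every $s \in [2,\infty)$ and $u_n(x)\to u(x)$ a.e. after a further subsequence. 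The goal is then to prove $\|u_n-u\|_E^2 = \|u_n-u\|_V^2 + |u_n-u|_*^2 \to 0$.

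The core of the argument is a single testing identity. Since $I_\lambda'(u_n) \to 0$ in $E'$ and $\{u_n-u\}$ is bounded, $I_\lambda'(u_n)(u_n-u) = o(1)$; since $I_\lambda'(u) \in E'$ is fixed and $u_n-u \rightharpoonup 0$, also $I_\lambda'(u)(u_n-u) = o(1)$. Subtracting and writing $w_n := u_n-u$, the quadratic $V$-part contributes exactly $\|w_n\|_V^2$ and the linear logarithmic part contributes $(1-\lambda)|w_n|_*^2$. The local term $\int_{\R^2}(|u_n|^{p-2}u_n - |u|^{p-2}u)w_n\,dx \to 0$ by the strong $L^p$ convergence, and the $B_2$-contribution vanishes by the Hardy--Littlewood--Sobolev estimates \eqref{eq 2.1}--\eqref{eq 2.2} together with the $L^{8/3}$ convergence. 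For the remaining $B_1$-contribution I would use bilinearity to split
\begin{equation*}
  B_1(u_n^2, u_n w_n) - B_1(u^2, u w_n) = B_1(u_n^2, w_n^2) + B_1\bigl(w_n(u_n+u),\, u w_n\bigr),
\end{equation*}
and note that the second summand tends to $0$ by Lemma \ref{lem 2.4}, applied with the bounded sequences $u_n-u$ and $u_n+u$ and the fixed function $z=u$ occupying the slot $z(u_n-u)$. Collecting everything leaves
\begin{equation*}
  \|w_n\|_V^2 + (1-\lambda)\,|w_n|_*^2 + B_1(u_n^2, w_n^2) = o(1).
\end{equation*}
Because $\lambda \le 1$ and the kernel $\log(1+|x-y|)\ge 0$, all three terms are nonnegative, so each tends to $0$; in particular $\|u_n-u\|_V \to 0$ (hence $|u_n-u|_2\to 0$) and $B_1(u_n^2,(u_n-u)^2)\to 0$.

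It remains to show $|u_n-u|_* \to 0$, which is the delicate point. If the weak limit satisfies $u \neq 0$, then $u_n \to u \neq 0$ a.e., and the second part of Lemma \ref{lem 2.3}, applied with the sequences $u_n$ and $v_n=u_n-u$ (for which $|u_n-u|_2\to 0$ and $B_1(u_n^2,(u_n-u)^2)\to 0$ have just been established), gives $|u_n-u|_* \to 0$, whence $\|u_n-u\|_E \to 0$. If instead $\lambda<1$, the coefficient $1-\lambda$ is strictly positive and $|u_n-u|_* \to 0$ follows immediately from the displayed identity.

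I expect the genuine obstacle to be the borderline case $\lambda=1$ together with $u=0$: there $w_n=u_n$, the identity only yields $\|u_n\|_V \to 0$ and $N_1(u_n)=B_1(u_n^2,u_n^2)\to 0$, and the non-vanishing hypothesis of Lemma \ref{lem 2.3} is unavailable, so one is forced to prove $|u_n|_*\to 0$ directly from $N_1(u_n)\to 0$, the uniform bound on $|u_n|_*$, and the compactness encoded in $(V_0)$ (note that $\log(1+|x|)$ is \emph{not} controlled pointwise by $V$, e.g. on far, thin low-potential regions, so this is where the interplay between the weight and the finiteness of $\{V\le M\}$ must be used). This borderline case is harmless for the intended application, however: if $u=0$ then $\|u_n\|_V\to 0$, $N_0(u_n)\to 0$ and $|u_n|_p^p\to 0$ force $I_\lambda(u_n)\to 0$, so any $(\mathrm{PS})$ sequence at a positive level automatically has a nonvanishing weak limit, falls under the first case, and converges strongly in $E$ to a critical point $u$ with $I_\lambda'(u)=0$.
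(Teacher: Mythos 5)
Your argument is, in substance, the paper's own proof of Proposition \ref{prop 4.5}: the same extraction of a weak limit followed by strong $L^s$-convergence via Lemma \ref{lem 2.2}(i), the same testing against $u_n-u$, the same disposal of the $B_2$- and local contributions via \eqref{eq 2.1}--\eqref{eq 2.2} and the $L^{8/3}$-, $L^p$-convergence, the same splitting of the $B_1$-contribution with Lemma \ref{lem 2.4} killing the cross term (your subtraction of $I_\lambda'(u)(u_n-u)$ and the decomposition $B_1(u_n^2,w_n^2)+B_1\bigl(w_n(u_n+u),uw_n\bigr)$ is only cosmetically different from the paper's $B_1\bigl(u_n^2,(u_n-u)^2\bigr)+B_1\bigl(u_n^2,u(u_n-u)\bigr)$; if anything, your version yields a clean identity where the paper settles for an inequality plus weak lower semicontinuity of $\|\cdot\|_V$), and the same final appeal to Lemma \ref{lem 2.3} to upgrade to $|u_n-u|_*\to 0$.

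The one point of divergence is your explicit flagging of the case $\lambda=1$, $u=0$, and there your caution exposes a lacuna in the published proof rather than a defect of your own: the paper invokes Lemma \ref{lem 2.3} for $v_n=u_n-u$ without verifying its hypothesis that the a.e.\ limit of $\{u_n\}$ be nonzero, which is exactly what fails when $u=0$. Your suspicion that this borderline case is genuinely problematic (and not just technically inconvenient) is well founded: $(V_0)$ alone permits thin low-potential islands far from the origin, and bumps $u_n(x)=\eps_n\chi\bigl((x-x_n)/w_n\bigr)$ placed on such islands at distance $R_n$, normalized by $\eps_n^2 w_n^2\log R_n=1$ with $\eps_n\to 0$ and $\sum_n w_n^2<\infty$, satisfy $\|u_n\|_V\to 0$, $|u_n|_p\to 0$, $N_1(u_n)\to 0$, $N_2(u_n)\to 0$, while $|u_n|_*^2\to 1$; using the bilinear estimate for $B_1$ one checks $I'(u_n)\to 0$ in $E'$, so this is a bounded $({\rm PS})_0$ sequence for $I_1$ with no subsequence converging in $E$. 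Hence the conclusion genuinely requires either $\lambda<1$ or a nonzero weak limit, and your closing observation — that a $({\rm PS})$ sequence at a positive level forces $u\neq 0$, since $u=0$ would give $\|u_n\|_V\to 0$, $N_1(u_n)\to 0$, $(1-\lambda)|u_n|_*^2\to 0$ and thus $I_\lambda(u_n)\to 0$ — is precisely the repair needed, and it is exactly how the proposition is used downstream: in Lemma \ref{lem 4.8} the levels satisfy $c_{\lambda_n}\geq c_1>0$, and in the ground-state step of the proof of Theorem \ref{th 1.2} one even has $\|u_n\|_V\geq\delta>0$ from \eqref{eq 4.15}.
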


\begin{proof}
Let $\{u_n\} \subset E$ be a bounded (PS) sequence for $I_\lambda$, that is,
\begin{equation}\label{eq 4.4}
  \sup_{n \in \N}\|u_n\|_E < \infty, \qquad \sup_{n \in \N} I_{\lambda}(u_n) < \infty
  \qquad \text{and} \qquad I'_{\lambda}(u_n) \rightarrow 0 \quad \text{as $n\rightarrow\infty$}.
\end{equation}
Since $\{u_n\}$ is bounded in $E$, we may assume that, after passing to a subsequence,
there exists $u \in E$ such that $u_{n} \rightharpoonup u$ in $E$.
It then follows from Lemma \ref{lem 2.2}(i) that
$u_{n} \rightarrow u$ in $L^{s}(\mathbb{R}^{2})$ for all $s \geq 2$.
Invoking \eqref{eq 4.4}, we thus deduce that
\begin{align*}
  o(1) =& I_\lambda'(u_{n})(u_{n}-u)\\
     = &\|u_{n}\|_V^{2}-\|u\|_V^{2} + \frac{1}{4}N'_{0}(u_{n}) (u_{n}-u)
       + (1-\lambda) |u_{n}-u|_*^2 + o(1) \\
   \geq &\|u_{n}\|_V^{2}-\|u\|_V^{2}+\frac{1}{4}\left [N'_{1}(u_{n})
        (u_{n}-u)- N'_{2}(u_{n})(u_{n}-u)\right] + o(1),
\end{align*}
where
\begin{equation*}
  \left|\frac{1}{4}N'_{2}(u_{n})(u_{n}-u)\right|
   = \left|B_{2} \left(u_{n}^{2}, \, u_{n}(u_{n}-u)\right)\right|
  \leq C_0 |u_{n}|_{\frac{8}{3}}^{3}\left|u_{n}-u\right|_{\frac{8}{3}}
   \rightarrow 0 \quad \text{as}\ n \rightarrow \infty
\end{equation*}
in view of \eqref{eq 2.1}, and
\begin{equation*}
  \frac{1}{4}N'_{1}(u_{n})(u_{n}-u) = B_{1}\left(u_{n}^{2}, \, u_{n}(u_{n}-u)\right)
   = B_{1}\left(u_{n}^{2}, \, (u_{n}-u)^{2}\right)
    + B_{1}\left(u_{n}^{2}, \, u(u_{n}-u)\right)
\end{equation*}
with
\begin{equation*}
  B_{1}\left(u_{n}^{2}, u(u_{n}-u)\right) \rightarrow 0
    \quad \text{as}\ n\rightarrow\infty
\end{equation*}
according to Lemma \ref{lem 2.4}. Combining these estimates, we derive that
\begin{equation*}
  o(1) \geq \|u_{n}\|_V^{2} - \|u\|_V^{2} + B_{1}\left(u_{n}^{2}, \, (u_{n}-u)^{2}\right) + o(1)
   \geq \|u_{n}\|_V^{2} - \|u\|_V^{2} + o(1),
\end{equation*}
which shows that $\|u_{n}\|_V \rightarrow \|u\|_V$ and $B_{1}\left(u_{n}^{2}, \, (u_{n}-u)^{2}\right)\rightarrow 0$
as $n \rightarrow \infty$. This readily implies that $\|u_{n}-u\|_V \rightarrow 0$, and moreover $|u_{n} - u|_{\ast}\rightarrow 0$
by Lemma \ref{lem 2.3}. We therefore obtain that $\|u_{n} - u \|_{E} \rightarrow 0$ as $n \rightarrow \infty$.
This completes the proof.
\end{proof}

We are now in a position to make use of Proposition \ref{prop 4.1} with $H= E$, $\Lambda = [1/2, 1]$ and $\Phi_\lambda=I_\lambda$.
By Lemmas \ref{lem 4.3} and \ref{lem 4.4}, we see that the functional $I_\lambda$ satisfies the assumptions of Proposition \ref{prop 4.1}.
Therefore, for almost every $\lambda \in [1/2,  1]$, there exists a bounded (PS)$_{c_{\lambda}}$ sequence
for $I_{\lambda}$, and thus Lemma \ref{lem 4.2} and Proposition \ref{prop 4.5} imply that there exists $u_\lambda \in E \backslash \{0\}$  
such that $I_{\lambda}(u_\lambda) = c_\lambda \in [c_1, \, c_{1/2}]$ and $I'_{\lambda}(u_\lambda) = 0$.
It then follows that there exist two sequences $\{\lambda_n\} \subset [1/2, 1]$ and $\{u_{\lambda_n}\} \subset E$
(for simplicity, we denote $\{u_{\lambda_n}\}$ by $\{u_n\}$) such that
\begin{equation}\label{eq 4.5}
  \lambda_n \rightarrow 1 \quad \text{as $n \rightarrow \infty$}, \qquad
  I_{\lambda_n} (u_n) = c_{\lambda_n} \in [c_1, c_{1/2}] \qquad \text{and} \qquad
  I'_{\lambda_n} (u_n) =0 \quad \text{for $n \in \mathbb{N}$}.
\end{equation}
In the following, we shall show that $\{u_n\}$  is a bounded (PS)$_{c_1}$ sequence for $I = I_1$.
For this purpose,  we need to provide a Pohozaev type identity satisfied by the critical points of $I_\lambda$.

\begin{lemma}\label{lem 4.6}
Suppose that $b \geq 0$, $p>2$, and that $(V_0)$ and $(V_1)$ hold.
Let $u$ be a critical point of $I_{\lambda}$ in $E$ for $\lambda \in [1/2, 1]$,
then we have the following Pohozaev type identity:
\begin{equation*}
\begin{aligned}
 P_{\lambda}(u) :=& \frac{1}{2} \int_{\R^2} \bigl[ 2V(x)+(\nabla V(x), \, x) \bigr]u^2 \,dx
  + N_0(u) + \frac{1}{8 \pi} |u|_2^4 - \frac{2b\lambda}{p}|u|_p^p  \\
 &+ \frac{1- \lambda}{2} \left(2 |u |^2_{\ast} + \int_{\R^2} \frac{|x|}{1+|x|}u^2 \,dx\right)= 0.
\end{aligned}
\end{equation*}
\end{lemma}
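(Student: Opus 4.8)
The plan is to derive the identity by the dilation (scaling) method rather than by multiplying the equation directly by $x\cdot\nabla u$, because the logarithmic convolution term transforms remarkably cleanly under dilations whereas a direct integration by parts of the nonlocal term is cumbersome. Fix a critical point $u$ of $I_\lambda$ in $E$ and, for $\theta>0$, set $u_\theta(x):=u(x/\theta)$. First I would record how each ingredient of $I_\lambda$ scales: in dimension two the Dirichlet integral is scale invariant, $\int_{\R^2}|\nabla u_\theta|^2=\int_{\R^2}|\nabla u|^2$, while $\int_{\R^2}V(x)u_\theta^2=\theta^2\int_{\R^2}V(\theta y)u^2\,dy$, $|u_\theta|_*^2=\theta^2\int_{\R^2}\log(1+\theta|y|)u^2\,dy$, $|u_\theta|_p^p=\theta^2|u|_p^p$, and $|u_\theta|_2^2=\theta^2|u|_2^2$. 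The crucial computation is the nonlocal term: substituting $x=\theta\xi$, $y=\theta\eta$ and splitting $\log(\theta|\xi-\eta|)=\log\theta+\log|\xi-\eta|$ gives
\[
  N_0(u_\theta)=\theta^4 N_0(u)+\frac{\theta^4\log\theta}{2\pi}|u|_2^4 .
\]

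Next I would form $g(\theta):=I_\lambda(u_\theta)$, which becomes an explicit elementary function of $\theta$ once these substitutions are inserted into $I_\lambda(u)=\frac12\int_{\R^2}(|\nabla u|^2+Vu^2)+\frac{1-\lambda}{2}|u|_*^2+\frac14 N_0(u)-\frac{\lambda b}{p}|u|_p^p$. Differentiating term by term at $\theta=1$ is then routine: the $V$-term yields $\frac12\int_{\R^2}[2V+(\nabla V,x)]u^2$, the weighted term yields $\frac{1-\lambda}{2}\bigl(2|u|_*^2+\int_{\R^2}\frac{|x|}{1+|x|}u^2\bigr)$, the $N_0$-term yields $N_0(u)$, the extra $\theta^4\log\theta$ factor produces exactly the quadratic contribution $\frac{1}{8\pi}|u|_2^4$ (since $\frac{d}{d\theta}(\theta^4\log\theta)|_{\theta=1}=1$), and the $p$-term yields $-\frac{2\lambda b}{p}|u|_p^p$. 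Collecting these shows $g'(1)=P_\lambda(u)$, precisely the left-hand side of the claimed identity.

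It then remains to prove that $g'(1)=0$. Formally this is immediate, since $g'(1)=I_\lambda'(u)\,\omega$ with $\omega:=\frac{d}{d\theta}u_\theta\big|_{\theta=1}=-(x\cdot\nabla u)$, and $I_\lambda'(u)=0$ because $u$ is critical. I expect the genuine obstacle to be making this rigorous, that is, justifying that $\theta\mapsto u_\theta$ is a differentiable curve in $E$ near $\theta=1$ whose generator $\omega$ lies in $E$, so that the chain rule is legitimate. This is exactly where hypothesis $(V_1)$ enters: the bound $|(\nabla V(x),x)|\le\alpha V(x)$ controls the growth of $V$ along rays (by integrating $\tfrac{d}{dt}\log V(tx)=\tfrac1t(\nabla V(tx),tx)/V(tx)$), and thereby guarantees both that $u_\theta$ stays in $E$ for $\theta$ near $1$ and that $\theta\mapsto\int_{\R^2}V(\theta y)u^2$ is differentiable with the asserted derivative; the weighted and convolution integrals are handled by dominated convergence using the explicit $\theta$-dependence recorded above.

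The delicate point inside this obstacle is the membership $\omega\in E$, because $x\cdot\nabla u$ carries the unbounded factor $|x|$. To secure it I would first upgrade the regularity and decay of the fixed critical point $u$ by elliptic estimates, and, if needed, replace the formal differentiation by a truncation argument: carry out the Pohozaev pairing against $\chi_R\,(x\cdot\nabla u)$, or integrate over $B_R$, and let $R\to\infty$, verifying that the boundary contributions vanish thanks to the decay of $u$ and $\nabla u$. I expect this justification of $g'(1)=0$ — rather than the essentially mechanical differentiation that produces $P_\lambda(u)$ — to be the heart of the proof.
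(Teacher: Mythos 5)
Your scaling computation is correct term by term: with $u_\theta(x)=u(x/\theta)$ one checks $N_0(u_\theta)=\theta^4 N_0(u)+\frac{\theta^4\log\theta}{2\pi}|u|_2^4$ and the remaining rescalings exactly as you state, so $g'(1)=P_\lambda(u)$, and you rightly identify that the substance lies in justifying $g'(1)=0$ since $x\cdot\nabla u$ need not belong to $E$. This matches the paper, which omits the proof as standard and points to \cite[Lemma 2.4]{Du-Weth-2017}, whose argument is precisely your fallback — test the equation against a cutoff of $x\cdot\nabla u$ over balls $B_R$ and pass to the limit — with the one practical refinement that, since pointwise decay of $\nabla u$ is delicate for unbounded $V$, the boundary terms should be killed by choosing radii $R_n\to\infty$ along which the weighted surface integrals vanish (finiteness of $\int(|\nabla u|^2+V u^2+\log(1+|x|)u^2)\,dx$, together with $|(\nabla V,x)|\le\alpha V$ from $(V_1)$, suffices for this), rather than by pointwise estimates.
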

\begin{proof}
The proof is standard, so we omit it here, see e.g. \cite[Lemma 2.4]{Du-Weth-2017} for a similar argument.
\end{proof}

From Lemma \ref{lem 4.6}, we now define, for $\lambda \in [1/2, 1]$, the auxiliary functional
$J_\lambda:\: E \rightarrow \R$ as
\begin{equation*}
  J_{\lambda}(u) = 2I'_{\lambda}(u)u - P_{\lambda}(u),
\end{equation*}
then we have
\begin{align}\label{eq 4.6}
 J_{\lambda}(u) =& 2|\nabla u|_2 ^2 + \frac{1}{2}\int_{\R^2} \bigl[2 V(x)-(\nabla V(x), \, x)\bigr]u^2 \,dx
  +  N_0(u)- \frac{1}{8 \pi} |u |_2^4 \notag \\
  &- \frac{2p-2}{p} b\lambda|u|_p^p
   + \frac{1- \lambda}{2} \left(2 |u |^2_{\ast} - \int_{\R^2} \frac{|x|}{1 + |x|}u^2\,dx \right).
\end{align}
Inspired by \cite[Proposition 3.3]{Du-Weth-2017}, we shall establish the following lemma,
which indicates that the sequence $\{u_n\}$ obtained in \eqref{eq 4.5} is bounded in $H_V$.

\begin{lemma}\label{lem 4.7}
Suppose that $b > 0$, $p > 2$, and that $(V_0)$ and $(V_1)$ hold.
Let $\{\lambda_n\} \subset [1/2, 1]$ and $\{u_n\} \subset E$ be two sequences
satisfying \eqref{eq 4.5}, then $\{u_n\}$ is bounded in $H_V$.
\end{lemma}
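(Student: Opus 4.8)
The plan is to combine the three pieces of information we have about $\{u_n\}$—namely $I_{\lambda_n}'(u_n)=0$, the Pohozaev identity $P_{\lambda_n}(u_n)=0$ (Lemma \ref{lem 4.6}), and the energy bound $I_{\lambda_n}(u_n)=c_{\lambda_n}\in[c_1,c_{1/2}]$—into a single coercive estimate for $\|u_n\|_V^2$. The key algebraic observation is that $J_{\lambda_n}(u_n)=2I_{\lambda_n}'(u_n)u_n-P_{\lambda_n}(u_n)=0$, so the combination $J_{\lambda_n}$ in \eqref{eq 4.6} already has all of its most dangerous terms arranged favorably: the gradient appears as $2|\nabla u_n|_2^2$, the potential appears through $2V(x)-(\nabla V(x),x)$, which by the second inequality in $(V_1)$ satisfies $2V(x)-(\nabla V(x),x)\ge -\beta$, and crucially the local nonlinearity enters with the coefficient $\frac{2p-2}{p}b\lambda_n$ rather than $\frac{b}{p}$. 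First I would write down the linear combination $I_{\lambda_n}(u_n)-\frac{1}{2(p-1)}J_{\lambda_n}(u_n)$ (or an analogous weighting) designed so that the troublesome $|u_n|_p^p$ terms cancel exactly.

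After cancellation, the surviving terms should control $|\nabla u_n|_2^2$ and $\int_{\R^2}V(x)u_n^2\,dx$ from above in terms of the bounded quantity $c_{\lambda_n}\le c_{1/2}$, the constant $\beta$, the mass term $\frac{1}{8\pi}|u_n|_2^4$, and the nonlocal term $N_0(u_n)$. The bound on $c_{\lambda_n}$ comes for free from \eqref{eq 4.5}. The $(1-\lambda_n)$-weighted contributions are nonnegative (or controllable) and only help. The remaining step is to handle the lower-order quantities: the $L^2$-mass $|u_n|_2$ must be absorbed, and here I would use that the combination controls $\int[2V(x)-(\nabla V,x)]u_n^2 + \text{(something)}\,V(x)u_n^2$, from which, using $2V(x)-(\nabla V(x),x)\ge-\beta$ together with $\inf V=V_0>0$ from $(V_0)$, one extracts a genuine coercive lower bound $c\|u_n\|_V^2 - C$ on the left-hand side. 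Since the $L^2$-norm is dominated by $\|u_n\|_V$, the mass term $|u_n|_2^4$ is at most $C\|u_n\|_V^4$, so I must be careful that this quartic term does not overwhelm the quadratic coercive term.

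The main obstacle is precisely the nonlocal mass term $-\frac{1}{8\pi}|u_n|_2^4$ appearing in $J_{\lambda_n}$ together with $N_0(u_n)=N_1(u_n)-N_2(u_n)$: these are quartic in $u$ and sign-indefinite, so a naive estimate would produce a quartic term on the wrong side of the inequality and destroy coercivity. The way around this is to exploit that $N_1(u_n)\ge 0$ while $N_2(u_n)$ is controlled by \eqref{eq 2.2} through $|u_n|_{8/3}^4$, and then interpolate: by the Gagliardo–Nirenberg inequality $|u_n|_{8/3}^4\le C|u_n|_2^3|\nabla u_n|_2$, so this term is subquadratic in $\|u_n\|_V$ once the $L^2$-norm is already known to be bounded. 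Therefore I would first establish an a priori $L^2$-bound $|u_n|_2\le C$ (arguing that the pure quadratic-and-mass part of the chosen combination forces it, much as in the boundedness argument of Lemma \ref{lem 4.3} and \eqref{eq 4.3}), and only afterward close the estimate for $|\nabla u_n|_2$ and $\int V u_n^2$. Once $\|u_n\|_V$ is bounded this way, the proof is complete; promoting boundedness from $H_V$ to $E$ will be carried out separately using Lemma \ref{lem 2.3}, exactly as in the proof of Proposition \ref{prop 3.4}.
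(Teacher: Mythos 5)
There is a genuine gap: your coercive-combination plan works only for $p\ge 3$ and breaks down precisely on the range $2<p<3$, which is the main case this lemma must cover (Theorem \ref{th 1.2} treats all of $2<p<4$). Carry out your cancellation explicitly: the unique $\theta$ with the property that $I_{\lambda}-\theta J_{\lambda}$ has no $|u|_p^p$ term is $\theta=\frac{1}{2(p-1)}$, and then
\begin{equation*}
c_{\lambda_n}=I_{\lambda_n}(u_n)-\tfrac{1}{2(p-1)}J_{\lambda_n}(u_n)
=\tfrac{p-3}{2(p-1)}|\nabla u_n|_2^2+\tfrac{1}{4(p-1)}\int_{\R^2}\bigl[2(p-2)V+(\nabla V,x)\bigr]u_n^2\,dx
+\tfrac{p-3}{4(p-1)}N_0(u_n)+\tfrac{1}{16\pi(p-1)}|u_n|_2^4+(1-\lambda_n)R_n
\end{equation*}
with $R_n\ge 0$. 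For $2<p<3$ the coefficients of $|\nabla u_n|_2^2$ and of $N_1(u_n)$ (inside $N_0=N_1-N_2$) are \emph{negative}, and since $N_1\ge 0$ admits no a priori upper bound, the identity is consistent with $|\nabla u_n|_2\to\infty$ balanced against the quartic mass term: no coercivity follows. You cannot repair this by another weight: any $\theta$ killing the $p$-term is forced above $\frac14$ when $p<3$, while keeping $\theta=\frac14$ (the paper's choice, which kills both $|\nabla u|_2^2$ and $N_0$, yielding \eqref{eq 4.7}) leaves $\frac{p-3}{2p}b\lambda_n|u_n|_p^p$ with a negative coefficient, and by Gagliardo--Nirenberg $|u_n|_p^p\le C|u_n|_2^2|\nabla u_n|_2^{p-2}$ has exactly the same homogeneity as $|u_n|_2^4$ along $|u_n|_2^2\sim|\nabla u_n|_2^{p-2}$; hence your proposed ordering (an $L^2$ bound first, then the gradient) cannot be extracted either. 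The paper's order is the reverse, and for good reason.

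The missing idea is the blow-up argument of the paper's Case 2 ($p\le 3$): assuming $|\nabla u_n|_2\to\infty$, set $t_n=|\nabla u_n|_2^{-1/2}$ and $v_n=t_n^2u_n(t_n\cdot)$, so $|\nabla v_n|_2=1$; multiplying \eqref{eq 4.7} by $t_n^4$ gives the sharp smallness $|v_n|_2=o\bigl(t_n^{(3-p)/2}\bigr)$ (resp.\ $o\bigl(t_n^{1/2}\bigr)$ if $p=3$), and then the logarithmic dilation identity $N_0(u_n)=t_n^{-4}\bigl(N_0(v_n)+\frac{\log t_n}{2\pi}|v_n|_2^4\bigr)$, inserted into $t_n^4 I_{\lambda_n}'(u_n)u_n=0$, forces $0\ge 1+N_0(v_n)+o(1)$ and hence $1+N_1(v_n)\le N_2(v_n)+o(1)\le C|v_n|_2^3+o(1)=o(1)$, a contradiction; only then does \eqref{eq 4.7} yield the $L^2$ bound and the $H_V$ bound. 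None of this rescaling machinery appears in your proposal. Two smaller points: $(V_1)$ bounds $2V+(\nabla V,x)$ from below by $-\beta$, not $2V-(\nabla V,x)$ as you quote (the latter is what sits inside $J_\lambda$; the plus sign emerges only in the combination $I_\lambda-\frac14 J_\lambda$); and in your combination the potential term is nonnegative-up-to-constants only for $p>3$, via $2(p-2)V+(\nabla V,x)=(2p-6)V+\bigl[2V+(\nabla V,x)\bigr]\ge(2p-6)V-\beta$ --- in that regime your route is a legitimate variant of the paper's Case 1, but it does not prove the lemma as stated.
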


\begin{proof}
By $(V_1)$, we derive from \eqref{eq 4.5}, \eqref{eq 4.6} and Lemma \ref{lem 4.6} that
\begin{align}\label{eq 4.7}
 c_{1/2} \geq& I_{\lambda_n}(u_n) - \frac{1}{4} J_{\lambda_n}(u_n) \notag\\
    = &\frac{1}{8} \int_{\mathbb{R}^{2}} \bigl [2 V(x) + (\nabla V(x), x)\bigr]u^2_n\,dx
      + \frac{1}{32 \pi} |u_n|^4_2  + \frac{p-3}{2p} b\lambda_n |u_n|^p_p \notag\\
      & + \frac{1- \lambda_n}{8} \left(2|u|^2_{\ast} + \int_{\mathbb{R}^{2}} \frac{|x|}{1+|x|}u_n^2 \,dx\right)\notag\\
  \geq & -\frac{\beta}{8} |u_n|^2_2 + \frac{1}{32 \pi} |u_n|^4_2 + \frac{p-3}{2p} b \lambda_n |u_n|^p_p.
\end{align}
Then we may distinguish the following two cases:

\emph{Case 1: $p>3$}. In this case,  \eqref{eq 4.7} gives that $\{u_{n}\}$ is bounded in $L^{2}(\mathbb{R}^{2})$
and in $L^{p}(\mathbb{R}^{2})$. By \eqref{eq 2.2} and the H\"{o}lder inequality, we then have
\begin{equation*}
  N_2(u_n) \leq C_0 |u_n|^4_{\frac{8}{3}} \leq C_0 |u_n|^{4(1-\theta_0)}_2 |u_n|^{4\theta_0}_p \leq C_1,
\end{equation*}
where $\theta_0 = \frac{p}{4(p-2)}$. Consequently, we can use \eqref{eq 4.5} again to obtain
\begin{equation*}
  2\|u_n\|_V^2 + N_1(u_n) \leq 4I_{\lambda_n}(u_n) + N_2(u_n) + \frac{4b}{p} |u_n|^p_p
  \leq 4c_{1/2} + C_1 + \frac{4b}{p}|u_n|^p_p \leq C_2.
\end{equation*}
This implies that $\{u_n\}$ is bounded in $H_V$.

\emph{Case 2: $p \leq3$}. We first claim that
\begin{equation}\label{eq 4.8}
  |\nabla u_n |_2 \leq C_3  \qquad \text{for $n \in \mathbb{N}$}.
\end{equation}
Suppose to the contrary that \eqref{eq 4.8} does not hold.
We then have, after passing to a subsequence,
\begin{equation*}
  |\nabla u_n|_2 \rightarrow \infty  \qquad \text{as $n \rightarrow \infty$}.
\end{equation*}
Let $t_n:=|\nabla u_n|_2^{-1/2}$ for $n \in \mathbb{N}$, so that $t_n \rightarrow 0$
as $n \rightarrow \infty$. For $n \in \mathbb{N}$, we define the rescaled function
$v_n \in E$ by $v_n(x):= t_n^2 u_n(t_n x)$ for $x \in \R^2$, and a direct calculation
gives that
\begin{equation}\label{eq 4.9}
  |\nabla v_n|_2 =1 \qquad \text{and} \qquad |v_n|_q ^q = t_n^{2q-2} |u_n|_q ^q
  \quad \text{with }  1 \leq q < \infty.
\end{equation}
By the Gagliardo-Nirenberg inequality, we thus deduce that
\begin{equation}\label{eq 4.10}
  |v_n|^p_p \leq C_4 |v_n|^2_2 |\nabla v_n|^{p-2}_2 = C_4 |v_n|^2_2 \qquad \text{for $n \in \mathbb{N}$}.
\end{equation}
Multiplying \eqref{eq 4.7} by $t^4_n$, we can derive from \eqref{eq 4.9} and \eqref{eq 4.10} that
\begin{align*}
  c_{1/2} t^4_n &\geq -\frac{\beta}{8} t_n^4 |u_n|^2_2 + \frac{1}{32 \pi} t_n^4 |u_n|^4_2
      - \frac{b(3-p)}{2p} t_n^4 |u_n|_p^p\\
   &\geq -\frac{\beta}{8} t_n^2 |v_n|^2_2 + \frac{1}{32 \pi} |v_n|^4_2 - \frac{b(3-p)}{2p} C_4 t_n^{6-2p} |v_n|^2_2.
\end{align*}
As a consequence,
\begin{equation}\label{eq 4.11}
  |v_{n}|_{2}=
  \begin{cases}
    o\bigl(t_{n}^{1/2}\bigr)      \qquad &\text{if $p=3$},\\[1.5mm]
    o\bigl(t_{n}^{(3-p)/2}\bigr)  &\text{if $2<p<3$}.
  \end{cases}
\end{equation}
Moreover, by assumption we also obtain
\begin{equation*}
  0 = t^4_n I'_{\lambda_n}(u_n)u_n = t_n^4 \left( \|u_n \|_V ^2 + N_0(u_n)
   - b \lambda_n |u_n|_p^p  + (1-\lambda_n)|u_n|_*^2\right).
\end{equation*}
Combining this with \eqref{eq 4.9}$-$\eqref{eq 4.11} and the fact that
\begin{equation*}
  N_{0}(u_{n})= \frac{t_n^4}{2\pi} \int_{\R^2} \int_{\R^2}
    \log \left(|t_n x-t_n y|\right)u_n^{2}(t_n x)u_n^{2}(t_n y)\:dxdy
  = t_n^{-4}\left( N_0(v_n) + \frac{\log t_n}{2\pi}|v_n|_2^4 \right),
\end{equation*}
we infer that
\begin{equation}\label{eq 4.12}
  0 \geq 1 + N_0(v_n) + \frac{\log t_n}{2 \pi} |v_n|_2 ^4 + o(1)
    = 1 + N_0(v_n) + o(1).
\end{equation}
It then follows from \eqref{eq 2.2}, \eqref{eq 4.11}, \eqref{eq 4.12}
and the Gagliardo-Nirenberg inequality that
\begin{equation*}
  1 \leq 1 + N_1(v_n) \leq N_2(v_n)+ o(1) \leq C_0 |v_n|^4_{\frac{8}{3}}+ o(1) \leq C_5 |v_n|^3_2+ o(1) = o(1),
\end{equation*}
which yields a contradiction, and so \eqref{eq 4.8} holds. Using the Gagliardo-Nirenberg inequality again,
we further conclude from \eqref{eq 4.7} that
\begin{equation*}
  c_{1/2} \geq -\frac{\beta}{8} |u_n|^2_2 +  \frac{1}{32 \pi}  |u_n|^4_2 - \frac{b(3-p)}{2p} |u_n|^p_p
   \geq \frac{1}{32 \pi} |u_n|^4_2  -  C_6 |u_n|_2^2.
\end{equation*}
Consequently, $\{u_n\}$ is bounded in $L^2(\R^2)$. This, together with \eqref{eq 4.8}, implies that
$\{u_{n}\}$ is bounded in $H^1(\R^2)$. We therefore obtain from \eqref{eq 2.2}, \eqref{eq 4.5}
and the Gagliardo-Nirenberg inequality that $\{u_{n}\}$ is bounded in $H_V$, as claimed.
\end{proof}

In the following key lemma, we shall show that $\{u_n\}$ is a bounded (PS)$_{c_1}$ sequence for $I = I_1$.

\begin{lemma}\label{lem 4.8}
Suppose that $b > 0$, $p > 2$, and that $(V_0)$ and $(V_1)$ hold.
Let $\{\lambda_n\} \subset [1/2, 1]$ and $\{u_n\} \subset E$ be two sequences
satisfying \eqref{eq 4.5}, then $\{u_n\}$ is a bounded $({\rm PS})_{c_1}$ sequence for $I$.
\end{lemma}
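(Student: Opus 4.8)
The plan is to build on Lemma~\ref{lem 4.7}, which already gives boundedness of $\{u_n\}$ in $H_V$; what remains is to upgrade this to boundedness in $E$ and then to verify the two $({\rm PS})_{c_1}$ requirements, namely $I(u_n)\to c_1$ and $I'(u_n)\to 0$ in $E'$. The guiding principle is that $I_{\lambda_n}$ and $I=I_1$ differ only through the factor $(1-\lambda_n)$ multiplying $B(u)=\tfrac12|u|_*^2+\tfrac{b}{p}|u|_p^p$, so that once $\{u_n\}$ is bounded in $E$, every discrepancy between the two functionals (and their derivatives) is of order $1-\lambda_n\to 0$.

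First I would prove boundedness in $E$, that is, that $\{|u_n|_*\}$ is bounded. By Lemma~\ref{lem 2.1}, the $H_V$-bound yields, along a subsequence, a weak limit $u\in H_V$ with $u_n\to u$ in $L^s(\R^2)$ for all $s\ge2$ and pointwise a.e. The decisive point is $u\neq0$: if $u=0$, then $|u_n|_p\to0$ and $N_2(u_n)\le C_0|u_n|_{8/3}^4\to0$, so the critical point identity $I'_{\lambda_n}(u_n)u_n=0$, i.e.
\[
  \|u_n\|_V^2+(1-\lambda_n)|u_n|_*^2+N_1(u_n)=N_2(u_n)+\lambda_n b|u_n|_p^p,
\]
forces $\|u_n\|_V\to0$ since every term on the left is nonnegative; this gives $I_{\lambda_n}(u_n)\to0$, contradicting $I_{\lambda_n}(u_n)=c_{\lambda_n}\ge c_1>0$ from \eqref{eq 4.5} and Lemma~\ref{lem 4.4}(ii). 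With $u\neq0$ secured, the same identity shows $\sup_n N_1(u_n)=\sup_n B_1(u_n^2,u_n^2)<\infty$, and Lemma~\ref{lem 2.3} applied with $v_n=u_n$ then yields $|u_n|_*\le C$. A routine argument by contradiction (passing to any subsequence along which $|u_n|_*\to\infty$ and repeating the above) upgrades this to boundedness of the full sequence in $E$.

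Next I would check the two $({\rm PS})$ conditions. Since $\lambda_n\to1$ from below and $\lambda\mapsto c_\lambda$ is left-continuous (Lemma~\ref{lem 4.2}), we have $c_{\lambda_n}\to c_1$. From $I(u)-I_{\lambda_n}(u)=-(1-\lambda_n)B(u)$ and the boundedness of $B(u_n)$ just obtained, it follows that $I(u_n)=c_{\lambda_n}-(1-\lambda_n)B(u_n)\to c_1$. For the derivative, for every $v\in E$,
\[
  I'(u_n)v=I'_{\lambda_n}(u_n)v-(1-\lambda_n)\Big(\int_{\R^2}\log(1+|x|)u_n v\,dx+b\int_{\R^2}|u_n|^{p-2}u_n v\,dx\Big),
\]
and since $I'_{\lambda_n}(u_n)=0$, estimating the first integral by $|u_n|_*|v|_*$ and the second by $|u_n|_p^{p-1}|v|_p$ (using the continuous embedding $E\hookrightarrow L^p(\R^2)$) bounds the right-hand side by $(1-\lambda_n)C\|v\|_E$. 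Hence $\|I'(u_n)\|_{E'}\le(1-\lambda_n)C\to0$, and $\{u_n\}$ is a bounded $({\rm PS})_{c_1}$ sequence for $I$.

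The step I expect to be the main obstacle is the boundedness in $E$: the weighted seminorm $|\cdot|_*$ is not controlled by the $H_V$-bound, and recovering it hinges entirely on the nonvanishing of the weak limit $u$, which itself relies on the strict positivity $c_1>0$ of the mountain-pass level. Once this is in place, the remaining verifications are routine perturbation estimates driven by $\lambda_n\to1$.
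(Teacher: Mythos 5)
Your proof is correct and takes essentially the same route as the paper: boundedness in $H_V$ from Lemma \ref{lem 4.7}, nonvanishing of the weak limit via the contradiction $c_1 \le c_{\lambda_n} = I_{\lambda_n}(u_n) \to 0$ against $c_1>0$, Lemma \ref{lem 2.3} applied with $v_n=u_n$ to bound $|u_n|_*$, and then $O(1-\lambda_n)$ perturbation estimates (using Lemma \ref{lem 4.2}) to get $I(u_n)\to c_1$ and $\|I'(u_n)\|_{E'}\to 0$. The only differences are cosmetic: you make explicit the subsequence upgrade to full-sequence boundedness and the pointwise form of $I'(u_n)v - I'_{\lambda_n}(u_n)v$, both of which the paper leaves implicit.
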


\begin{proof}
We first observe from Lemma \ref{lem 4.7} that $\{u_n\}$ is bounded in $H_V$.
Then by Lemma \ref{lem 2.1}, there exists $u \in H_V$ such that,
up to a subsequence,
\begin{equation*}
  u_n \rightharpoonup u \quad \text{in $H_V$},  \qquad
  u_n \rightarrow u \quad \text{in $L^s(\mathbb{R}^2)$ for all $s \geq 2$}, \qquad
  u_n(x) \rightarrow u(x) \quad \text{a.e. in $\mathbb{R}^2$.}
\end{equation*}
We now claim that $u \neq 0$. Arguing by contradiction, suppose that $u = 0$.
Invoking \eqref{eq 2.2} and \eqref{eq 4.5}, we obtain
\begin{equation*}
  0 = I'_{\lambda_n}(u_n)u_n = \|u_n\|_V^2 + N_1(u_n) + (1-\lambda_n) |u_n|^2_{\ast} + o(1),
\end{equation*}
which means that $\|u_n\|_V \rightarrow 0$, $N_1(u_n) \rightarrow 0$ and $(1-\lambda_n) |u_n|^2_{\ast} \rightarrow 0$
as $n \rightarrow \infty$. It then follows that
\begin{equation*}
  c_1 \leq c_{\lambda_n} = I_{\lambda_n}(u_n)=\frac{1}{2} \|u_n\|_V^2 +
   \frac{1}{4} N_0(u_n) - \frac{b\lambda_n}{p} |u_n|^p_p  + \frac{1-\lambda_n}{2} |u_n|^2_{\ast} = o(1).
\end{equation*}
This contradicts Lemma \ref{lem 4.4}(ii), so that the claim follows.
Using  \eqref{eq 4.5} again, we have
\begin{equation*}
  B_1\left(u_n^2, u_n^2\right) = N_1(u_n) = N_2(u_n) + b|u_n|_p^p - \|u_n\|_V^2 -(1-\lambda_n) |u_n|_*^2,
\end{equation*}
which implies that $\sup_{n \in \mathbb{N}} B_1\left(u_n^2, u_n^2\right) < \infty$ due to the boundedness
of $\{\|u_n\|_V\}$. We thus deduce that $|u_n|_\ast$ remains bounded in $n$ by Lemma \ref{lem 2.3},
and therefore $\{u_n\}$ is bounded in $E$.

To finish the proof of the lemma, it remains to show that $\{u_n\}$ is a $({\rm PS})_{c_1}$ sequence for $I$,
i.e.,
\begin{equation}\label{eq 4.13}
  I(u_n) \rightarrow c_1 \qquad \text{and} \qquad I'(u_n) \rightarrow 0 \quad \text{as } n \rightarrow \infty.
\end{equation}
Since $\{u_n\}$ is bounded in $E$, we derive from \eqref{eq 4.5} and Lemma \ref{lem 4.2} that
\begin{equation}\label{eq 4.14}
  \lim_{n \to \infty} I(u_n) = \lim_{n \to \infty} \left( I_{\lambda_n}(u_n)
   +\frac{b(\lambda_n -1)}{p}|u_n|_p^p + \frac{\lambda_n -1}{2} |u_n|_*^2 \right)
   =\lim_{n \to \infty} c_{\lambda_n} = c_1.
\end{equation}
Similarly, we also have
\begin{equation*}
   \| I'(u_n)\|_{E'} = \sup_{\varphi \in E \setminus \{0\}}
   \frac{| I'(u_n) \varphi |}{\|\varphi\|_E} \leq   C_1  (1- \lambda_n),
\end{equation*}
which means that $I'(u_n) \to 0$ as $n \to \infty$. Combining this with \eqref{eq 4.14}
gives \eqref{eq 4.13}, as claimed.
\end{proof}

\begin{proof}[Proof of Theorem \ref{th 1.2}]
By Proposition \ref{prop 4.5} and Lemma \ref{lem 4.8}, there exists a critical point
$u \in E \backslash \{0\}$ of $I$ such that $I'(u)=0$ and $I(u) =c_1$. In particular,
the set
\begin{equation*}
  \mathcal{K} := \{u \in E \setminus \{0\}: \: I'(u) = 0\}
\end{equation*}
is nonempty. Moreover, it is easy to check that
\begin{equation}\label{eq 4.15}
  \delta:=\inf_{u \in \mathcal{K}} \|u\|_V >0.
\end{equation}
Let $\{u_n\} \subset \mathcal{K}$  be a sequence such that
\begin{equation}\label{eq 4.16}
  I(u_n) \rightarrow c_g := \inf_{u \in \mathcal{K}} I(u)  \in [-\infty, c_1].
\end{equation}
From the definition of $\mathcal{K}$ and Lemma \ref{lem 4.6}, we see that
the sequence $\{u_n\}$ satisfies
\begin{equation}\label{eq 4.17}
  I'(u_n) = 0 \qquad \text{and} \qquad J(u_n)=0 \quad \text{for all $n \in \mathbb{N}$},
\end{equation}
where $J(u):= J_1(u)$ is given by \eqref{eq 4.6}.
Similar to the arguments in Lemmas \ref{lem 4.7} and \ref{lem 4.8},
we derive from \eqref{eq 4.15}$-$\eqref{eq 4.17} that $\{u_n\}$ is bounded in $E$.
By Proposition \ref{prop 4.5}, we obtain that there exists $u_0 \in E$ such that,
up to a subsequence,
\begin{equation*}
   u_{n} \rightarrow u_0 \quad \text{in} \ E \ \text{as}\ n \rightarrow \infty.
\end{equation*}
It then follows from \eqref{eq 4.15} and \eqref{eq 4.17} that
\begin{equation*}
  \|u_0\|_V \ge \delta \qquad \text{and} \qquad I'(u_0) = 0,
\end{equation*}
so that $u_0 \in \mathcal{K}$. Furthermore, we also have
\begin{equation*}
  I(u_0) = \lim_{n \rightarrow \infty} I(u_n)=c_g,
\end{equation*}
which implies in particular that $c_g > -\infty$.
This completes the proof.
\end{proof}

\section{Proof of Theorem \ref{th 1.3}} \label{sec 5}

$ $
\indent
In this section, we will prove Theorem \ref{th 1.3} on the minimax characterization of
ground state solutions for \eqref{eq 1.5} in the case where $p \geq 3$. In the sequel,
we always assume that $b \geq 0$, $p \geq 3$, and that $(V_0)$ and $(V_2)$ hold.
We start with some elementary observations.

\begin{lemma}\label{lem 5.1}
Suppose that $(V_0)$ and $(V_2)$ hold. Then we have
\begin{equation}\label{eq 5.1}
  (\nabla V(x),x) \geq 0 \qquad \text{for all $x\in \R^2$.}
\end{equation}
\end{lemma}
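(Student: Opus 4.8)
The plan is to reduce the statement to a one-dimensional differential inequality along the rays emanating from the origin. Fix $x \in \R^2 \setminus \{0\}$ (the case $x = 0$ being trivial), and set $f(t) := V(tx)$ and $g(t) := \mathcal{V}(tx)$ for $t > 0$. Since $V \in C^1(\R^2)$ by $(V_2)$, the function $f$ is $C^1$ with $f'(t) = (\nabla V(tx), x)$, so proving $(\nabla V(x), x) \geq 0$ amounts to proving $f'(1) \geq 0$; more generally I would show $f'(t) \geq 0$ for every $t > 0$. The chain rule yields the key algebraic identity $g(t) = f(t) - \tfrac{t}{2} f'(t)$, which recasts $(V_2)$ as a constraint linking $f$ to its derivative.

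First I would record that $g$ is positive. By $(V_2)$ the map $t \mapsto g(t)$ is nondecreasing on $(0,\infty)$, so by continuity $g(t) \geq \lim_{s\to 0^+} g(s) = \mathcal{V}(0) = V(0) \geq V_0 > 0$. Next I would differentiate the quotient $\phi(t) := f(t)/t^2$ and invoke the identity above to get
\[
  \frac{d}{dt}\Big(\frac{f(t)}{t^2}\Big) = \frac{t f'(t) - 2 f(t)}{t^3} = -\frac{2 g(t)}{t^3} < 0,
\]
so that $\phi$ is strictly decreasing and positive, hence admits a finite limit $L := \lim_{t\to\infty} \phi(t) \geq 0$. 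Integrating this derivative from $t$ to $\infty$ then produces the representation
\[
  \phi(t) = L + \int_t^\infty \frac{2 g(r)}{r^3}\, dr,
\]
the integral converging precisely because $\phi$ has a finite limit at infinity.

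The decisive step then exploits the monotonicity of $g$ a second time: since $g(r) \geq g(t)$ for all $r \geq t$, one has $\int_t^\infty 2 g(r) r^{-3}\, dr \geq g(t)\int_t^\infty 2 r^{-3}\, dr = g(t)/t^2$, whence
\[
  f(t) = t^2 \phi(t) \geq L t^2 + g(t) \geq g(t).
\]
Recalling $f'(t) = \tfrac{2}{t}(f(t) - g(t))$ (a rearrangement of $g = f - \tfrac{t}{2} f'$), I conclude $f'(t) = (\nabla V(tx), tx)/t \geq 0$. Evaluating at $t = 1$ gives $(\nabla V(x),x) \geq 0$, and since $x \in \R^2 \setminus \{0\}$ was arbitrary this yields \eqref{eq 5.1}.

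I expect the main obstacle to be conceptual rather than computational: hypothesis $(V_2)$ controls the combination $\mathcal{V} = V - \tfrac{1}{2}(\nabla V, \cdot)$ and \emph{not} the radial derivative $(\nabla V, x)$ directly, so the real content lies in finding the device that converts monotonicity of $\mathcal{V}$ along rays into nonnegativity of $f'$. The chosen device—integrating the strictly decreasing quotient $f/t^2$ and bounding the resulting tail integral below by the monotone $g$—is what closes this gap; the remaining care is merely in justifying the convergence of the improper integral and the boundary value $g(0^+) = V(0)$.
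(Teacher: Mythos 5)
Your proof is correct, and it takes a genuinely different route from the paper's. The paper fixes $x$ and introduces the single auxiliary function $f(t) = t^2V(x) - t^2V(t^{-1}x) + \frac{1-t^2}{2}(\nabla V(x),x)$, computes $f'(t) = 2t\left(\mathcal{V}(x) - \mathcal{V}(t^{-1}x)\right)$ so that $(V_2)$ forces $f$ to attain its minimum $f(1)=0$ at $t=1$, and then reads off $t^2V(x) + \frac{1-t^2}{2}(\nabla V(x),x) \geq t^2 V(t^{-1}x) \geq 0$, where $(V_0)$ enters only through $V \geq 0$; the claim follows upon letting $t \to 0^{+}$. You instead work along the ray toward infinity: from the identity $g(t) = f(t) - \frac{t}{2}f'(t)$ you derive $\frac{d}{dt}\bigl(f(t)/t^2\bigr) = -2g(t)/t^3$, integrate to the representation $f(t)/t^2 = L + \int_t^\infty 2g(r)r^{-3}\,dr$ (the limit $L \geq 0$ existing because $V>0$ makes the quotient decreasing and bounded below, which also legitimizes the improper integral), and then invoke the monotonicity of $g$ a second time to bound the tail below by $g(t)/t^2$, giving $f(t) \geq g(t)$, which is exactly $(\nabla V(tx),tx) \geq 0$. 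All your steps check out: the chain-rule identity is right, $g(t) \geq \lim_{s\to 0^{+}}\mathcal{V}(sx) = V(0) \geq V_0 > 0$ follows from monotonicity plus continuity of $\nabla V$, and the case $x=0$ is trivial as you note. Comparing the two: the paper's argument is shorter and avoids any discussion of behaviour at infinity or convergence of improper integrals — one differentiation, one limit $t \to 0^{+}$, and only the sign of $V$; yours is a bit longer but more informative, since the representation formula proves the conclusion at all points $tx$ of the ray simultaneously and recovers along the way the bounds $\mathcal{V}(x) \geq V_0$ and $(\nabla V(x),x) \leq 2V(x)$ that the paper records separately in \eqref{eq 1.10}.
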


\begin{proof}
For any fixed $x\in \R^2$,  define $f: (0, \infty) \rightarrow \R$ by
\begin{equation*}
  f(t) = t^2 V(x) - t^2 V(t^{-1}x) + \frac{1-t^2}{2}(\nabla V(x),x).
\end{equation*}
With an easy computation, we see that
\begin{equation*}
  f'(t) = 2t \left( \mathcal{V}(x) - \mathcal{V}(t^{-1}x)\right) \qquad \text{for $t>0$}.
\end{equation*}
It then follows from  $(V_2)$ that $f'(t) \leq 0$ on $(0, 1)$ and $f'(t) \geq 0$ on $(1,\infty)$.
This gives that
\begin{equation*}
  f(t) \geq f(1)=0 \qquad \text{for $t>0$,}
\end{equation*}
and therefore
\begin{equation}\label{eq 5.2}
  t^2 V(x) + \frac{1-t^2}{2}(\nabla V(x),x) \geq  t^2 V(t^{-1}x) \geq 0  \qquad \text{for $t>0$}
\end{equation}
in virtue of $(V_0)$. By passing to the limit $t \rightarrow 0^+$ in \eqref{eq 5.2}, we arrive at \eqref{eq 5.1},
as claimed.
\end{proof}

\begin{lemma}\label{lem 5.2}
Suppose that $\beta: (0, \infty) \to \R$ is a $C^1$-function, and that $t \in (0, \infty) \mapsto t^{-1} \beta'(t)$
is a non-increasing function with a positive lower bound.
Let $C_{i} \in \R$ for $i=1,2,3,4$, and let $C_1, C_3 >0$ and $C_4 \geq 0$. If $p \geq 3$, then the function
\begin{equation*}
  g:(0,\infty) \to \R,\qquad g(t) = C_{1}\beta(t) + C_{2}t^{4} - C_{3}t^{4}\log t-C_{4}t^{2p-2}
\end{equation*}
has a unique critical point $t_0 \in (0, \infty)$ such that $g'(t)>0$ for $t<t_0$ and $g'(t) < 0$ for $t>t_0$.
\end{lemma}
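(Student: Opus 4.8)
The plan is to reduce the claim to the strict monotonicity of a single rescaled function. First I would differentiate,
\[
  g'(t) = C_1\beta'(t) + 4C_2 t^3 - C_3 t^3 - 4C_3 t^3\log t - C_4(2p-2)t^{2p-3},
\]
and then divide by $t^3>0$ to introduce the auxiliary function
\[
  h(t) := \frac{g'(t)}{t^3}
   = C_1\,\frac{\beta'(t)}{t^3} + (4C_2 - C_3) - 4C_3\log t - C_4(2p-2)t^{2p-6}, \qquad t>0.
\]
Since $g'(t)=t^3 h(t)$ and $t^3>0$, the sign of $g'$ coincides with that of $h$ on $(0,\infty)$, so it suffices to prove that $h$ is strictly decreasing and vanishes exactly once.

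The second step is the monotonicity of $h$, handled summand by summand. The constant $4C_2-C_3$ contributes nothing; the term $-4C_3\log t$ is \emph{strictly} decreasing because $C_3>0$; and for $p\geq 3$ the exponent $2p-6\geq 0$ makes $t\mapsto t^{2p-6}$ non-decreasing, so $-C_4(2p-2)t^{2p-6}$ is non-increasing as $C_4\geq 0$. For the first term I would write $\frac{\beta'(t)}{t^3}=\frac{\beta'(t)}{t}\cdot\frac{1}{t^2}$ and invoke the hypothesis on $\beta$: the factor $t\mapsto\frac{\beta'(t)}{t}$ is non-increasing and, by its positive lower bound, strictly positive, while $t\mapsto t^{-2}$ is positive and strictly decreasing, so their product is non-increasing; multiplying by $C_1>0$ preserves this. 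Adding the strictly decreasing logarithmic term to these non-increasing summands yields that $h$ is strictly decreasing on $(0,\infty)$.

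The third step records the boundary behaviour. As $t\to 0^+$, the lower bound $\frac{\beta'(t)}{t}\geq m>0$ gives $\frac{\beta'(t)}{t^3}\geq \frac{m}{t^2}\to+\infty$, while $-4C_3\log t\to+\infty$ and the remaining terms stay bounded (for $p=3$ the last term is the constant $-4C_4$, and for $p>3$ it tends to $0$); hence $h(t)\to+\infty$. As $t\to\infty$, the non-increasing positive function $\frac{\beta'(t)}{t}$ is bounded, so $\frac{\beta'(t)}{t^3}\to 0$, whereas $-4C_3\log t\to-\infty$ dominates and the $C_4$-term is $\leq 0$; hence $h(t)\to-\infty$. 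By continuity, strict monotonicity and the intermediate value theorem there is a unique $t_0\in(0,\infty)$ with $h(t_0)=0$, $h>0$ on $(0,t_0)$ and $h<0$ on $(t_0,\infty)$. Translating back through $g'(t)=t^3 h(t)$ gives that $t_0$ is the unique critical point of $g$ with $g'>0$ on $(0,t_0)$ and $g'<0$ on $(t_0,\infty)$, as required.

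I expect the main technical point to be the correct treatment of the term $C_1\beta'(t)/t^3$: the hypothesis is stated for $\beta'(t)/t$ rather than for $\beta'$ itself, so both the monotonicity of $h$ and the blow-up $h(0^+)=+\infty$ hinge on rewriting this term as $\frac{\beta'(t)}{t}\cdot t^{-2}$ and using, respectively, its non-increasing monotonicity and its strictly positive lower bound. The split into the cases $p=3$ and $p>3$ for the $C_4$-term is routine but must be recorded to guarantee that $h$ is non-increasing and that the boundary limits are unaffected.
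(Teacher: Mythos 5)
Your proof is correct and complete. The paper itself omits the argument ("the proof is elementary, so we omit it"), so there is no written proof to diverge from; your route --- dividing $g'$ by $t^3$, rewriting $C_1\beta'(t)/t^3$ as $C_1\bigl(\beta'(t)/t\bigr)t^{-2}$ to get a strictly decreasing auxiliary function $h$ with $h(0^+)=+\infty$ and $h(\infty)=-\infty$, then applying the intermediate value theorem --- is exactly the intended one, and it is consistent with how the paper uses the same quantity later (in Lemma \ref{lem 5.5} the authors rely precisely on $t \mapsto h_u'(t)/t^3$ being non-increasing, which is your $h$). Your boundary-case bookkeeping (the $C_4$-term being the constant $-4C_4$ when $p=3$, and the two-sided bound $m/t^2 \leq \beta'(t)/t^3 \leq M/t^2$ as $t\to\infty$) is accurate and is the only place where care is genuinely needed.
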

\begin{proof}
The proof is elementary, so we omit it.
\end{proof}

Similarly as in \cite{Du-Weth-2017}, we now consider the Nehari-Pohozaev mainfold
$\mathcal{M}$ defined in \eqref{eq 1.9}, i.e.,
\begin{equation*}
  \mathcal{M}=\left\{u \in E \backslash \{0\}: \: J(u)=0\right\},
\end{equation*}
where $J: E \to \R$ is defined in \eqref{eq 1.8}. It is easy to verify that
\begin{equation*}
  J(u)=2I'(u)u-P(u),
\end{equation*}
where $P(u):= P_1(u)$ is given by Lemma \ref{lem 4.6}. As already noted in the introduction,
by Lemma \ref{lem 4.6} we know that every nontrivial critical point of $I$
is contained in $\mathcal{M}$. If $u\in \mathcal{M}$, then
\begin{equation}\label{eq 5.3}
  I(u) = \frac{1}{4}\int_{\R^2} \Bigl[V(x)+\frac{1}{2}(\nabla V(x),x)\Bigr]u^2\,dx
    +\frac{1}{32 \pi}|u|_2^4 + \frac{b(p-3)}{2p}|u|^{p}_{p},
\end{equation}
and since $p \geq 3$, it follows that
\begin{equation}\label{eq 5.4}
  I(u) \geq \frac{1}{32 \pi} |u|_2^4 >0 \qquad \text{for $u \in \mathcal{M}$}
\end{equation}
according to Lemma \ref{lem 5.1}. With a slight abuse of notation, we define
\begin{equation*}
  m = \inf_{u \in \mathcal{M}}I(u),
\end{equation*}
and we shall show that $m$ is attained by some $u \in \mathcal{M}$ which is
a critical point of $I$ in $E$, and thus a ground state solution of \eqref{eq 1.5}.

For $u\in E$ and $t>0$, we define the rescaled function $Q(t,u) \in E$ by $Q(t,u) = u_t$,
that is,
\begin{equation*}
  Q(t,u)(x)= u_t(x)=t^{2}u(tx) \qquad \text{for $x \in \R^2$.}
\end{equation*}
Next, we state some basic properties of $\mathcal{M}$ and $I$.

\begin{lemma}\label{lem 5.3}
Let $u\in E \backslash \{0\}$, then there exists a unique $t_u \in (0, \infty)$ such that
$Q(t_{u}, u) \in \mathcal{M}$ and $I(Q(t_{u},u))= \max_{t>0} I(Q(t,u))$.
Moreover, the map $E \backslash \{0\} \to (0,\infty)$, $u \mapsto t_u$ is continuous.
\end{lemma}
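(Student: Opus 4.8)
The plan is to reduce everything to the single--variable function $\varphi_u(t):=I(Q(t,u))=I(u_t)$ and then to invoke Lemma~\ref{lem 5.2}. First I would record the scaling behaviour of each term of $I$ under $u\mapsto u_t$, $u_t(x)=t^2u(tx)$ (which maps $E$ into itself for every $t>0$, by the growth bounds in $(V_1)$). Using the substitution $y=tx$ one finds $|\nabla u_t|_2^2=t^4|\nabla u|_2^2$, $\int_{\R^2}V(x)u_t^2\,dx=t^2\int_{\R^2}V(t^{-1}x)u^2\,dx$, $|u_t|_p^p=t^{2p-2}|u|_p^p$, $|u_t|_2^4=t^4|u|_2^4$, and, exactly as in the computation of $N_0(u_n)$ in the proof of Lemma~\ref{lem 4.7}, $N_0(u_t)=t^4N_0(u)-\frac{t^4\log t}{2\pi}|u|_2^4$. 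Substituting these into \eqref{eq 1.6} gives
\begin{equation*}
  \varphi_u(t)=\Bigl(\tfrac12|\nabla u|_2^2+\tfrac14N_0(u)\Bigr)t^4
  -\frac{|u|_2^4}{8\pi}\,t^4\log t-\frac{b}{p}|u|_p^p\,t^{2p-2}
  +\frac{t^2}{2}\int_{\R^2}V(t^{-1}x)u^2\,dx.
\end{equation*}
This matches the template of Lemma~\ref{lem 5.2} with $C_1=1$, $\beta(t)=\frac{t^2}{2}\int_{\R^2}V(t^{-1}x)u^2\,dx$, $C_2=\frac12|\nabla u|_2^2+\frac14N_0(u)$, $C_3=\frac{|u|_2^4}{8\pi}$ and $C_4=\frac{b}{p}|u|_p^p$; note $C_3>0$ since $u\neq0$, and $C_4\geq0$ since $b\geq0$.

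The crux is verifying the hypothesis on $\beta$, and this is where $(V_2)$ enters decisively. Differentiating under the integral sign (legitimate by the bound $|(\nabla V(x),x)|\leq\alpha V(x)$ in $(V_1)$), one computes $\beta'(t)=t\int_{\R^2}\mathcal{V}(t^{-1}x)u^2\,dx$, so that $t^{-1}\beta'(t)=\int_{\R^2}\mathcal{V}(t^{-1}x)u^2\,dx$. Since $(V_2)$ says $s\mapsto\mathcal{V}(sx)$ is non-decreasing, the map $t\mapsto\mathcal{V}(t^{-1}x)$ is non-increasing for each $x$, and integrating against $u^2\geq0$ shows $t\mapsto t^{-1}\beta'(t)$ is non-increasing. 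Moreover $\mathcal{V}\geq V_0$ by \eqref{eq 1.10}, whence $t^{-1}\beta'(t)\geq V_0|u|_2^2>0$, giving the positive lower bound. Lemma~\ref{lem 5.2} then produces a unique critical point $t_u\in(0,\infty)$ of $\varphi_u$ with $\varphi_u'(t)>0$ for $t<t_u$ and $\varphi_u'(t)<0$ for $t>t_u$; in particular $t_u$ is the unique global maximizer.

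Next I would connect critical points of $\varphi_u$ with $\mathcal{M}$. A direct differentiation at $t=1$ shows that $\varphi_u'(1)$ reproduces precisely the expression \eqref{eq 1.8} defining $J$, i.e. $\varphi_u'(1)=J(u)$; combined with the group law $Q(s,Q(t,u))=Q(st,u)$ this upgrades to $\varphi_u'(t)=t^{-1}J(u_t)$ for all $t>0$. Hence $u_t\in\mathcal{M}\iff J(u_t)=0\iff\varphi_u'(t)=0$, so $t_u$ is the unique value with $Q(t_u,u)\in\mathcal{M}$, and $I(Q(t_u,u))=\varphi_u(t_u)=\max_{t>0}I(u_t)$, as claimed.

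For continuity of $u\mapsto t_u$ I would run a squeeze argument using the sign structure just obtained: for every $w\in E\setminus\{0\}$ one has $J(w_t)>0$ for $t<t_w$ and $J(w_t)<0$ for $t>t_w$. Given $u_n\to u$ in $E\setminus\{0\}$ and $\varepsilon>0$ small, the inequalities $J(u_{t_u-\varepsilon})>0>J(u_{t_u+\varepsilon})$ together with the joint continuity of $(t,w)\mapsto J(Q(t,w))$ on $(0,\infty)\times E$ (from Lemma~\ref{lem 2.2} and continuity of $Q$) yield, for large $n$, $J((u_n)_{t_u-\varepsilon})>0$ and $J((u_n)_{t_u+\varepsilon})<0$; the sign structure of $t\mapsto J((u_n)_t)$ then forces $t_u-\varepsilon<t_{u_n}<t_u+\varepsilon$, giving $t_{u_n}\to t_u$. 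I expect the substantive step to be the second paragraph, namely identifying $\beta'(t)=t\int\mathcal{V}(t^{-1}x)u^2\,dx$ and extracting both the monotonicity and the positive lower bound from $(V_2)$; the scaling bookkeeping (especially the $\log t$ term in $N_0(u_t)$) merely demands care, and the continuity is a routine topological squeeze.
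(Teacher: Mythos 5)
Your proposal is correct and follows essentially the same route as the paper: both reduce $t\mapsto I(Q(t,u))$ to the template of Lemma \ref{lem 5.2} with $\beta(t)$ the rescaled potential term (your explicit verification that $\beta'(t)=t\int_{\R^2}\mathcal{V}(t^{-1}x)u^2\,dx$, with monotonicity and lower bound coming from $(V_2)$ and \eqref{eq 1.10}, is exactly what the paper's appeal to $(V_2)$ tacitly uses), then identify $h_u'(t)=t^{-1}J(Q(t,u))$ to locate $\mathcal{M}$, and conclude continuity of $u\mapsto t_u$ by the same sign-squeeze based on continuity of $u\mapsto h_u'(t)$ for fixed $t$. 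The only differences are expository: you spell out the group-law upgrade of $\varphi_u'(1)=J(u)$ and the dominated-differentiation step, which the paper leaves implicit.
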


\begin{proof}
For fixed $u \in E \backslash \{0\}$, we define the function $h_u: (0, \infty) \to \R$,
$h_u(t):=I(Q(t,u))$. A simple computation gives
\begin{align*}
  h_u(t)&=\frac{t^{4}}{2}\int_{\mathbb{R}^{2}}|\nabla u|^{2} \,dx
    +\frac{t^{2}}{2}\int_{\mathbb{R}^{2}} V(t^{-1}x)u^{2} \,dx
    +\frac{t^{4}}{8 \pi}\int_{\mathbb{R}^{2}}\int_{\mathbb{R}^{2}}
    \log\left(|x-y|\right)u^{2}(x)u^{2}(y)\,dxdy \notag \\
  &\quad-\frac{t^{4}\log t}{8 \pi}\left(\int_{\mathbb{R}^{2}}| u|^{2}dx\right)^{2}
    -\frac{bt^{2 p-2}}{p}\int_{\mathbb{R}^{2}}|u|^{p}\,dx.
\end{align*}
Consider now the function $\beta: (0, \infty) \rightarrow \R$ defined as
\begin{equation*}
  \beta(t)= t^2 \int_{\mathbb{R}^{2}} V(t^{-1}x)u^{2} \,dx.
\end{equation*}
By $(V_2)$ and Lemma \ref{lem 5.2}, there exists a unique critical point $t_u \in (0, \infty)$ of $h_u$
such that
\begin{equation}\label{eq 5.5}
  h_u'(t)>0 \quad \text{for $t \in (0, t_u)$} \qquad \text{and}\qquad
  h_u'(t)<0 \quad \text{for $t \in (t_u, \infty)$.}
\end{equation}
Since
\begin{equation*}
  h_u'(t)=  \frac{J(Q(t,u))}{t} \qquad \text{for $t>0$,}
\end{equation*}
we obtain that $\max_{t>0} h_u(t)$ is achieved at a unique $t = t_u$ so that
$h_u'(t_u) = 0$ and $Q(t_u, u) \in \mathcal{M}$. Combining \eqref{eq 5.5} with the fact that
the map $E \setminus \{0\} \to \R, \: u \mapsto h_u'(t)$ is continuous for fixed $t>0$,
we also infer that the map $E \backslash \{0\} \to (0,\infty)$, $u \mapsto t_u$
is continuous, as claimed.
\end{proof}

From Lemma \ref{lem 5.3}, we immediately deduce the following corollary.

\begin{corollary}\label{coro 5.4}
The Nehari-Pohozaev manifold $\mathcal{M}$ is not empty, and the infimum of $I$ on $\mathcal{M}$
obeys the following minimax characterization:
\begin{equation*}
  \inf_{u\in \mathcal{M}} I(u) = \inf_{u\in E\backslash \{0\}} \sup_{t>0} I(u_t).
\end{equation*}
\end{corollary}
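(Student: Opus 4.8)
The plan is to read off both assertions directly from Lemma \ref{lem 5.3}, which supplies, for each $u \in E \setminus \{0\}$, a unique $t_u \in (0,\infty)$ with $u_{t_u} = Q(t_u,u) \in \mathcal{M}$ and $I(u_{t_u}) = \max_{t>0} I(u_t)$. First I would dispatch the non-emptiness: the set $E \setminus \{0\}$ is nonempty (it contains, e.g., every nonzero $v \in C_0^\infty(\R^2)$), so fixing such a $u$ and applying the lemma produces a point $u_{t_u} \in \mathcal{M}$, whence $\mathcal{M} \neq \emptyset$.

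For the minimax identity I would argue via two opposite inequalities. For the upper bound, note that for every $u \in E \setminus \{0\}$ the lemma gives $u_{t_u} \in \mathcal{M}$ together with $I(u_{t_u}) = \max_{t>0} I(u_t) = \sup_{t>0} I(u_t)$, whence
\[
  \inf_{v \in \mathcal{M}} I(v) \le I(u_{t_u}) = \sup_{t>0} I(u_t);
\]
taking the infimum over all $u \in E \setminus \{0\}$ yields $\inf_{\mathcal{M}} I \le \inf_{u \in E \setminus \{0\}} \sup_{t>0} I(u_t)$. For the reverse bound, the crucial remark is that any $w \in \mathcal{M}$ satisfies $t_w = 1$: since $h_w'(t) = J(Q(t,w))/t$ and $Q(1,w) = w$, the membership $J(w)=0$ forces $h_w'(1)=0$, and the uniqueness of the critical point in Lemma \ref{lem 5.3} identifies it as $t_w = 1$. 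Therefore $I(w) = I(w_{t_w}) = \max_{t>0} I(w_t) = \sup_{t>0} I(w_t) \ge \inf_{u \in E \setminus \{0\}} \sup_{t>0} I(u_t)$, and passing to the infimum over $w \in \mathcal{M}$ gives the matching lower bound. Combining the two inequalities yields the asserted equality.

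Because all the analytic substance --- existence, uniqueness and the maximizing property of $t_u$, which in turn rest on $(V_2)$ and the shape of $h_u$ via Lemma \ref{lem 5.2} --- is already encapsulated in Lemma \ref{lem 5.3}, this corollary is essentially formal and I anticipate no genuine obstacle. The only point demanding a moment's care is the identification $t_w = 1$ for $w \in \mathcal{M}$, which is what glues the constrained infimum over $\mathcal{M}$ to the minimax value, and it follows at once from the uniqueness clause of the lemma.
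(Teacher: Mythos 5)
Your proof is correct and follows exactly the route the paper intends: the paper states that Corollary \ref{coro 5.4} is ``immediately deduced'' from Lemma \ref{lem 5.3}, and your two-inequality argument, including the key identification $t_w=1$ for $w\in\mathcal{M}$ via $h_w'(1)=J(w)=0$ and the uniqueness clause of the lemma, is precisely the unwritten formal verification behind that deduction.
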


In the following, we present a general result which will be used later.
\begin{lemma}\label{lem 5.5}
Let $u \in E$. Then we have
\begin{equation*}
  I\left(Q(t, u)\right) \leq I(u) - \frac{1-t^4}{4} J(u)\qquad \text{for all $t>0$}.
\end{equation*}
\end{lemma}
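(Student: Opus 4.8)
The plan is to work with the fibering map $h_u(t) := I(Q(t,u)) = I(u_t)$ introduced in the proof of Lemma \ref{lem 5.3}, and to reduce the claimed inequality to a pointwise sign analysis of $h_u'$. Writing $h_u(t) - h_u(1) = \int_1^t h_u'(s)\,ds$ and noting that $\tfrac{t^4-1}{4} = \int_1^t s^3\,ds$, the target inequality
\[
  h_u(t) \le h_u(1) - \frac{1-t^4}{4} J(u) = I(u) + \int_1^t s^3 J(u)\,ds
\]
becomes equivalent to $\int_1^t G(s)\,ds \le 0$ for all $t>0$, where $G(s) := h_u'(s) - s^3 J(u)$. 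Since $h_u'(s) = J(Q(s,u))/s$ is already recorded in Lemma \ref{lem 5.3}, we have $h_u'(1) = J(u)$ and hence $G(1)=0$; it therefore suffices to show $G(s) \ge 0$ on $(0,1)$ and $G(s) \le 0$ on $(1,\infty)$, since integrating then yields $\int_1^t G \le 0$ in both cases $t<1$ and $t>1$.

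Next I would compute $G$ explicitly. Differentiating the formula for $h_u$ from Lemma \ref{lem 5.3} and subtracting $s^3 J(u)$ (with $J$ as in \eqref{eq 1.8}), the pure gradient term and the logarithmic-convolution term $N_0$ cancel exactly, leaving
\[
  G(s) = \left(\tfrac12 \beta'(s) - s^3\!\int_{\R^2}\!\mathcal{V}(x)u^2\,dx\right) - \frac{s^3\log s}{2\pi}|u|_2^4 - \frac{2b(p-1)}{p}\bigl(s^{2p-3}-s^3\bigr)|u|_p^p,
\]
where $\beta(s) = s^2\int_{\R^2} V(s^{-1}x)u^2\,dx$ is the function from Lemma \ref{lem 5.3}. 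The last two terms are elementary: since $\log s<0$ for $s<1$ and $\log s>0$ for $s>1$, the logarithmic term has the required sign; and writing $s^{2p-3}-s^3 = s^3(s^{2(p-3)}-1)$, the hypothesis $p\ge 3$ together with $b\ge 0$ forces this term to be $\ge 0$ on $(0,1)$ and $\le 0$ on $(1,\infty)$ as well (it vanishes identically when $p=3$).

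The hard part is the potential term in parentheses, and this is exactly where $(V_2)$ enters. Using $(\nabla V(s^{-1}x),x) = s(\nabla V(s^{-1}x), s^{-1}x)$, a direct computation of $\beta'$ gives the clean identity $\tfrac12\beta'(s) = s\int_{\R^2}\mathcal{V}(s^{-1}x)u^2\,dx$, so the potential term equals $s\int_{\R^2}[\mathcal{V}(s^{-1}x) - s^2\mathcal{V}(x)]u^2\,dx$. For $s<1$ I would use $s^{-1}>1$ together with the monotonicity of $t\mapsto\mathcal{V}(tx)$ from $(V_2)$ to get $\mathcal{V}(s^{-1}x)\ge\mathcal{V}(x)$, while $\mathcal{V}(x)\ge V_0>0$ from \eqref{eq 1.10} gives $s^2\mathcal{V}(x)\le\mathcal{V}(x)$; hence the integrand is nonnegative. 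For $s>1$ the same two facts run in the opposite direction, $\mathcal{V}(s^{-1}x)\le\mathcal{V}(x)\le s^2\mathcal{V}(x)$, making the integrand nonpositive. This pins down the sign of $G$ on each interval and closes the argument. The only points requiring care are the differentiability of $h_u$ with $h_u'(s)=J(u_s)/s$ (already available from Lemma \ref{lem 5.3}) and the use of $\mathcal{V}\ge V_0>0$ in \eqref{eq 1.10}, which is precisely where positivity of the potential, and not merely its monotonicity, is needed.
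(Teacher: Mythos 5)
Your proof is correct and is essentially the paper's own argument in a slightly different packaging: your function $G(s)=h_u'(s)-s^3J(u)$ equals $-\varphi_u'(s)$ for the paper's comparison function $\varphi_u(t)=I(u)-I(Q(t,u))-\frac{1-t^4}{4}J(u)$, and your sign analysis of $G$ on $(0,1)$ and $(1,\infty)$ is exactly the comparison of $t\mapsto h_u'(t)/t^3$ with its value $J(u)$ at $t=1$, which the paper deduces from the monotonicity of that quotient. The only difference is that you verify the required sign condition by an explicit term-by-term computation (cancellation of the gradient and $N_0$ terms, the identity $\tfrac12\beta'(s)=s\int_{\R^2}\mathcal{V}(s^{-1}x)u^2\,dx$, and the use of $(V_2)$, $\mathcal{V}\ge V_0$ from \eqref{eq 1.10}, and $p\ge3$), which correctly fills in the details the paper leaves implicit behind the cited monotonicity fact.
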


\begin{proof}
For $u \in E$, define the function $\varphi_u: (0, \infty) \rightarrow \R$ as
\begin{equation*}
  \varphi_u (t) = I(u) - I\left(Q(t, u)\right) - \frac{1-t^4}{4} J(u).
\end{equation*}
We can easily check that $\varphi_u (1) = 0$ and
\begin{equation*}
  \varphi'_u (t) = t^3\left[h'_u(1) - \frac{h_u'(t)}{t^3}\right] \qquad \text{for $t>0$.}
\end{equation*}
Combining this with the fact that the function $(0, \infty) \rightarrow \R$, $ t \mapsto \frac{h_u'(t)}{t^3}$
is non-increasing on $(0, \infty)$, we have
\begin{equation*}
  \varphi'_u (t) \leq 0 \quad \text{for $t \in (0, 1)$} \qquad \text{and} \qquad
  \varphi'_u (t) \geq 0 \quad \text{for $t \in (1,\infty)$}.
\end{equation*}
This says that
\begin{equation*}
  \varphi_u(t) \geq \varphi_u(1)=0 \qquad \text{for $t>0$,}
\end{equation*}
and thus the claim follows.
\end{proof}

Similar to Lemma \ref{lem 4.7}, we have the following lemma which shows that
the minimizing sequence of $m$ is bounded in $H_V$.

\begin{lemma}\label{lem 5.6}
Let $\{u_n\} \subset \M$ be a minimizing sequence for $I$.
Then $\{u_n\}$ is bounded in $H_V$ and moreover,
\begin{equation*}
  \sup_{n \in \N} \int_{\R^2}(\nabla V(x),x)u_n^2\,dx < \infty.
\end{equation*}
\end{lemma}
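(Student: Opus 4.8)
The plan is to read off every required bound from just two pieces of data: the minimizing property $I(u_n)\to m$, which via the representation \eqref{eq 5.3} of $I$ on $\mathcal{M}$ controls several quantities at once, and the defining constraint $J(u_n)=0$, which I will use to control the gradient. Since $m<\infty$ (as $\mathcal M\neq\emptyset$ by Corollary~\ref{coro 5.4} and $I\ge 0$ on $\mathcal M$ by \eqref{eq 5.4}), the minimizing sequence satisfies $\sup_n I(u_n)=:C<\infty$.

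First I would exploit \eqref{eq 5.3}. By Lemma~\ref{lem 5.1} we have $(\nabla V(x),x)\ge 0$, while $V\ge V_0>0$, $b\ge 0$ and $p\ge 3$, so \emph{every} term on the right-hand side of \eqref{eq 5.3} is nonnegative and hence individually bounded above by $C$. This already gives $\int_{\R^2}V(x)u_n^2\,dx\le 4C$, whence $|u_n|_2$ is bounded because $V\ge V_0$; it gives the second assertion $\sup_n\int_{\R^2}(\nabla V(x),x)u_n^2\,dx\le 8C<\infty$; and, when $p>3$, it gives $|u_n|_p^p\le 2pC/(b(p-3))$ (this $L^p$ term is simply absent when $b=0$).

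Next I would bring in $J(u_n)=0$ to bound $|\nabla u_n|_2$. Writing \eqref{eq 1.8} with $N_0=N_1-N_2$ and discarding the nonnegative terms $\int_{\R^2}\mathcal{V}(x)u_n^2\,dx$ (recall $\mathcal V\ge V_0>0$ by \eqref{eq 1.10}) and $N_1(u_n)\ge 0$, the constraint yields
\[
  2|\nabla u_n|_2^2 \le \frac{2b(p-1)}{p}|u_n|_p^p + \frac{1}{8\pi}|u_n|_2^4 + N_2(u_n).
\]
The middle term is already bounded. For $N_2$ I would combine \eqref{eq 2.2} with the Gagliardo–Nirenberg inequality to get $N_2(u_n)\le C_0|u_n|_{8/3}^4\le C|u_n|_2^3|\nabla u_n|_2\le C'|\nabla u_n|_2$, using the $L^2$ bound. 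The $L^p$ term is bounded outright when $p>3$; when $p=3$ it is estimated directly by $|u_n|_3^3\le C|u_n|_2^2|\nabla u_n|_2\le C'|\nabla u_n|_2$. In every case the right-hand side is $\le C_1+C_2|\nabla u_n|_2$, so $2|\nabla u_n|_2^2\le C_1+C_2|\nabla u_n|_2$ forces $|\nabla u_n|_2$ to stay bounded.

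Finally, combining this with $\int_{\R^2}V(x)u_n^2\,dx\le 4C$ shows $\|u_n\|_V^2=|\nabla u_n|_2^2+\int_{\R^2}V(x)u_n^2\,dx$ is bounded, i.e. $\{u_n\}$ is bounded in $H_V$, while the moment bound on $(\nabla V(x),x)$ was already secured in the first step. I expect the only delicate point to be the borderline case $p=3$: there the coefficient $b(p-3)/(2p)$ in \eqref{eq 5.3} vanishes, so the minimizing property gives \emph{no} control of $|u_n|_p^p$, and one must instead lean on the Gagliardo–Nirenberg estimate together with the already-established $L^2$ bound. This is precisely the feature that keeps the present $p\ge 3$ regime simpler than the $p<3$ situation of Lemma~\ref{lem 4.7}, where the analogous identity carries an unfavorable sign and forces the rescaling argument.
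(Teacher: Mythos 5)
Your proof is correct, and at the crucial step it takes a genuinely different route from the paper. The first stage coincides: both you and the authors read off from \eqref{eq 5.3} (together with Lemma \ref{lem 5.1} and $(V_0)$, which make every summand nonnegative when $p\ge 3$) the bounds on $\int_{\R^2}V(x)u_n^2\,dx$, on $\int_{\R^2}(\nabla V(x),x)u_n^2\,dx$, on $|u_n|_2$, and, when $p>3$ and $b>0$, on $|u_n|_p$. The divergence lies in how the gradient is controlled. The paper splits into two cases: for $p>3$ and $b>0$ it bounds $N_2(u_n)$ by H\"older interpolation between $L^2$ and $L^p$ and then reads the $H_V$-bound from the identity $2\|u_n\|_V^2+N_1(u_n)=4I(u_n)+N_2(u_n)+\frac{4b}{p}|u_n|_p^p$; in the remaining case ($p=3$ or $b=0$) it argues by contradiction with the dilation $v_n(x)=t_n^2u_n(t_nx)$, $t_n=|\nabla u_n|_2^{-1/2}$, exploiting the scaling identity for $N_0$ and the decay $|v_n|_2=o\bigl(t_n^{1/2}\bigr)$ to reach the absurdity $2\le o(1)$. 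You instead extract from the constraint $J(u_n)=0$, after discarding the nonnegative terms $\int_{\R^2}\mathcal{V}(x)u_n^2\,dx$ (via \eqref{eq 1.10}) and $N_1(u_n)$, the single inequality $2|\nabla u_n|_2^2\le \frac{2b(p-1)}{p}|u_n|_p^p+\frac{1}{8\pi}|u_n|_2^4+N_2(u_n)$, whose right-hand side grows at most linearly in $|\nabla u_n|_2$ by Gagliardo--Nirenberg (for $N_2$ always, and for $|u_n|_3^3$ in the borderline case $p=3$, the $L^p$ term being bounded outright when $p>3$, $b>0$, and absent when $b=0$); the resulting subquadratic inequality in $|\nabla u_n|_2$ then closes the argument directly, with no contradiction and no rescaling. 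Your version is more elementary and treats all cases $p\ge 3$ uniformly; what the paper's dilation machinery buys is precisely the regime $2<p<3$ of Lemma \ref{lem 4.7}, where the coefficient of $|u_n|_p^p$ in the analogue of \eqref{eq 5.3} turns negative, the a priori $L^2$ bound is no longer available first, and a direct Gagliardo--Nirenberg estimate would feed the unbounded factor $|\nabla u_n|_2^{p-2}$ back into the inequality --- exactly the obstruction your closing remark identifies.
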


\begin{proof}
From \eqref{eq 5.3}, we see that
\begin{equation}\label{eq 5.6}
  m + o(1) = \frac{1}{8}\int_{\R^2} \bigl[2V(x)+(\nabla V(x),x)\bigr]u_n^2\,dx
    +\frac{1}{32 \pi}|u_n|_2^4 + \frac{b(p-3)}{2p}|u_n|^{p}_{p}.
\end{equation}
Since $p \geq 3$, we further have, by $(V_0)$ and Lemma \ref{lem 5.1},
\begin{equation}\label{eq 5.7}
  \sup_{n \in \N} \int_{\R^2} V(x)u_n^2\, dx < \infty
  \qquad \text{and} \qquad
  \sup_{n \in \N} \int_{\R^2}(\nabla V(x),x)u_n^2\,dx < \infty.
\end{equation}
Next, we distinguish the following two cases:

\emph{Case 1: $p>3$ and $b>0$}. In this case, \eqref{eq 5.6} and \eqref{eq 5.7}
imply that $\{u_{n}\}$ is bounded in $L^{2}(\R^2)$ and in $L^{p}(\mathbb{R}^{2})$.
Using \eqref{eq 2.2} and the H\"{o}lder inequality, we then find that
\begin{equation*}
  N_2(u_n) \leq C_0 |u_n|^4_{\frac{8}{3}} \leq C_0 |u_n|^{4(1-\theta_0)}_2 |u_n|^{4\theta_0}_p \leq C_1,
\end{equation*}
where $\theta_0 = \frac{p}{4(p-2)}$. Consequently, we have
\begin{equation*}
  2\|u_n\|_V^2 + N_1(u_n) = 4m + N_2(u_n) + \frac{4b}{p} |u_n|_p^p + o(1),
\end{equation*}
which means that $\{u_n\}$ is bounded in $H_V$. By \eqref{eq 5.7}, the lemma is proved.

\emph{Case 2: $p=3$ or $b=0$}. We first claim that
\begin{equation}\label{eq 5.8}
  \sup_{n \in \N}|\nabla u_n |_2 < \infty.
\end{equation}
On the contrary, suppose that \eqref{eq 5.8} does not hold.
We then have, up to a subsequence,
\begin{equation*}
  |\nabla u_n|_2 \rightarrow \infty  \qquad \text{as $n \rightarrow \infty$}.
\end{equation*}
Let $t_n:=|\nabla u_n|_2^{-1/2}$ for $n \in \mathbb{N}$, so that $t_n \rightarrow 0$
as $n \rightarrow \infty$. For $n \in \mathbb{N}$, we define the rescaled function
$v_n \in E$ by $v_n(x):= t_n^2 u_n(t_n x)$ for $x \in \R^2$, and
we can easily conclude that
\begin{equation}\label{eq 5.9}
  |\nabla v_n|_2 =1 \qquad \text{and} \qquad |v_n|_q ^q = t_n^{2q-2} |u_n|_q ^q
  \quad \text{with }  1 \leq q < \infty.
\end{equation}
By the Gagliardo-Nirenberg inequality, we further obtain
\begin{equation}\label{eq 5.10}
  |v_n|^p_p \leq C_4 |v_n|^2_2 |\nabla v_n|^{p-2}_2 = C_4 |v_n|^2_2
  \qquad \text{for $n \in \mathbb{N}$}.
\end{equation}
Multiplying \eqref{eq 5.6} by $t^4_n$, we can derive from \eqref{eq 5.9} and \eqref{eq 5.10} that
\begin{equation*}
  m t_n^4 + o(t_n^4) \geq \frac{1}{32 \pi} t_n^4 |u_n|^4_2  = \frac{1}{32 \pi} |v_n|^4_2,
\end{equation*}
and so
\begin{equation}\label{eq 5.11}
  |v_n|_2 = o\bigl(t_{n}^{1/2}\bigr) \qquad \text{as $n \rightarrow \infty$}.
\end{equation}
Moreover, by \eqref{eq 5.7} we also have
\begin{equation*}
  0 = t_n^4 J(u_n) = t_n^4\left( 2|\nabla u_n|_2^2 + N_0(u_n) - \frac{2b(p-1)}{p} |u_n|_p^p \right)+ o(1).
\end{equation*}
Combining this with \eqref{eq 5.9}$-$\eqref{eq 5.11} and the fact that
\begin{equation*}
  N_{0}(u_{n})= \frac{t_n^4}{2\pi} \int_{\R^2} \int_{\R^2}
   \log \left(|t_n x-t_n y|\right)u_n^{2}(t_n x)u_n^{2}(t_n y)\:dxdy
  = t_n^{-4}\left( N_0(v_n) + \frac{\log t_n}{2\pi}|v_n|_2^4 \right),
\end{equation*}
we infer that
\begin{equation*}
  0 =  2 + N_0(v_n) +  \frac{\log t_n}{2 \pi} |v_n|_2^4  - \frac{2b(p-1)}{p} t_n^{6-2p}|v_n|_p^p + o(1)
    = 2 + N_0(v_n) + o(1).
\end{equation*}
It then follows from \eqref{eq 2.2}, \eqref{eq 5.11} and the Gagliardo-Nirenberg inequality that
\begin{equation*}
  2 \leq 2 + N_1(v_n) = N_2(v_n)+ o(1) \leq C_0 |v_n|^4_{\frac{8}{3}}+ o(1) \leq C_5 |v_n|^3_2+ o(1) = o(1).
\end{equation*}
This is a contradiction. So, \eqref{eq 5.8} holds. Using \eqref{eq 5.7} again,
the lemma is thus finished.
\end{proof}

With the aid of Lemma \ref{lem 5.6}, we shall establish the following lemma which shows that $m>0$.
\begin{lemma}\label{lem 5.7}
There results $m > 0$.
\end{lemma}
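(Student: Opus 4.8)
The plan is to argue by contradiction. Since $\eqref{eq 5.4}$ already gives $I(u)\ge \frac{1}{32\pi}|u|_2^4>0$ for every $u\in\M$, we have $m\ge 0$, so it suffices to rule out $m=0$. Assume $m=0$ and pick a minimizing sequence $\{u_n\}\subset\M$ with $I(u_n)\to 0$. By Lemma \ref{lem 5.6} this sequence is bounded in $H_V$, so in particular $|\nabla u_n|_2$ stays bounded. Now read off the representation $\eqref{eq 5.3}$: by Lemma \ref{lem 5.1} one has $(\nabla V(x),x)\ge 0$, while $V\ge V_0>0$ and (since $p\ge 3$) $\frac{b(p-3)}{2p}\ge 0$, so all three summands of $I(u_n)$ are nonnegative. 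Hence each of them tends to $0$; in particular $\frac{1}{32\pi}|u_n|_2^4\to 0$, which yields $|u_n|_2\to 0$. Note this first step is uniform in the cases $b=0$ and $p=3$, where the last summand simply vanishes.

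Next I would exploit the constraint together with the Gagliardo--Nirenberg inequality. Because $|u_n|_2\to 0$ and $|\nabla u_n|_2$ is bounded, the estimate $\eqref{eq 2.2}$ and Gagliardo--Nirenberg give $N_2(u_n)\le C_0|u_n|_{8/3}^4\le C|u_n|_2^3|\nabla u_n|_2\to 0$ and $|u_n|_p^p\le C|u_n|_2^2|\nabla u_n|_2^{p-2}\to 0$. Rewriting $J(u_n)=0$ via $N_0=N_1-N_2$ (recall $J$ is defined in $\eqref{eq 1.8}$) produces
\[
 2|\nabla u_n|_2^2 + \int_{\R^2}\mathcal{V}(x)u_n^2\,dx + N_1(u_n)
  = N_2(u_n) + \frac{1}{8\pi}|u_n|_2^4 + \frac{2b(p-1)}{p}|u_n|_p^p .
\]
Since the right-hand side tends to $0$ and the left-hand side is at least $2|\nabla u_n|_2^2\ge 0$, I conclude that $|\nabla u_n|_2\to 0$ as well.

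The decisive step, which I expect to be the main obstacle, is to turn the identity above into a contradiction. Bounding the left-hand side from below by $\eqref{eq 1.10}$ (so that $\int \mathcal{V}(x)u_n^2\,dx\ge V_0|u_n|_2^2$) and $N_1\ge 0$, I obtain $V_0|u_n|_2^2\le N_2(u_n)+\frac{1}{8\pi}|u_n|_2^4+\frac{2b(p-1)}{p}|u_n|_p^p$. Dividing by $|u_n|_2^2>0$ and inserting the Gagliardo--Nirenberg bounds gives
\[
 V_0 \le C|u_n|_2\,|\nabla u_n|_2 + \frac{1}{8\pi}|u_n|_2^2 + C|\nabla u_n|_2^{\,p-2},
\]
and the right-hand side tends to $0$ because $|u_n|_2\to 0$, $|\nabla u_n|_2\to 0$ and $p-2>0$. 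This contradicts $V_0>0$ and completes the proof. The subtlety to emphasize is that the Nehari--Pohozaev constraint only delivers the \emph{quadratic} lower bound $V_0|u_n|_2^2$ on the left, whereas all the competing terms on the right are of strictly higher homogeneity in the pair $(|u_n|_2,|\nabla u_n|_2)$; this is exactly why it is essential to first secure $|\nabla u_n|_2\to 0$ (not merely its boundedness) before performing the division, so that the term $C|\nabla u_n|_2^{\,p-2}$ is killed.
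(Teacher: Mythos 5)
Your proof is correct and follows essentially the same route as the paper: assume $m=0$, use \eqref{eq 5.3} together with Lemma \ref{lem 5.1} to get $|u_n|_2\to 0$, kill $N_2(u_n)$ and $|u_n|_p^p$ via Gagliardo--Nirenberg and the $H_V$-boundedness from Lemma \ref{lem 5.6}, and then read the constraint $J(u_n)=0$ (with $N_0=N_1-N_2$ and $\mathcal{V}\geq V_0$, $N_1\geq 0$) to force $|\nabla u_n|_2\to 0$. The only (harmless) deviation is the endgame: the paper first establishes the uniform lower bound $\inf_{\mathcal{M}}\|u\|^2>0$ in \eqref{eq 5.12} and reaches the contradiction by showing $\|u_n\|\to 0$, whereas you divide the Nehari--Pohozaev identity by $|u_n|_2^2>0$ and contradict $V_0>0$ directly, correctly noting that this requires $|\nabla u_n|_2\to 0$ (not mere boundedness) to dispose of the term of order $|\nabla u_n|_2^{p-2}$; both closings are valid.
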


\begin{proof}
For each $u\in \mathcal{M}$, by \eqref{eq 1.10}, \eqref{eq 2.2} and the Sobolev inequalities we have
\begin{equation*}
  2|\nabla u|_2^2 + V_0 |u|_2^2  \leq \frac{2b(p-1)}{p} |u|_p^p - N_0 (u) + \frac{1}{8 \pi} |u|_2^4
  \leq C_1\|u\|^p + C_2\|u\|^4.
\end{equation*}
Since $u \neq 0$ and $p>2$, we obtain
\begin{equation}\label{eq 5.12}
  \inf_{u\in \mathcal{M}} \|u\|^2 >0.
\end{equation}
From \eqref{eq 5.4}, we see that $m \geq 0$. If $m >0$ occurs, the lemma is thus proved.
Therefore, arguing by contradiction, we suppose that $m=0$. Let $\{u_n\} \subset \mathcal{M}$
be a minimizing sequence for $I$, that is, $I(u_n) \rightarrow 0$ as $n \rightarrow \infty$.
It then follows from \eqref{eq 5.3} and Lemma \ref{lem 5.1} that
\begin{equation*}
  \int_{\R^2} V(x)u_n^2\, dx \rightarrow 0, \qquad
  \int_{\R^2}(\nabla V(x),x)u_n^2\, dx \rightarrow 0 \qquad \text{and} \qquad
  |u_n|_2 \rightarrow 0,
\end{equation*}
and therefore
\begin{equation*}
  N_2(u_n) \rightarrow 0 \qquad \text{and} \qquad
  |u_n|_p \rightarrow 0 \qquad \text{as  $n \rightarrow \infty$}
\end{equation*}
by \eqref{eq 2.2}, Lemma \ref{eq 5.6} and the Gagliardo-Nirenberg inequality.
Since $\{u_n\} \subset \mathcal{M}$, we further obtain
\begin{equation*}
  |\nabla u_n|_2 \rightarrow 0 \qquad \text{and} \qquad
  N_1(u_n) \rightarrow 0 \qquad \text{as  $n \rightarrow \infty.$}
\end{equation*}
Consequently, we have $\|u_n\| \rightarrow 0$ as $n \rightarrow \infty$,  contradicting \eqref{eq 5.12}.
This completes the proof.
\end{proof}

The last step consists in proving that $m>0$ is attained, and every minimizer of $m$
is a critical point of $I$ in the whole space $E$ which does not change sign.

\begin{proposition}\label{prop 5.8}
The level $m$ is achieved, and every minimizer of $m$ is a critical point
of $I$ in $E$ which does not change sign in $\R^2$.
\end{proposition}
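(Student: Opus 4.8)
The plan is to follow the two–step scheme of Proposition \ref{prop 3.4}, but with the scaling $t \mapsto tu$ replaced everywhere by the dilation $Q(t,\cdot)$ adapted to the Nehari--Pohozaev geometry, and I will split the argument into attainment (Step~1), criticality (Step~2) and the sign property (Step~3).

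\emph{Step 1 (attainment).} I would start from a minimizing sequence $\{u_n\} \subset \mathcal{M}$ with $I(u_n) \to m$. By Lemma \ref{lem 5.6} it is bounded in $H_V$, so Lemma \ref{lem 2.1} yields, along a subsequence, $u_n \rightharpoonup u$ in $H_V$, $u_n \to u$ in every $L^s(\R^2)$ with $s \ge 2$, and $u_n \to u$ a.e. The first thing to prove is $u \neq 0$: if $u = 0$, strong $L^s$-convergence forces $|u_n|_p, |u_n|_2 \to 0$ and $N_2(u_n) \to 0$ by \eqref{eq 2.2}, and then $J(u_n)=0$ together with $\mathcal{V} \ge V_0$ (see \eqref{eq 1.10}) and $N_1 \ge 0$ gives $|\nabla u_n|_2 \to 0$, hence $\|u_n\| \to 0$, contradicting \eqref{eq 5.12}; this is exactly the mechanism of Lemma \ref{lem 5.7}. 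Once $u \neq 0$ is known, I would bound $N_1(u_n) = B_1(u_n^2,u_n^2)$ from above using $J(u_n)=0$ and $\mathcal{V} \ge V_0$, so that Lemma \ref{lem 2.3} makes $\{|u_n|_*\}$ bounded; thus $\{u_n\}$ is bounded in $E$ and, after a further subsequence, $u_n \rightharpoonup u$ in $E$ with $u \in E$.

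\emph{Step 1 (completion).} Let $t_u \in (0,\infty)$ be the unique number with $Q(t_u,u) \in \mathcal{M}$ provided by Lemma \ref{lem 5.3}. I would evaluate $I(Q(t_u,u_n))$ through the explicit formula for $h_{u_n}(t_u)$ computed in Lemma \ref{lem 5.3} and check term by term that it is weakly lower semicontinuous at $u$: the kinetic term $|\nabla\cdot|_2^2$, the dilated potential term $\int_{\R^2} V(x/t_u)(\cdot)^2\,dx$ (via Fatou, since $V \ge 0$ and $u_n \to u$ a.e.) and the scaled $N_0$-term (using weak lower semicontinuity of $N_1$ in Lemma \ref{lem 2.2}(iv) and continuity of $N_2$) all have liminf no smaller than their value at $u$, while the remaining terms converge under strong $L^2$- and $L^p$-convergence. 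This gives $I(Q(t_u,u)) \le \liminf_n I(Q(t_u,u_n))$. Since each $u_n \in \mathcal{M}$ is the maximum point of its own fibre (Lemma \ref{lem 5.3} with $t_{u_n}=1$), one has $I(Q(t_u,u_n)) \le I(u_n)$, whence $I(Q(t_u,u)) \le \liminf_n I(u_n) = m$; as $Q(t_u,u) \in \mathcal{M}$ forces $I(Q(t_u,u)) \ge m$, the value $m$ is attained at $\bar u := Q(t_u,u)$. I expect this lower-semicontinuity-through-dilation step to be the main obstacle, the delicate point being control of $\int_{\R^2} V(x/t_u)u_n^2\,dx$, which is where $(V_1)/(V_2)$ enter (at most polynomial growth of $V$ along rays, so that $Q(t,\cdot)$ maps $E$ into $E$).

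\emph{Steps 2 and 3 (criticality and sign).} For an arbitrary minimizer $\bar u$ and any $v \in E$, I would set $\psi(s,t) := J(Q(t,\bar u + sv))$ for small $s$ and $t>0$. Since $\bar u \in \mathcal{M}$ gives $h_{\bar u}'(1)=0$ and the fibre has a strict maximum at $t=1$ (Lemma \ref{lem 5.3}), one has $\partial_t\psi(0,1) = \frac{d}{dt}\big|_{t=1}\!\big(t\,h_{\bar u}'(t)\big) = h_{\bar u}''(1) \neq 0$, so the implicit function theorem produces a $C^1$ map $s \mapsto t(s)$ with $t(0)=1$ and $Q(t(s),\bar u + sv) \in \mathcal{M}$. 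Then $\gamma(s) := I(Q(t(s),\bar u + sv))$ has a minimum at $s=0$, hence $\gamma'(0)=0$; differentiating and using $\frac{d}{dt}\big|_{t=1}I(Q(t,\bar u)) = h_{\bar u}'(1)=0$ to kill the $t'(0)$-contribution leaves $I'(\bar u)v = 0$. As $v \in E$ is arbitrary, $I'(\bar u)=0$, so $\bar u$ is a ground state solution. Finally, because $J$ and $I$ depend on $\bar u$ only through $|\nabla \bar u|$, $\bar u^2$ and $|\bar u|^p$, the function $|\bar u|$ also lies in $\mathcal{M}$ with $I(|\bar u|)=m$ and is therefore likewise a critical point; elliptic regularity together with the strong maximum principle, exactly as in Lemma \ref{lem 3.5}, then yields $|\bar u|>0$, so $\bar u$ does not change sign.
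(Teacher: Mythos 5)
Your Steps 1 and 3 are essentially correct. Step 1 is a legitimate variant of the paper's part (i): the paper keeps the weak limit $u$ itself, shows $J(u)\le 0$ by weak lower semicontinuity, and excludes $J(u)<0$ by combining Lemma \ref{lem 5.5} with the reduced expression \eqref{eq 5.3} to reach the contradiction $m<m$, so that $u\in\mathcal{M}$ attains $m$; you instead re-project onto $\mathcal{M}$ along the dilation fibre and use the fibre-maximality $I(Q(t_u,u_n))\le h_{u_n}(1)=I(u_n)$ together with term-by-term lower semicontinuity of $w\mapsto I(Q(t_u,w))$ (weak lower semicontinuity of the kinetic term, Fatou for the positive potential term, Lemma \ref{lem 2.2}(iv) for $N_1$, strong $L^s$-convergence for $N_2$, $|\cdot|_2^4$ and $|\cdot|_p^p$), attaining $m$ at $Q(t_u,u)$. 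Both chains close; note only that $Q(t,\cdot)$ maps $E$ into $E$ thanks to \eqref{eq 1.10}, as you observe, a fact the paper also needs (implicitly, in Lemma \ref{lem 5.3}). Your Step 3 is the paper's part (iii).

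Step 2, however, has a genuine gap. You apply the implicit function theorem to $\psi(s,t)=J(Q(t,\bar u+sv))$, asserting $\partial_t\psi(0,1)=h_{\bar u}''(1)\neq 0$. First, this derivative need not exist: $J(Q(t,\bar u))$ contains the term $t^{2}\int_{\R^2}\mathcal{V}(x/t)\,\bar u^{2}\,dx$, and differentiating it in $t$ requires differentiating $\mathcal{V}=V-\frac{1}{2}(\nabla V,x)$ along rays, i.e. second derivatives of $V$, whereas $(V_2)$ assumes only $V\in C^1(\R^2,\R)$; thus $h_{\bar u}$ is in general merely $C^1$ and $h_{\bar u}''(1)$ is undefined. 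Second, even for smooth $V$ the nondegeneracy is unproved: Lemmas \ref{lem 5.2} and \ref{lem 5.3} yield a strict sign change of $h_{\bar u}'$ at $t=1$ (equivalently, $t\mapsto h_{\bar u}'(t)/t^{3}$ is strictly decreasing), but a strictly decreasing $C^1$ function may have vanishing derivative at its zero, so one can only conclude $h_{\bar u}''(1)\le 0$, not $h_{\bar u}''(1)<0$. This is precisely why the scheme of Proposition \ref{prop 3.4} --- where the nondegeneracy $\frac{\partial \varphi}{\partial t}(0,1)=-2\|u\|_V^{2}+(4-p)b|u|_p^{p}<0$ is explicit --- does not transplant to $\mathcal{M}$, and why the paper's part (ii) is derivative-free: assuming $I'(\bar u)v<0$, it fixes $\varepsilon>0$ with the uniform decrease property $I(\bar u+w_1+\tau(v+w_2))\le I(\bar u+w_1)-\varepsilon\tau$ for $\|w_i\|_E<\varepsilon$ and $\tau\in(0,\varepsilon)$, uses only the \emph{continuity} of $u\mapsto t_u$ (Lemma \ref{lem 5.3}) to choose $\tau$ with $\|Q(t^\tau,\bar u)-\bar u\|_E<\varepsilon$ and $\|Q(t^\tau,v)-v\|_E<\varepsilon$ where $t^\tau:=t_{\bar u+\tau v}$, and then $Q(t^\tau,\bar u+\tau v)\in\mathcal{M}$ satisfies $I(Q(t^\tau,\bar u+\tau v))<m$, a contradiction. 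Replacing your implicit-function step by this (or some other derivative-free) device repairs the proof; as written, the criticality of minimizers is not established.
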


\begin{proof}
In the following, we divide the proof into three parts.

(i) We first show that $m$ can be attained. Let $\{u_n\} \subset \mathcal{M}$
be a minimizing sequence for $I$, i.e., $I(u_n) \rightarrow m$ as $n \rightarrow \infty$.
From Lemma \ref{lem 5.6}, we therefore deduce that $\{u_n\}$ is bounded in $H_V$.
Then, by Lemma \ref{lem 2.1} there exists $u \in H_V$ such that, up to a subsequence,
\begin{equation}\label{eq 5.13}
  u_n \rightharpoonup u \quad \text{in} \ H_V, \quad
  u_n \rightarrow u \quad \text{in}\ L^s(\R^{2})\ \text{for all}\ s \geq 2, \quad
  u_n(x)\rightarrow u(x) \quad \text{a.e. in}\ \R^2.
\end{equation}
Now we claim that $u \neq 0$. Suppose, on the contrary, that $u=0$. Then we have
$u_n \to 0$ in $L^s(\R^{2})$ for all $s \geq 2$, so that $N_2(u_n) \to 0$ as $n \to \infty$
in view of \eqref{eq 2.2}. Since $\{u_n\} \subset \mathcal{M}$,
we further obtain from Lemma \ref{lem 5.1} that
\begin{equation*}
  |\nabla u_n|_2 \rightarrow 0, \quad N_1(u_n) \rightarrow 0, \quad
  \int_{\R^2} V(x)u_n^2\,dx \rightarrow 0 \quad \text{and} \quad
  \int_{\R^2} (\nabla V(x), x)u_n^2\,dx \rightarrow 0,
\end{equation*}
so that
\begin{equation*}
  I(u_n) = \frac{1}{2}\|u_n\|_V^2 + \frac{1}{4} \bigl[N_1(u_n) - N_2(u_n)\bigr]
  - \frac{b}{p}|u_n|^p_p \to 0 \qquad \text{as $n\rightarrow \infty$.}
\end{equation*}
Consequently, we have $m =0$, which contradicts Lemma \ref{lem 5.7}. Thus the claim follows.

Since $\{u_n\} \subset \mathcal{M}$, it follows that
\begin{equation*}
  B_1\left(u_n^2, u_n^2\right) = N_1(u_n) =  N_2(u_n) + \frac{2b(p-1)}{p}|u_n|_p^p + \frac{1}{8 \pi} |u_n|_2^4
  - 2|\nabla u_n|_2^2 - \int_{\R^2} \mathcal{V}(x)u_n^2\,dx,
\end{equation*}
which implies that $\sup_{n \in \mathbb{N}} B_1\left(u_n^2, u_n^2\right) < \infty$ due to Lemma \ref{lem 5.6}.
Hence, $|u_n|_\ast$ remains bounded in $n$ according to Lemma \ref{lem 2.2}, and so $\{u_n\}$
is bounded in $E$.  Then, passing to a subsequence if necessary, we may assume that $u_{n} \rightharpoonup u$
in $E$, so that $u \in E$. Applying the weak lower semicontinuity of the norm,
we derive from \eqref{eq 5.13} and Lemma \ref{lem 2.2}(iv) that
\begin{equation}\label{eq 5.14}
  I(u) \leq \liminf_{n \rightarrow \infty} I(u_n) = m,
\end{equation}
and
\begin{equation}\label{eq 5.15}
  J(u) \leq 0.
\end{equation}
If $J(u) = 0$, then $u \in \mathcal{M}$ and thus, \eqref{eq 5.14} indicates that $m$ can be attained at $u$.
Since \eqref{eq 5.15} holds, we only need to treat the case where
\begin{equation}\label{eq 5.16}
  J(u) < 0.
\end{equation}
We now prove that if \eqref{eq 5.16} occurs, it yields a contradiction. Indeed, Lemmas \ref{lem 5.3} and \ref{lem 5.5}
imply that there exists a unique $t_u \in (0,1)$ such that $Q(t_u, u) \in \mathcal{M}$ and
\begin{equation*}
  I\left(Q(t_u, u)\right) < I(u) - \frac{1}{4} J(u),
\end{equation*}
and therefore
\begin{align*}
  m&\leq I\left(Q(t_u, u)\right) < I(u) - \frac{1}{4} J(u) \\
   &=\frac{1}{8}\int_{\R^2} \bigl[2V(x) + (\nabla V(x),x)\bigr]u^2\,dx
    +\frac{1}{32 \pi}|u|_2^4 + \frac{b(p-3)}{2p}|u|^{p}_{p}\\
   &\leq \lim_{n \rightarrow \infty}\left(\frac{1}{8}\int_{\R^2} \bigl[2V(x) + (\nabla V(x),x)\bigr]u_n^2\,dx
    +\frac{1}{32 \pi}|u_n|_2^4 + \frac{b(p-3)}{2p}|u_n|^{p}_{p}\right)\\
   &=\lim_{n\rightarrow \infty} I(u_n) = m.
\end{align*}
This is impossible, and part (i) is thus proved.

(ii) We next show that every minimizer of $m$ is a critical point of $I$.
Let $u \in \mathcal M$ be an arbitrary minimizer for $I$ on $\mathcal M$.
To show that $u$ is a critical point of $I$, we argue by contradiction
and assume that there exists $v \in E$ such that $I'(u)v < 0$.
Since $I$ is a $C^1$-functional on $E$, we can fix $\eps>0$ with
the following property: \medskip

{\em For every $w_i \in E$ with $\|w_i\|_{E}<\eps$, $i=1,2$, and every $\tau \in (0,\eps)$, we have}
\begin{equation*}
  I(u + w_1 + \tau (v + w_2)) \le I(u +w_1)-\eps \tau.
\end{equation*}
Using Lemma \ref{lem 5.3} and the fact that $t_u=1$, we may choose $\tau \in (0,\eps)$
sufficiently small such that for $t^\tau:= t_{u+\tau v}$,
\begin{equation*}
  \|Q(t^\tau,u)- u\|_E < \eps \qquad \text{and}\qquad  \|Q(t^\tau,v)- v\|_E < \eps.
\end{equation*}
Setting $w_1:= Q(t^\tau,u)-u$ and $w_2:= Q(t^\tau,v)-v$, we then obtain $\|w_i\|_{E}<\eps$
for $i=1,2$, so that, by the property above,
\begin{align*}
  I\left(Q(t^\tau,u + \tau v)\right) &= I\left(Q(t^\tau,u) + \tau Q(t^\tau, v)\right)
   = I(u + w_1 + \tau (v+w_2)) \\
  &\le I(u+w_1) - \eps \tau < I(u+w_1) = I(Q(t^\tau,u)) \le I(u)= m.
\end{align*}
Since $Q(t^\tau,u + \tau v) \in \mathcal{M}$, this contradicts the definition of $m$.
So, part (ii) holds.

(iii) We finally show that every minimizer of $m$ does not change sign in $\R^2$.
The proof of this part is similar to that of Lemma \ref{lem 3.5}, so we omit it here.
\end{proof}

The {\em proof of Theorem \ref{th 1.3}} is now completed by combining Corollary \ref{coro 5.4}
and Proposition \ref{prop 5.8}.

\vskip 0.6 true cm

\end{document}